\journal{}
\newtheorem{theorem}{Theorem}[section]
\newtheorem{lemma}{Lemma}[section]
\newtheorem{assumption}{Assumption}[section]
\newtheorem{remark}{Remark}[section]
\newproof{proof}{Proof}
\numberwithin{equation}{section}
\definecolor{red}{rgb}{1,0,0} 
\definecolor{blue}{rgb}{0,0,1} 
\begin{document}
	
	\begin{frontmatter}
		
		\title{Error estimates of a bi-fidelity method for a multi-phase Navier-Stokes-Vlasov-Fokker-Planck system with random inputs}

		\author[mymainaddress,mysecondaryaddress]{Yiwen Lin 
		}
		\ead{linyiwen@sjtu.edu.cn}
		
		\author[mymainaddress,mysecondaryaddress,mythirdaddress]{Shi Jin}
		\ead{shijin-m@sjtu.edu.cn}

		
		\address[mymainaddress]{School of Mathematical Sciences, Shanghai Jiao Tong University, Shanghai 200240, China}
		\address[mysecondaryaddress]{Institute of Natural Sciences, Shanghai Jiao Tong University, Shanghai 200240, China}
		\address[mythirdaddress]{Ministry of Education, Key Laboratory in Scientific and Engineering Computing, Shanghai Jiao Tong University, Shanghai 200240, China}

		\begin{abstract}
			
			Uniform error estimates of a bi-fidelity method for a kinetic-fluid coupled model with random initial inputs in the fine particle regime are proved in this paper. Such a model is a system coupling the incompressible Navier–Stokes equations to the Vlasov–Fokker–Planck equations for a mixture of the flows with \textit{distinct} particle sizes. 
			The main analytic tool is the hypocoercivity analysis for the multi-phase Navier-Stokes-Vlasov-Fokker-Planck system with uncertainties, considering solutions in a perturbative setting near the global equilibrium. This allows us to obtain the error estimates in both kinetic and hydrodynamic regimes.

		\end{abstract}
		
		\begin{keyword}
			kinetic-fluid model  \sep error analysis \sep uncertainty quantification \sep bi-fidelity method
			\MSC[2020]{35Q84, 65M70}
		\end{keyword}
		
	\end{frontmatter}
	
	
	\section{Introduction}

	The study of  kinetic-fluid models for a mixture of flows, where particles represent the dispersed phase evolving in a dense fluid, is motivated by applications such as smoke or dust dispersion \cite{Friedlander1977}, biomedical spray modeling \cite{Baranger2005} and combustion theory \cite{Williams1985}. 
	For more details on the modeling of such multi-phase flows, see \cite{Prosperetti2007}.
	Mathematically, these problems are described by	partial differential equations, where the evolution of the particle distribution function is driven by kinetic equations with a combination of particle transport, Stokes drag force exerted by the surrounding fluid and Brownian motion, while the evolution of the fluid is driven by Euler or Navier-Stokes equations.
	This can lead to the Navier-Stokes-Vlasov-Fokker-Planck system, first proposed in \cite{Goudon2004a, Goudon2004b}. 
	
	So far most of the references do not take into account the impact of particle size variations, as they assume that all particles are of the same size. However, this paper focuses on developing a kinetic-fluid model for a mixture of particles with {\it {distinct}} sizes. Such multi-size particle systems have various applications in engineering, particularly in complex meteorological simulations of large aircraft icing processes. It is essential to consider the distribution of droplets in the air, as the influence of larger droplets in the distribution cannot be overlooked \cite{Cober2006,Hauf2006,Potapczuk2019}.
	Notice that simulating multi-size particles through experimental means can be very challenging, yet, the behavior of fluid-particle systems with particles of distinct sizes and uncertainties can be studied mathematically. 
	It is known that solving deterministic kinetic-fluid coupled equations is time expensive due to their high-dimensional nature in the phase space. When considering random uncertainties described by kinetic equations with random parameters in initial or boundary data, and the modeling of drag force and particle diffusion \cite{Xiu2009, Xiu2010, Gunzburger2014, JinLin2022AP}, the computational dimension becomes even higher, making it computationally challenging. Indeed, it is significant to quantify these uncertainties because such quantification helps us understand how uncertainties affect the solution, leading to reliable predictions.
	While the standard stochastic collocation method, commonly employed for high-dimensional parametric PDEs \cite{Nobile2008, Xiu2005}, can be used, it is not ideal to repeatedly solve the kinetic-fluid equation numerically, given the complex coupling and nonlinearity involved in multi-phase kinetic-fluid systems.

	Often less complex low-fidelity models are used to approximate systems with simplified physics or on a coarser physical mesh to reduce computational costs. Although they may sacrifice accuracy, the low-fidelity models are designed to capture essential features of the system and provide reliable predictions. In fluid dynamics, hydrodynamic limit equations can be used as low-fidelity models for kinetic equations.
	In a recent study \cite{LiuZhu2020}, Liu and Zhu used a bi-fidelity method from \cite{Narayan2014,Zhu2014} to efficiently compute high-fidelity solutions of the Boltzmann equation with multidimensional random parameters and multiple scales. This approach can effectively reduce the computational cost while maintaining reasonable accuracy.
	Gamba, Jin, and Liu \cite{Gamba2021} have conducted uniform error estimates of the bi-fidelity method, focusing on important examples such as the Boltzmann and linear transport equations. However, for kinetic-fluid coupled equations, there is currently limited research on bi-fidelity methods.

	We will consider our problems with uncertainty, characterized by random inputs in the initial data.  To model the uncertainty here, let the velocity of the fluid and the distribution functions of particles with $N$ different sizes depend on the random variable $z$ (i.e., $u = u(t,x, z)$ and $F_i = F_i(t,x,v, z), i=1,2,\ldots,N$), which lives in the random space $\mathbb{Z}$ with probability distribution $\pi(z) dz$. The uncertainty from the initial data is then described by letting the initial data $u_0$ and $\{F_{i,0}\}_{i=1}^N$ depend on $x$ and $z$. 
	
	Let $\varepsilon$ be the varying Stokes number, which describes the ratio of the Stokes settling time over a certain time unit of observation. The goal of this work is to provide a uniform-in-$\varepsilon$  error estimate of the bi-fidelity method for solving multi-phase kinetic-fluid equations with high-dimensional uncertainties. 
	The error estimate is obtained by splitting the error between the high- and bi-fidelity solutions into three parts: the error between the high- and low- fidelity solutions, the projection for the low-fidelity solutions and the remainder. 
	Our analysis is based not only on the hypocoercivity analysis for multi-phase kinetic-fluid equations with uncertainties \cite{JinLin2022} which shows that for near-equilibrium initial data, the solution always  preserves the  regularity of the initial data and is insensitive to random perturbations of the initial data for large time, where the solution is close to the global equilibrium, 	
	but also on the best-$N$ approximation theory and \cite{Cohen2015} which provides an upper bound for the Kolmogorov $N$-width. 
	 It is known that the multi-fidelity estimate error bounds are rarely sharp \cite{Narayan2014, LiuZhu2020} if the general high- and low-fidelity solutions are treated as vector data instead of including good properties on specific models. In this paper, we adopt the theoretical framework developed in \cite{Gamba2021}, use the good properties of the limiting system and give a sharper error estimate than \cite{Narayan2014, LiuZhu2020}. Boltzmann equation and  linear transport equations are considered in [6] while kinetic-fluid coupled system with distinct particle sizes which is a more complex model will be studied in this paper.

	One of the major difficulties in the analysis of the bi-fidelity method is that the hydrodynamic limit is not valid in the kinetic regime, yet, in the perturbative setting, one can still establish an error estimate between the kinetic-fluid equation and its hydrodynamic limit, as their velocity moments are close. In contrast to the scaling used in previous studies such as \cite{Goudon2004a,Goudon2004b,ShuJin2018,JinLin2022}, where  the scaling coefficient of the perturbative part are not  explicitly tracked, we need to introduce and track a different perturbative parameter in our analysis to obtain the error in the kinetic regime. This enables us to obtain uniform error estimates for the bi-fidelity method from both the kinetic and fluid regimes in the perturbative setting, even when the hydrodynamic limit (the low-fidelity) is not a good approximation of the kinetic-fluid equation.
	
	Our analysis is based on the property that the moments of the kinetic-fluid equations and its hydrodynamic limit are close in the perturbative setting. This implies that the bi-fidelity model does not necessarily have to be the hydrodynamic limit equation. Instead, alternative moment models in velocity, such as moment closure models, that capture the moments as accurate as the kinetic-fluid systems in the perturbative setting, may be employed as the low-fidelity model. In our analysis, we use multi-phase kinetic-fluid systems with distinct particle sizes as illustrative examples together with necessary remarks on two-phase kinetic-fluid systems. 
	
	This paper is organized as follows. Section \ref{sec:bifidelity} gives an introduction to the bi-fidelity stochastic collocation method for solving PDEs with random parameters. In Section \ref{sec:model}, we present a detailed description of the multi-phase kinetic-fluid system of interest, including the hydrodynamic limit system and the system near the global equilibrium.  Section \ref{sec:estimate} is dedicated to establishing uniform-in-$\varepsilon$ error estimates between the high-fidelity and bi-fidelity solutions, by choosing hydrodynamic limit equations of the kinetic-fluid (high-fidelity) equations as their low-fidelity models and the proofs are given in Section \ref{sec:proof}.	In Section \ref{sec:other}, other possible choices of low-fidelity models are discussed, in particular, choosing the same kinetic-fluid equation but solving by using a coarser physical mesh. The paper is concluded in Section \ref{sec:conclusion}.

	\section{A bi-fidelity stochastic collocation method}
	\label{sec:bifidelity}
	
	In this section, we review the efficient bi-fidelity stochastic collocation method studied in \cite{Narayan2014,Zhu2014}. Denote by $\textbf{u}^H, \textbf{u}^L$ the high- and low-fidelity solutions respectively, while $\textbf{u}^B$ is the bi-fidelity solution, which is an approximation of $\textbf{u}^H$. The bi-fidelity algorithm consists of two stages. In the offline stage, one uses the inexpensive low fidelity model to explore the random parameter space. Then one selects the most important parameter points $\gamma_K$ by using the greedy procedure, similar to the popular reduced-basis method for solving parameterized partial differential equations \cite{Peterson1989}.
	
	During the online stage, the bi-fidelity approximation is realized by applying exactly the same approximation rule learned from the low-fidelity model for any given $z$. For any given sample point $z \in I_z$, we project the low-fidelity solution $\textbf{u}^L(z)$ onto the low-fidelity approximation space $U^L\left(\gamma_K\right)$ :
	\begin{eqnarray}\label{defuL}
		\textbf{u}^L(z) \approx \mathcal{P}_{U^L(\gamma_K)}\left[\textbf{u}^L(z)\right]=\sum_{k=1}^K c_k(z) \textbf{u}^L\left(z_k\right),
	\end{eqnarray}
	where $\mathcal{P}_{U^L\left(\gamma_K\right)}$ is the projection operator onto the space $U^L\left(\gamma_K\right)$ with corresponding projection coefficients $\left\{c_k\right\}$, which are computed using the Galerkin approach:
	\begin{eqnarray}\label{GLsystem}
		\mathbf{G}^L \mathbf{c}=\mathbf{f}, \quad \mathbf{f}=\left(f_k\right)_{1 \leq k \leq K}, \quad f_k=\left\langle \textbf{u}^L(z), \textbf{u}^L\left(z_k\right)\right\rangle^L .
	\end{eqnarray}
	Here $\mathbf{G}^L$ is the Gramian matrix of $\textbf{u}^L\left(\gamma_K\right)$,
	\begin{eqnarray}\label{Gramianmatrix}
		\left(\mathbf{G}^L\right)_{i j}=\left\langle \textbf{u}^L\left(z_i\right), \textbf{u}^L\left(z_j\right)\right\rangle^L, \quad 1 \leq i, j \leq K,
	\end{eqnarray}
	where $\langle\cdot, \cdot\rangle^L$ is the inner product associated with the approximation space $U^L\left(\gamma_K\right)$. These low-fidelity coefficients $\left\{c_k\right\}$ act as surrogates of the corresponding high-fidelity coefficients of $\textbf{u}^H(z)$. The bi-fidelity approximation of $\textbf{u}^H$ can then be constructed as follows:
	\begin{eqnarray}\label{uB}
		\textbf{u}^B(z)=\sum_{k=1}^K c_k(z) \textbf{u}^H\left(z_k\right) .
	\end{eqnarray}
	The key idea of this method is to first represent the solution $\textbf{u}^L(z)$ as coordinates in the basis $U^L(\gamma)$, which is assumed to be a collection of linearly independent solutions, followed by using exactly the same coordinate coefficients $c_n$ to reconstruct the bi-fidelity solution \eqref{uB}. In practice, the number of low-fidelity basis is typically small, and the cost of computing the projection coefficients $c$ by solving the linear system \eqref{GLsystem} is negligible. Hence the dominant cost of the online step is just one low-fidelity simulation run. Notably, the low-fidelity coefficients can be a good approximation of the corresponding high-fidelity coefficients for a given sample $z$ if the low-fidelity model can mimic the variations of the high-fidelity model in the parameter space.

	To obtain the error estimate of $\textbf{u}^H-\textbf{u}^B$ in general, we split the total error by inserting the information of $\textbf{u}^L$. Specifically, we use the following approach:
	\begin{eqnarray}\label{eqsThreeParts}
		\begin{aligned}
			& \textbf{u}^H(z)-\textbf{u}^B(z)\\
			= & \textbf{u}^H(z)-\sum_{k=1}^K c_k(z) \textbf{u}^H\left(z_k\right) \\
			= & \textbf{u}^H(z)-\textbf{u}^L(z)+\left(\textbf{u}^L(z)-\sum_{k=1}^K c_k(z) \textbf{u}^L\left(z_k\right)\right) 
			+\sum_{k=1}^K c_k(z)\left(\textbf{u}^L\left(z_k\right)-\textbf{u}^H\left(z_k\right)\right),
		\end{aligned}
	\end{eqnarray}
	where the second term denotes the projection error of the greedy algorithm, and it remains to estimate $\textbf{u}^H(z)-\textbf{u}^L(z)$ in appropriate norms.

	\section{Multi-phase flow model in the fine particle regime}
	\label{sec:model}
	
	In this paper, our main focus is on models that describe a large number of particles, {\it with distinct but fixed sizes}, interacting with fluids. External potentials such as gravity, electrostatic force, and centrifugal force, as well as coagulation-fragmentation events that can lead to changes in particle sizes are ignored in our study.	Simiar to fluid dynamics, the dense fluid phase is modeled as a liquid or dense gas characterized by macroscopic quantities like mass density, velocity and temperature, and is governed by the incompressible Navier-Stokes equations, which depend on both time and space variables. On the other hand, particles such as droplets and bubbles dispersed in the fluid are modeled using distribution functions in phase space and are described by the Vlasov-Fokker-Planck kinetic equations that depend on time, space and microscopic particle velocity. Note that the unknowns for different phases may not depend on the same set of variables, and particles and fluid systems are coupled through nonlinear forcing terms. Such coupling and nonlinearities pose new  difficulties in both mathematical analysis and numerical computations compared to uncoupled problems.  
	
	In the fine particle regime, the suitably scaled PDE systems for a mixture of flows with uncertainties are given by:
	\begin{equation}\label{ModelEquation}
		\left\{\begin{aligned}
			&\begin{aligned}
				(F_i)_{t}+v \cdot \nabla_{x} (F_i)&=\dfrac{1}{\varepsilon}\dfrac{1}{i^{2/3}}\operatorname{div}_{v}\left((v-u) F_i+\dfrac{\bar{\theta}}{i}\nabla_{v} F_i\right), \\
				&\hspace{6em} (t, x, v, z) \in \mathbb{R}^{+} \times \mathbb{T}^{3} \times \mathbb{R}^{3} \times \mathbb{Z}^{d_z}, i=1,2,...,N, 
			\end{aligned}\\
			&u_{t}+(u \cdot \nabla_{x}) u+\nabla_{x} p-\Delta_{x} u=\dfrac{\kappa}{\varepsilon} \sum_{i=1}^N\int_{\mathbb{R}^{3}}(v-u) F_i i^{1/3} \mbox{d} v, \quad(t, x,z) \in \mathbb{R}^{+} \times \mathbb{T}^{3}, \\
			&\nabla_{x} \cdot u=0,
		\end{aligned}\right.
	\end{equation}	 
	with the initial condition that depends on $z$:
	$$\left.u\right|_{t=0}=u_{0}, \quad \nabla_{x} \cdot u_{0}=0,\left.\quad F_i\right|_{t=0}=F_{i,0},$$
	where $u_{0}=u(0,x,z)$, $F_{i,0}=F_i(0,x,v,z), i=1,2,\ldots,N$.
	Here $t\geq 0$ is  time, $x\in \mathbb{T}^{3}$ is the space variable, and $v\in \mathbb{R}^{3}$ is the particle velocity.
	For simplicity the  periodic boundary condition in the space domain is assumed. The fluid is described by its velocity field $u=u(t,x,z)\in \mathbb{R}^{3}$ and its pressure $p(t,x,z)$. The particles are described by their distribution function $F_i = F_i(t,x,v,z), i=1,2,\ldots,N$ in phase space. $N$ is the number of sizes of particles. $\bar{\theta}$ is the reference temperature.
	$\varepsilon$ is the Stokes number, which satisfies, without loss of generality, $0<\varepsilon\leq 1$. $\varepsilon=O(1)$ corresponds to the kinetic regime, while $\varepsilon\rightarrow 0$ corresponds to the fluid regime. $\kappa>0$ is the coupling constant, which equals the ratio between the particle density and fluid density.

	\subsection{The hydrodynamic limit}
	
	For the multi-size particle-fluid systems \eqref{ModelEquation}, we associate to $F_i(t,x,v,z), i=1,2,\ldots,N$ the following macroscopic quantities:
	$$
	\begin{gathered}
		n_{i}(t, x,z)=\int_{\mathbb{R}^{3}} F_i(t, x, v,z) \mathrm{d} v, \quad \rho_i(t,x) = i n_i(t,x,z), 
		\quad J_{i}(t, x,z)= i \int_{\mathbb{R}^{3}} v F_i(t, x, v,z) \mathrm{d} v, \\
		\mathbb{P}_{i}(t, x,z)=i \int_{\mathbb{R}^{3}} v \otimes v F_i(t, x, v,z) \mathrm{d} v ,
	\end{gathered}
	$$
	where $\rho_i, J_i$ and $\mathbb{P}_{i}$ are the mass, momentum and stress tensors, respectively, of particles of size $i$.
	Integrating the first equation in \eqref{ModelEquation} with respect to $i \mathrm{~d} v$ and $i v \mathrm{~d} v$ respectively, one obtains
	$$
	i \partial_{t} n_{i}+\nabla_{x} \cdot J_{i}=0,
	$$
	and
	$$\partial_{t} J_{i}+\operatorname{Div}_{x} \mathbb{P}_{i}=-\frac{1}{\varepsilon} \frac{1}{i^{2 / 3}}\left(J_{i}-i n_{i} u\right).$$
	Note that the  system \eqref{ModelEquation} conserves the total momentum since
	\begin{equation}\label{eq:kappaJ}
		\partial_{t}\left(u+ \kappa \sum_{i=1}^{N} J_{i}\right)+\operatorname{Div}_{x}\left(u \otimes u+\kappa \sum_{i=1}^{N} \mathbb{P}_{i}\right)+ \nabla_{x} p-\Delta_{x} u=0.
	\end{equation}
	Accordingly, for $\varepsilon\ll 1$,  $J_i$ and $\mathbb{P}_i$ are approximated by the moments of the Maxwellian, i.e., 
	$$J_i \simeq i n_i u, \quad \mathbb{P}_i \simeq i n_i u \otimes u+ i n_i  \mathbb{I}.$$
	Inserting this ansatz into \eqref{eq:kappaJ}, one arrives at
	\begin{equation}
		\partial_{t}\left(1\left(1+ \kappa \sum_{i=1}^{N}  i n_i\right)u\right)+\operatorname{Div}_{x}\left(\left(1+ \kappa \sum_{i=1}^{N}  i n_i\right)u \otimes u\right)+ \nabla_{x} \left(p+ \kappa \sum_{i=1}^{N}  i n_i\right)-\Delta_{x} u=0. 
	\end{equation}
	
	Thus, as $\varepsilon\rightarrow 0$,  the system \eqref{ModelEquation} formally  has a hydrodynamic limit
	\begin{equation}\label{eq:iniu}
		\left\{\begin{aligned}
			& \partial_{t} n_i +\nabla_{x} \cdot(n_i u)=0, \\
			&\partial_{t}\left(\left(1+ \kappa \sum_{i=1}^{N}  i n_i\right)u\right)+\operatorname{Div}_{x}\left(\left(1+ \kappa \sum_{i=1}^{N}  i n_i\right)u \otimes u\right)+ \nabla_{x} \left(p+ \kappa \sum_{i=1}^{N}  i n_i\right)-\Delta_{x} u=0, \\
			&\nabla_{x} \cdot u=0.
		\end{aligned}\right.
	\end{equation}
	Denote $\rho = \displaystyle\sum_{i=1}^{N}  i n_i$.
	Then system \eqref{ModelEquation} becomes
	\begin{equation}\label{eq:nu}
		\left\{\begin{aligned}
			& \partial_{t} \rho +\nabla_{x} \cdot(\rho u)=0, \\
			&\partial_{t}\left(\left(1+ \kappa \rho\right)u\right)+\operatorname{Div}_{x}\left(\left(1+ \kappa \rho\right)u \otimes u\right)+ \nabla_{x} \left(p+ \kappa \rho \right)-\Delta_{x} u=0, \\
			&\nabla_{x} \cdot u=0,
		\end{aligned}\right.
	\end{equation}
	which is the incompressible Navier-Stokes system for the composite and inhomogeneous density $(1+\kappa \rho)$.

	\subsection{Solutions close to the  equilibrium}
	
	Consider the local normalized Maxwellian
	$$\mu_i(v)=\frac{1}{(\frac{2 \pi \bar{\theta}}{i})^{3 / 2}|\mathbb{T}|^{3}} e^{-\frac{i v^{2}}{2\bar{\theta}} },$$
	and look at solutions of \eqref{ModelEquation} in the form
	\begin{equation}\label{eq:F_i0}
		F_i = \mu_i + \delta \sqrt{\mu_i} f_i.
	\end{equation}
	Plugging \eqref{eq:F_i0} into \eqref{ModelEquation}, one obtains the following system for the perturbation $(u,\{f_i\}_{i=1}^N)$:
	\begin{equation}\label{eq:uf}
		\left\{\begin{aligned}
			&(f_i)_{t}+v \cdot \nabla_{x} (f_i)+\frac{1}{i^{2/3}\varepsilon}u \cdot\left(\nabla_{v}- \frac{ i v}{2 \bar{\theta}} \right)f_i-\frac{i^{1/3}}{\bar{\theta}\varepsilon\delta}u \cdot v \sqrt{\mu_i}=\frac{1}{i^{2/3}\varepsilon}\left(-\frac{i}{\bar{\theta}}\frac{|v|^{2}}{4}+\frac{3}{2} +\frac{\bar{\theta}}{i}\Delta_{v} \right)f_i, \\
			&u_{t}+u \cdot \nabla_{x} u+\nabla_{x} p-\Delta_{x} u+\frac{\kappa}{\varepsilon}u\sum_{i=1}^{N} i^{1/3} +\frac{\kappa\delta}{\varepsilon} u \sum_{i=1}^{N}i^{1/3}  \int_{\mathbb{R}^{3}} \sqrt{\mu_i} f_i  \mathrm{d} v-\frac{\kappa\delta}{\varepsilon}\sum_{i=1}^{N} i^{1/3}  \int_{\mathbb{R}^{3}} v \sqrt{\mu_i} f_i \mathrm{d} v=0, \\
			&\nabla_{x} \cdot u=0,
		\end{aligned}	
		\right.
	\end{equation}
	with the initial data
	\begin{equation}
		\left.u\right|_{t=0}=u_{0},\left.\quad f_i\right|_{t=0}=f_{i,0},
	\end{equation}
	\begin{equation}\label{initialu0}
		\int_{\mathbb{T}^{3}} u_{0} \mathrm{d} x+\sum_{i=1}^{N}\int_{\mathbb{T}^{3}} \int_{\mathbb{R}^{3}}i\delta v \sqrt{\mu_i} f_{i,0} \,\mathrm{d} v \mathrm{d} x=0, \quad \quad \nabla_x \cdot u_{0}=0,
	\end{equation}
	and
	\begin{equation}\label{initialu0_2}
		\int_{\mathbb{T}^{3}}\int_{\mathbb{R}^{3}} \sqrt{\mu_i} f_{i,0} \,\mathrm{d} v \mathrm{d} x=0.
	\end{equation}
	
	Define the mean fluid velocity
	$$
	\bar{u}(t, z) \stackrel{\text { def }}{=} \frac{1}{|\mathbb{T}|^{3}} \int_{\mathbb{T}^{3}} u(t, x, z) \mathrm{d} x .
	$$
	Averaging the second equation in \eqref{eq:uf} yields 
	\begin{equation}\label{eq:ubar}
		\bar{u}_{t} +\frac{\kappa}{\varepsilon} \sum_{i=1}^{N} i^{1/3} \bar{u}+\frac{\kappa}{\varepsilon}\frac{1}{|\mathbb{T}|^{3}}\sum_{i=1}^{N} \int_{\mathbb{T}^{3}}  \int_{\mathbb{R}^{3}} \delta \sqrt{\mu_i} u f_i i^{1/3}\mathrm{d} v \mathrm{d} x = \frac{\kappa}{\varepsilon}\frac{1}{|\mathbb{T}|^{3}}\sum_{i=1}^{N} i^{1/3}  \int_{{\mathbb{T}}^{3}}\int_{\mathbb{R}^{3}} \delta v \sqrt{\mu_i} f_i \mathrm{d} v \mathrm{d} x .
	\end{equation}
	On the other hand, the momentum conservation 
	together with the condition \eqref{initialu0} implies
	\begin{equation}\label{eq:ubar2}
		-\frac{1}{|\mathbb{T}|^{3}}\sum_{i=1}^{N}\int_{{\mathbb{T}}^{3}}\int_{{\mathbb{R}}^{3}}i \delta v \sqrt{\mu_i} f_i \mathrm{d} v \mathrm{d} x=\frac{1}{|\mathbb{T}|^{3}}\int_{\mathbb{T}^{3}} u \mathrm{d} x=\bar{u} .
	\end{equation}

	\section{Error estimates for solutions close to the equilibrium}
	\label{sec:estimate}
	In this section, we conduct the error estimates of the bi-fidelity method for the multi-component Vlasov-Fokker-Planck-Navier-Stokes equations under different scalings. This analysis is based upon the framework established in \cite{Gamba2021}. In the kinetic-fluid coupled equations, there exist small parameters known as the Stokes number, defined as the ratio of the Stokes settling time over a certain time unit of observation. As the Stokes number $\varepsilon$ goes to $0$, the kinetic-fluid equations tend to their hydrodynamic limit, which governs a macroscopic behavior at a lower computational cost and can be chosen as an efficient low-fidelity model. However, when $\varepsilon=O(1)$, the macroscopic models may no longer be close to  the kinetic ones, making it challenging to obtain error estimates. Nonetheless, in the perturbative setting where solutions are close to global equilibrium, it is still possible to obtain error estimates using the hypocoercivity argument \cite{JinLin2022}.
	
	Let $\textbf{u}_i^H$ be the high-fidelity solution of $F_i$, which is defined by the macroscopic moments of density and momentum that are obtained from the distribution $F_i$ solved by the Vlasov-Fokker-Planck-Navier-Stokes equations \eqref{ModelEquation}:
	$$\mathbf{u}_{i}^H=\int_{\mathbb{R}^{3}}\left(\begin{array}{c}
		i \\
		iv 
	\end{array}\right) F_i(v) \mathrm{d} v:=\left(\begin{array}{c}
		\rho_i^H \\
		\rho_i^H u^H 
	\end{array}\right):=\left(\begin{array}{c}
		\mathbf{u}_{i,1}^H \\
		\mathbf{u}_{i,2}^H 
	\end{array}\right).  $$
	Then define the high-fidelity solution $u^H$ of the multi-phase kinetic-fluid system \eqref{ModelEquation} by the summation of $u_i^H$, i.e.,
	\begin{eqnarray}\label{def:uh}
		\mathbf{u}^H= \sum_{i=1}^N \mathbf{u}_{i}^H = \sum_{i=1}^N \int_{\mathbb{R}^{3}}\left(\begin{array}{c}
			i \\
			iv 
		\end{array}\right) F_i(v) \mathrm{d} v:=\left(\begin{array}{c}
			\sum_{i=1}^N \rho_i^H \\
			\sum_{i=1}^N \rho_i^H u^H 
		\end{array}\right):=\left(\begin{array}{c}
			\mathbf{u}_{1}^H \\
			\mathbf{u}_{2}^H 
		\end{array}\right).
	\end{eqnarray}
	
	In order to adopt the hypocoercivity theory and study the long-time behavior of the solution to the Vlasov-Fokker-Planck-Navier-Stokes equations \eqref{ModelEquation}, one has to consider the perturbative setting. 
	In this framework, let $\{f_i\}_{i=1}^N$ be the perturbed solution; consider the ansatz
	$$
	F_i = \mu_i + \delta \sqrt{\mu_i} f_i,
	$$
	where parameter $\delta$ is assumed sufficiently small and independent of $\varepsilon$. Denote the steady state by 
	$\mathbf{u}_i^{\text {st }}=(1,0)^T=\left(\mathbf{u}_{i,1}^{s t}, \mathbf{u}_{i,2}^{s t}\right)$, 
	then
	\begin{eqnarray}\label{uH_ust}
		\mathbf{u}_i^H=\mathbf{u}_i^{s t}+\delta \int_{\mathbb{R}^{3}}\left(\begin{array}{c}
			i \\
			iv 
		\end{array}\right) \sqrt{\mu_i} f_i(v) \mathrm{d} v.
	\end{eqnarray}
	
	We let the low-fidelity solution
	$$
	\mathbf{u}^L=\left(\begin{array}{c}
		\rho^L \\
		\rho^L u^L 
	\end{array}\right):=\left(\begin{array}{c}
		\mathbf{u}_1^L \\
		\mathbf{u}_2^L
	\end{array}\right), 
	$$
	obtained from the incompressible Navier-Stokes system \eqref{eq:nu}. This first-order $O(\varepsilon)$ approximation captures variations in the random space of macroscopic quantities in the Vlasov-Fokker-Planck-Navier-Stokes equations with a certain level of accuracy. Since the initial condition of the high-fidelity and low-fidelity models are required to be consistent,
	$$
	\mathbf{u}^L_0=\left(\begin{array}{c}
		\rho_0 \\
		\rho_0 u_0
	\end{array}\right), \quad \rho_0 = \sum_{i=1}^N i n_{i,0}, \  n_{i,0} = \int_{\mathbb{R}^{3}} f_{i,0} \mathrm{d} v.
	$$
	We then consider a linearization of the hydrodynamic limit equations around the state 
	$\left(n_{i,0}, u_0\right)=(1,0)$,
	$$
	n_i^L=1+\delta \tilde{n_i}, \quad u^L=\delta \tilde{u},
	$$
	where $\delta$ is the same quantity as the $\delta$ in \eqref{uH_ust},   since we want the initial data of $\textbf{u}^L$ and $\textbf{u}^H$ to be consistent. 
	One can derive the following equations:
	\begin{eqnarray}\label{equLtilde}
		\left\{\begin{array}{l}
			\partial_t \left(\sum_{i=1}^N i \tilde{n_i} \right)+ \left(\sum_{i=1}^N i\right) \nabla_x \cdot \tilde{u}=0, \\
			\left(1+\kappa \sum_{i=1}^N i \right)\partial_t \tilde{u}+\nabla_x(\tilde{p}+\kappa\sum_{i=1}^N i \tilde{n_i})=\Delta_x \tilde{u},\\
			\nabla_{x} \cdot \tilde{u}=0,
		\end{array}\right.
	\end{eqnarray}
	which is essentially an incompressible acoustic equation \cite{Goudon2004b}.

	\subsection{Notations}
	
	We first define the space and norms that will be used. The Hilbert space of the random variable is given by
	$$
	H\left(\mathbb{R}^d ; \pi \mathrm{d} z\right)=\left\{f \mid I_z \rightarrow \mathbb{R}, \int_{I_z} f^2(z) \pi(z) \mathrm{d} z<\infty\right\}
	$$
	and equipped with the inner product
	$$
	\langle f, g\rangle_\pi=\int_{I_z} f g \pi(z) \mathrm{d} z.
	$$
	
	We introduce the standard multivariate notation. Denote the countable set of "finitely supported" sequences of non-negative integers by
	$$
	\mathcal{F}:=\left\{{\nu}=\left({\nu}_1, {\nu}_2, \cdots\right): {\nu}_j \in \mathbb{N} \text {, and } {\nu}_j \neq 0 \text { for only a finite number of } j\right\} \text {, }
	$$
	with $|{\nu}|:=\sum_{j \geq 1}\left|{\nu}_j\right|$. For $\nu \in \mathcal{F}$ supported in $\{1, \cdots, J\}$, the partial derivative in $z$ is defined by
	$$
	\partial_z^{\nu} =\frac{\partial^{|{\nu}|} }{\partial^{{\nu}_1} z_1 \cdots \partial^{{\nu}_J} z_J} .
	$$
	The $z$-derivative of order $\gamma$ of a function $f$ is denoted by
	$
	f^{\gamma}=\partial_{z}^{\gamma} f.
	$

	For functions $u=u(x), f=f(x, v)$, define the Sobolev norm (with $x$-derivatives)
	$$
	\|u\|_{s}^{2}=\sum_{|\alpha| \leq s}\left\|\partial^{\alpha} u\right\|_{L_{x}^{2}}^{2}, \quad\|f\|_{s}^{2}=\sum_{|\alpha| \leq s}\left\|\partial^{\alpha} f\right\|_{L_{x, v}^{2}}^{2}.
	$$
	In particular, denote by $\|u\|_{0}$  the $L_{x}^{2}$ norm of $u$. 
	
	For functions $u=u(x, z), f=f(x, v, z)$, define the sum of the Sobolev norms
	\begin{equation}\label{Sobolevnorm}
		|u|_{s, r}^{2}=\sum_{|\gamma| \leq r}\left\|u^{\gamma}(\cdot, z)\right\|_{s}^{2}, \quad|f|_{s, r}^{2}=\sum_{|\gamma| \leq r}\left\|f^{\gamma}(\cdot, \cdot, z)\right\|_{s}^{2},	
	\end{equation}
	where $|u|_{s, r}$ and $|f|_{s, r}$ are functions of $z$. Then define the expected value of the total Sobolev norm by
	$$
	\|u\|_{s, r}^{2}=\int|u|_{s, r}^{2} \pi(z) \mathrm{d} z, \quad\|f\|_{s, r}^{2}=\int|f|_{s, r}^{2} \pi(z) \mathrm{d} z,
	$$
	and the sup norm in $z$:
	$$
	\|u\|_{s,r,\infty}=\sup _{z \in I_z}|u|_{s,r} .
	$$
	
	For function $\bar{u}=\bar{u}(z)$, also define the sum of derivatives, the Sobolev norm  and the sup norm by
	$$
	|\bar{u}|_{r}^{2}=\sum_{|\gamma| \leq r}\left|\bar{u}^{\gamma}\right|^{2}, \quad\|\bar{u}\|_{r}^{2}=\int|\bar{u}|_{r}^{2} \pi(z) \mathrm{d} z, \quad\|\bar{u}\|_{r,\infty}=\sup _{z \in I_z} \|\bar{u}\|_{r}.
	$$
	In all these notations, the sub-index $r$ is omitted when $r=0$.

	All the norms or inner products with a single bound (like $|\cdot|,(\cdot, \cdot),[\cdot, \cdot])$, integral in $x, v$ and pointwise in $z$, are functions of $z$. All the norms or inner products with a double bound (like $\|\cdot\|,((\cdot, \cdot)),[[\cdot, \cdot]])$, integral with respect to all variables, are constant numbers.

	The $L^{2}$ inner product of functions defined on $x$-space of $x, v$-space is denoted by $\langle\cdot, \cdot\rangle$, i.e.,
	$$
	\langle f, g\rangle=\int f g \mathrm{d} x, \quad \text { or } \quad\langle f, g\rangle=\iint f g \mathrm{d} v \mathrm{d} x .
	$$
	In case the inputs also depend on $z$, $\langle f, g\rangle$ only integrates in $x$ or $(x, v)$, and the inner product is a function of $z$. For example,
	$$
	\langle f, g\rangle(z)=\int f(x, z) g(x, z) \mathrm{d} x.
	$$

	Next we introduce the inner products related to the hypocoercivity arguments. Define
	$$
	\mathcal{K}_i=\dfrac{\bar{\theta}}{i}\nabla_{v}+\frac{v}{2}, \quad \mathcal{P}_i=\dfrac{\bar{\theta}}{i} v \cdot \nabla_{x}, \quad \mathcal{S}_{j}=\left[\mathcal{K}_{ij}, \mathcal{P}_i\right]=\mathcal{K}_{ij} \mathcal{P}_i-\mathcal{P}_i \mathcal{K}_{ij}=\dfrac{\bar{\theta}^2}{i^2}\partial_{x_{j}}, \quad \mathcal{K}_i^{*}=-\dfrac{\bar{\theta}}{i}\nabla_{v}+\frac{v}{2},
	$$
	where $\mathcal{K}_i^{*}$ is the adjoint operator of $\mathcal{K}_i$, in the sense that $\langle\mathcal{K}_i f_i, g_i\rangle=\left\langle f_i, \mathcal{K}_i^{*} \cdot g_i\right\rangle$, where $f_i$ has one component and $g_i$ has three components.

	For functions $f_i=f_i(x, v), g_i=g_i(x, v)$, define
	$$
	\begin{aligned}
		&(f_i, g_i)=\frac{1}{i^{1/3}}\left(2\langle\mathcal{K}_i f_i, \mathcal{K}_i g\rangle+\varepsilon^2\langle\mathcal{K}_i f_i, \mathcal{S}_i g_i\rangle+\varepsilon^2\langle\mathcal{S}_i f_i, \mathcal{K}_i g_i\rangle+\varepsilon^3\langle\mathcal{S}_i f_i, \mathcal{S}_i g_i\rangle\right), \\
		&[f_i, g_i]=\langle\mathcal{K}_i f_i, \mathcal{K}_i g_i\rangle+\varepsilon^4\langle\mathcal{S}_i f_i, \mathcal{S}_i g_i\rangle+\varepsilon^2\left\langle\mathcal{K}_i^{2} f_i, \mathcal{K}_i^{2} g_i\right\rangle+\varepsilon^4\langle\mathcal{K}_i \mathcal{S}_i f_i, \mathcal{K}_i \mathcal{S}_i g_i\rangle,
	\end{aligned}
	$$
	where we denote $\langle\mathcal{K}_i \mathcal{S}_i f_i, \mathcal{K}_i \mathcal{S}_i g_i\rangle:=\sum_{j, l=1}^{3}\left\langle\mathcal{K}_{ij} \mathcal{S}_{il} f_i, \mathcal{K}_{ij} \mathcal{S}_{il} g_i\right\rangle .$
	
	For functions $f=f(x, v, z), g=g(x, v, z)$, define
	\begin{equation}\label{def:fg}
		\begin{aligned}
			(f, g)_{s, r}=\sum_{|\gamma| \leq r} \sum_{|\alpha| \leq s}\left(\partial^{\alpha} f^{\gamma}(\cdot, \cdot, z), \partial^{\alpha} g^{\gamma}(\cdot, \cdot, z)\right),\\
			[f, g]_{s, r}=\sum_{|\gamma| \leq r} \sum_{|\alpha| \leq s}\left[\partial^{\alpha} f^{\gamma}(\cdot, \cdot, z), \partial^{\alpha} g^{\gamma}(\cdot, \cdot, z)\right],
		\end{aligned}
	\end{equation}
	where $(f, g)_{s, r}$ or $[f, g]_{s, r}$ is a function of $z$.
	
	Then we introduce the inner product in the $(x, v, z)$ space:
	$$
	\langle\langle f, g\rangle\rangle=\int\langle f, g\rangle \pi(z) \mathrm{d} z,\quad
	(( f, g))=\int( f, g)\pi(z) \mathrm{d} z,\quad
	[[ f, g]]=\int[ f, g] \pi(z) \mathrm{d} z,
	$$
	$$
	(( f, g))_{s, r}=\int( f, g)_{s, r}\pi(z) \mathrm{d} z,\quad
	[[ f, g]]_{s, r}=\int[ f, g]_{s, r} \pi(z) \mathrm{d} z.
	$$
	
	\subsection{Main results}	
	
	We make the following assumptions on the random initial data:
	\begin{assumption}\label{assump}
		Assume that each component of the random variable $z:=\left(z_j\right)_{j \geq 1}$ has a compact support.  Let $\left(\psi_j\right)_{j \geq 1}$ be an affine representer of the random initial data $h_{\text {in}}:=(u_{in},\{f_{i,in}\}_{i=1}^N\})$, which by definition means that  \cite{Cohen2015}
		\begin{equation}\label{Assump1}
			h_{\mathrm{in}}(z)=\tilde{h}_0+\sum_{j \geq 1} z_j \psi_j,
		\end{equation}
		where $\tilde{h}_0=\tilde{h}_0(x, v)$ is independent of $z$, and the sequence $\left(\left\|\psi_j\right\|_{L^{\infty}(V)}\right)_{j \geq 1} \in \ell^p$ for $0<p<1$, with $V$ representing the physical space.
	\end{assumption}
	
	The reason we need to assume compact support of random variable $z$ and \eqref{Assump1} is that our analysis is based on  \cite{Cohen2015}, the result of which will be used when we estimate the projection error for greedy algorithm later in this section.
	
	We now state the main result of this section:
	
	\begin{theorem}\label{Thm1}
		Let Assumption \ref{assump} hold and assume $(u, \{f_i\}_{i=1}^N)$ solves \eqref{eq:uf} with initial data verifying \eqref{initialu0} and \eqref{initialu0_2}. 
		Fix a point $z$. Define the energy
		\begin{equation}\label{def:E}
			E(t ; z)=E_{s, r}(t ; z)=\frac{1}{\delta^2}|u|_{s,r}^{2}+\kappa\bar{\theta}\sum_{i=1}^{N}|f_i|_{s, r}^{2}+\frac{1}{\delta^2}|\bar{u}|_{r}^{2},
		\end{equation}
		with integers $s \geq 2$ and $r \geq 0$. 
		Assume there exists a sufficiently small positive number $\eta$  independent of $\delta$ and $\varepsilon$ such that 
		\begin{equation}\label{assumpI}
			\sup_{z\in I_z} E_{s+3,r}(0,z) \leq \eta \text{~~for } s \geq 0, r\geq 0, 
		\end{equation}
		and that $C_{s, r}^{h}=\kappa\bar{\theta}\sum_{i=1}^N\left.(f_i, f_i)_{s, r}\right|_{t=0}$ (defined by \eqref{def:fg})  is finite.
		Then by implementing $K$ high-fidelity simulation runs, the error estimate is given by
		\begin{eqnarray}\label{thm1result}
			\left\|\textbf{u}_j^H(z)-\textbf{u}_j^B(z)\right\|_{s,0} 
			\leq \frac{C_1}{(K / 2+1)^{q / 2}}+\frac{1}{\sqrt{\lambda_0}} \max \left\{\eta', C_{\xi}\right\}\left(1+e^{-\lambda t}\right) \sqrt{K} \delta,
		\end{eqnarray}
		where $C_1, \eta', C_{\xi}$ are  bounded constants independent of $\varepsilon$ and $N$, and depend on the initial data of the perturbation solutions $\eta$ and $\xi$ ($\eta$ and $\xi$ are sufficiently small positive numbers). 
	\end{theorem}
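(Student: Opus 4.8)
The plan is to follow the three-term decomposition \eqref{eqsThreeParts}, applied to the $j$-th macroscopic component, namely
\[
\begin{aligned}
\mathbf{u}_j^H(z)-\mathbf{u}_j^B(z)
={}&\bigl(\mathbf{u}_j^H(z)-\mathbf{u}_j^L(z)\bigr)
+\Bigl(\mathbf{u}_j^L(z)-\sum_{k=1}^K c_k(z)\,\mathbf{u}_j^L(z_k)\Bigr)\\
&+\sum_{k=1}^K c_k(z)\bigl(\mathbf{u}_j^L(z_k)-\mathbf{u}_j^H(z_k)\bigr),
\end{aligned}
\]
and to bound the three pieces separately in the $\|\cdot\|_{s,0}$ norm. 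The first and third are modeling errors between the kinetic-fluid system and its hydrodynamic limit, controlled by the hypocoercivity energy estimate; they produce the $\delta$-scaled contribution. The middle piece is the greedy projection error, controlled by the best-$K$/Kolmogorov-width bound of \cite{Cohen2015}; it produces the $(K/2+1)^{-q/2}$ term. The structure follows the framework of \cite{Gamba2021}.

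The central estimate for the modeling error, adapted from \cite{JinLin2022} to the present multi-size scaling, is a decay inequality for the energy $E_{s,r}$ of \eqref{def:E}. Working with the hypocoercive inner products $(\cdot,\cdot)$ and $[\cdot,\cdot]$ built from $\mathcal{K}_i,\mathcal{P}_i,\mathcal{S}_i$, I would differentiate a modified functional equivalent to $E_{s,r}$ in time along \eqref{eq:uf}, absorb the nonlinear and coupling contributions using the smallness hypothesis \eqref{assumpI}, and close a Gronwall-type inequality of the form $\tfrac{d}{dt}\mathcal{E}+\lambda_0[[\cdot,\cdot]]_{s,r}\le 0$, yielding $E_{s,r}(t;z)\lesssim E_{s,r}(0;z)\,e^{-\lambda t}+(\text{bounded})$ uniformly in $\varepsilon$ and $N$. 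Since both $\mathbf{u}_i^H-\mathbf{u}_i^{st}$ (by \eqref{uH_ust}) and the linearized low-fidelity deviation $(\delta\tilde n_i,\delta\tilde u)$ solving \eqref{equLtilde} are $O(\delta)$, the moment difference $\mathbf{u}^H-\mathbf{u}^L$ equals $\delta$ times the difference of the two perturbations, and the uniform energy control gives $\sup_{z}\|\mathbf{u}^H(z)-\mathbf{u}^L(z)\|_{s,0}\le \max\{\eta',C_\xi\}(1+e^{-\lambda t})\,\delta$. This is exactly the step that exploits closeness of moments even when $\varepsilon=O(1)$ and the hydrodynamic limit itself fails: the smallness comes from $\delta$, not from $\varepsilon$.

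For the third piece I would use Cauchy--Schwarz, $\|\sum_k c_k(\cdots)\|_{s,0}\le \|c\|_{\ell^2}\sqrt{K}\max_k\|\mathbf{u}^L(z_k)-\mathbf{u}^H(z_k)\|_{s,0}$, bound $\|c\|_{\ell^2}$ by $1/\sqrt{\lambda_0}$ through the smallest eigenvalue $\lambda_0$ of the Gramian \eqref{Gramianmatrix}--\eqref{GLsystem}, and insert the pointwise modeling bound above; together with the (smaller) first piece this gives the $\frac{1}{\sqrt{\lambda_0}}\max\{\eta',C_\xi\}(1+e^{-\lambda t})\sqrt{K}\,\delta$ contribution. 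For the middle piece, Assumption \ref{assump} supplies an affine representer with $(\|\psi_j\|)_j\in\ell^p$, $0<p<1$; invoking the Kolmogorov $K$-width bound of \cite{Cohen2015}, whose algebraic decay rate is governed by $p$, together with the standard greedy-versus-width comparison, transfers this to the projection error and yields the $C_1(K/2+1)^{-q/2}$ term.

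Summing the three bounds produces \eqref{thm1result}. The main obstacle is the hypocoercivity estimate of the second paragraph: unlike the earlier scalings of \cite{JinLin2022}, one must track the perturbation parameter $\delta$ explicitly through every term, verify that all constants are uniform in $\varepsilon$ and in the number of phases $N$ (which forces careful bookkeeping of the $i^{1/3},i^{2/3}$ weights appearing in the multi-phase drag and Fokker--Planck terms), and ensure the nonlinear and coupling contributions are absorbed by the dissipation under the single smallness hypothesis \eqref{assumpI}. Once this uniform, $\delta$-resolved energy decay is established, the remaining steps are routine applications of \cite{Gamba2021,Cohen2015}.
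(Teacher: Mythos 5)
Your proposal follows essentially the same route as the paper: the identical three-term splitting \eqref{eqsThreeParts}, the hypocoercivity energy decay of Lemma \ref{thm:hypocoercivity} (proved via the modified functional and Gronwall argument you sketch) combined with the $O(\delta)$ bound \eqref{step2.2uLust} on the linearized low-fidelity solution to control the modeling terms, the Gramian minimum-eigenvalue bound $\|\mathbf{c}_j\|\le\lambda_{j,0}^{-1/2}\|\mathbf{u}_j^L\|_0$ plus Cauchy--Schwarz for the third term, and the greedy-versus-Kolmogorov-width comparison with the Legendre-expansion bound $d_K\lesssim (K+1)^{-q}$, $q=1/p-1$, from \cite{Cohen2015,LiuZhu2020} for the middle term. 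The only slight deviation is cosmetic: the paper's Lemma \ref{thm:hypocoercivity} gives pure exponential decay $E_{s,r}(t)\le C\left(E_{s,r}(0)+C^h_{s,r}\right)e^{-\lambda t}$ rather than your ``decay plus bounded remainder,'' but since the final estimate only uses the factor $\left(1+e^{-\lambda t}\right)$, this makes no difference to \eqref{thm1result}.
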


	\begin{remark}
		If additionally one assumes that $\delta=$ $o(1 / \sqrt{K})$, then
		$$
		\left\|\textbf{u}_j^H(z)-\textbf{u}_j^B(z)\right\|_{s,0} \leq \frac{C_1}{(K / 2+1)^{q / 2}}+C_2,
		$$
		where $C_2=o(1)$, and is independent of $K$ and $\varepsilon$.
	\end{remark}
	
	Theorem \ref{Thm1} indicates that the error between the bi- and high-fidelity solutions decays algebraically with respect to the number of high-fidelity runs $K$. The convergence rate $q / 2$ is independent of the dimension of the random space and the regularity of the initial data; it only relates to the $\ell^p$ summability of the affine representer $\left(\psi_j\right)_{j \geq 1}$ in Assumption \ref{assump}.
	
	\begin{remark}
		For the two-phase flow model in fine particle regime where a single species of particles is considered with a given and fixed mass density and size, the scaled PDE systems are given by:
		\begin{equation}\label{eq:two}
			\left\{\begin{aligned}
				&F_{t}+v \cdot \nabla_{x} F=\frac{1}{\varepsilon}\operatorname{div}_{v}\left((v-u) F+\nabla_{v} F\right), \\
				&u_{t}+u \cdot \nabla_{x} u+\nabla_{x} p-\Delta_{x} u=\frac{\kappa}{\varepsilon} \int_{\mathbb{R}^{3}}(v-u) F \mathrm{d} v, \\
				&\nabla_{x} \cdot u=0,
			\end{aligned}\right.
		\end{equation}
		with the initial condition that depends on $z$:
		$$\left.u\right|_{t=0}=u_0, \quad \nabla_x \cdot u_0=0,\left.\quad F\right|_{t=0}=F_{0},$$
		with $u_0 = u(0,x,z), F_0 = F(0,x,v,z)$.
		Let  $\textbf{u}^H$ be the high-fidelity solution, which is defined by the macroscopic moments of density and momentum obtained from the distribution $F$ solved by system \eqref{eq:two}:
		$$\mathbf{u}^H=\int_{\mathbb{R}^{3}}\left(\begin{array}{c}
			1 \\
			v 
		\end{array}\right) f(v) \mathrm{d} v:=\left(\begin{array}{c}
			n^H \\
			n^H u^H 
		\end{array}\right):=\left(\begin{array}{c}
			\textbf{u}_1^H \\
			\textbf{u}_2^H 
		\end{array}\right) .$$
		The incompressible Navier-Stokes system
		\begin{equation}\label{eq:NS}
			\left\{\begin{aligned}
				& \partial_{t} n +\nabla_{x} \cdot(n u)=0, \\
				&\partial_{t}\left(\left(1+ \kappa n\right)u\right)+\operatorname{Div}_{x}\left(\left(1+ \kappa n\right)u \otimes u\right)+ \nabla_{x} \left(p+ \kappa n \right)-\Delta_{x} u=0, \\
				&\nabla_{x} \cdot u=0,
			\end{aligned}\right. 
		\end{equation}
		is chosen as the low-fidelity model, which is a first-order ($O(\varepsilon)$) approximation to the two-phase flow system, and can model the variations in the random space of macroscopic quantities of the two-phase flow system \eqref{eq:two} up to a certain accuracy.
		
		In this framework, let $F$ be the perturbed solution. Consider the ansatz
		\begin{eqnarray}\label{eq:TwoAnsatz}
			F=M+\delta \sqrt{M} f,
		\end{eqnarray}
		where parameter $\delta$ is assumed sufficiently small and independent of $\varepsilon$, and $M$ is the Maxwellian. 
		Plugging \eqref{eq:TwoAnsatz} into \eqref{eq:two}, the following system for the perturbation $\left(u,f\right)$ is achieved:
		\begin{eqnarray}\label{eq:TwoPerturbed}
			\left\{\begin{array}{l}
				\left(f\right)_t+v \cdot \nabla_x f+\frac{1}{\varepsilon} u \cdot\left(\nabla_v-\frac{ v}{2 }\right) f-\frac{1}{ \varepsilon \delta} u \cdot v \sqrt{M}=\frac{1}{\varepsilon}\left(-\frac{|v|^2}{4}+\frac{3}{2}+ \Delta_v\right) f, \\
				u_t+u \cdot \nabla_x u+\nabla_x p-\Delta_x u+\frac{\kappa}{\varepsilon} u +\frac{\kappa \delta}{\varepsilon} u \int_{\mathbb{R}^3} \sqrt{M} f \mathrm{~d} v-\frac{\kappa \delta}{\varepsilon}  \int_{\mathbb{R}^3} v \sqrt{M} f \mathrm{~d} v=0, \\
				\nabla_x \cdot u=0,
			\end{array}\right.
		\end{eqnarray}
		with the initial data
		\begin{eqnarray}
			\left.u\right|_{t=0}=u_0,\left.\quad f\right|_{t=0}=f_{ 0},
		\end{eqnarray}
		\begin{eqnarray}\label{two:initialu0}
			\int_{\mathbb{T}^3} u_0 \mathrm{~d} x+\int_{\mathbb{T}^3} \int_{\mathbb{R}^3}  \delta v \sqrt{M} f_{ 0} \mathrm{~d} v \mathrm{~d} x=0, \quad \nabla_x \cdot u_0=0,
		\end{eqnarray}
		and
		\begin{eqnarray}\label{two:initialu0_2}
			\int_{\mathbb{T}^3} \int_{\mathbb{R}^3} \sqrt{M} f_{ 0} \mathrm{~d} v \mathrm{~d} x=0 .
		\end{eqnarray}
		Denote the steady state by $\mathbf{u}^{\text {st }}=(1,0)^T=\left(\textbf{u}_1^{s t}, \textbf{u}_2^{s t}\right)$, 
		then
		$$
		\mathbf{u}^H=\mathbf{u}^{s t}+\delta \int_{\mathbb{R}^{3}}\left(\begin{array}{c}
			1 \\
			v 
		\end{array}\right) \sqrt{M} f(v) \mathrm{d} v.
		$$
		We let the low-fidelity solution
		$$
		\mathbf{u}^L=\left(\begin{array}{c}
			n^L \\
			n^L u^L 
		\end{array}\right):=\left(\begin{array}{c}
			\textbf{u}_1^L \\
			\textbf{u}_2^L 
		\end{array}\right)
		$$
		obtained from the incompressible Navier-Stokes  system \eqref{eq:NS}. Since the initial condition of the high-fidelity and low-fidelity models are required to be consistent, 
		$$
		\textbf{u}^L_0=\left(\begin{array}{c}
			\int_{\mathbb{R}^{3}} f_0 \mathrm{d} v \\
			\int_{\mathbb{R}^{3}}v f_0  \mathrm{d} v
		\end{array}\right) .
		$$
		We then consider a linearization of the hydrodynamic limit equations around the state 
		$\left(n_0, u_0\right)=(1,0)$
		$$
		n^L=1+\delta \tilde{n}, \quad u^L=\delta \tilde{u}. \quad 
		$$
		One can derive the following equations:
		$$
		\left\{\begin{array}{l}
			\partial_t \tilde{n}+\nabla_x \cdot \tilde{u}=0, \\
			(1+\kappa)\partial_t \tilde{u}+\nabla_x(\tilde{p}+\kappa\tilde{n})-\Delta_{x} \tilde{u}=0.
		\end{array}\right.
		$$
		Similar to multi-phase kinetic-fluid systems, error estimates of the bi-fidelity method for the two-phase kinetic-fluid model can be obtained. We directly give the result in Theorem \ref{ThmTwo} and omit the proof.
	\end{remark}
	
	\begin{theorem}\label{ThmTwo}
		Let Assumption \ref{assump} hold and assume $(u, f)$ solves \eqref{eq:TwoPerturbed} with initial data verifying \eqref{two:initialu0} and \eqref{two:initialu0_2}. 
		Fix a point $z$. Define the energy
		\begin{equation}\label{def:E2}
			E(t ; z)=E_{s, r}(t ; z)=\frac{1}{\delta^2}|u|_{s,r}^{2}+\kappa\bar{\theta}|f|_{s, r}^{2}+\frac{1}{\delta^2}|\bar{u}|_{r}^{2},
		\end{equation}
		with integers $s \geq 2$ and $r \geq 0$. 
		Assume there exists a sufficiently small positive number $\eta$  independent of $\delta$ and $\varepsilon$ such that 
		\begin{equation}\label{assumpI2}
			\sup_{z\in I_z} E_{s+3,r}(0,z) \leq \eta \text{~~for } s \geq 0, r\geq 0. 
		\end{equation}
		Then by implementing $K$ high-fidelity simulation runs, the error estimate is given by
		\begin{eqnarray}\label{thm1result2}
			\left\|\textbf{u}_j^H(z)-\textbf{u}_j^B(z)\right\|_{s,0} 
			\leq \frac{C_1}{(K / 2+1)^{q / 2}}+\frac{1}{\sqrt{\lambda_0}} \max \left\{\eta', C_{\xi}\right\}\left(1+e^{-\lambda t}\right) \sqrt{K} \delta,
		\end{eqnarray}
		where  $C_1, \eta', C_{\xi}$ are  bounded constants independent of $\varepsilon$ and $N$, and depend on the initial data of the perturbation solutions $\eta$ and $\xi$ ($\eta$ and $\xi$ are sufficiently small positive numbers).
	\end{theorem}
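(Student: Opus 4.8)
The plan is to closely follow the argument for Theorem \ref{Thm1}, specialized to a single particle species so that the size-dependent factors $i^{1/3}, i^{2/3}$ and the weighted Maxwellians $\mu_i$ collapse to $1$ and $M$. I would start from the three-part splitting \eqref{eqsThreeParts},
$$\mathbf{u}^H - \mathbf{u}^B = (\mathbf{u}^H - \mathbf{u}^L) + \Bigl(\mathbf{u}^L - \sum_{k=1}^K c_k \mathbf{u}^L(z_k)\Bigr) + \sum_{k=1}^K c_k\bigl(\mathbf{u}^L(z_k) - \mathbf{u}^H(z_k)\bigr),$$
estimating the first and third terms by a single uniform high-low fidelity bound and the middle term by best-$N$ projection theory.

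For the high-low fidelity difference, the decisive point is that both solutions are $O(\delta)$ perturbations of the \emph{same} steady state $\mathbf{u}^{st}=(1,0)^T$: one has $\mathbf{u}^H = \mathbf{u}^{st} + \delta\int\binom{1}{v}\sqrt{M}f\,\mathrm{d}v$, while the linearization $n^L = 1 + \delta\tilde{n},\ u^L = \delta\tilde{u}$ gives $\mathbf{u}^L = \mathbf{u}^{st} + \delta(\tilde{n},\tilde{u})^T$. Hence $\mathbf{u}^H - \mathbf{u}^L$ equals $\delta$ times the discrepancy between the low-order velocity moments of $f$ and the solution $(\tilde{n},\tilde{u})$ of the incompressible acoustic system. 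I would then invoke the hypocoercivity estimate of \cite{JinLin2022} for the perturbed system \eqref{eq:TwoPerturbed}: under the smallness hypothesis \eqref{assumpI2}, the energy $E_{s,r}(t;z)$ of \eqref{def:E2} stays bounded and decays exponentially toward equilibrium, uniformly in $\varepsilon$. Since $E_{s,r}$ controls $|f|_{s,r}$ and $|u|_{s,r}/\delta$, this yields $\|\mathbf{u}^H - \mathbf{u}^L\|_{s,0} \le C(1+e^{-\lambda t})\delta$ with $C$ depending only on the initial smallness parameters and not on $\varepsilon$ or the random dimension.

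For the middle term, the low-fidelity map $z\mapsto \mathbf{u}^L(z)$ inherits, through the affine representation of the random initial data in Assumption \ref{assump}, the holomorphic dependence on $z$ needed to apply the Kolmogorov $N$-width bound of \cite{Cohen2015}; the resulting best-$N$ approximation error decays like $(K/2+1)^{-q/2}$, with $q$ fixed by the $\ell^p$ summability of $(\|\psi_j\|_{L^\infty})_j$ and independent of both the random dimension and $\varepsilon$. Bounding the projection coefficients $\{c_k\}$ through the Gramian system \eqref{GLsystem} contributes the factor $\sqrt{K}/\sqrt{\lambda_0}$ multiplying the $O(\delta)$ high-low fidelity bound in the third term, which dominates the plain $O(\delta)$ first term since $\sqrt{K}\ge 1$. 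Assembling the three estimates produces \eqref{thm1result2}.

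The main obstacle will be the uniform-in-$\varepsilon$ control of the high-low fidelity moment difference, \emph{including} the kinetic regime $\varepsilon = O(1)$ where the hydrodynamic limit is genuinely not a valid approximation. The resolution is to avoid any appeal to the limit $\varepsilon\to 0$ and instead exploit the perturbative structure: because $\mathbf{u}^H$ and $\mathbf{u}^L$ are built from the same equilibrium with the same $O(\delta)$ initial perturbation, their leading moments coincide and the remaining discrepancy is governed by $\delta$ alone once the hypocoercive energy is under control. This forces one to track $\delta$ as a perturbation parameter distinct from $\varepsilon$ throughout the energy estimate, exactly as in the proof of Theorem \ref{Thm1}.
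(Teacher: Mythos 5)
Your proposal is correct and follows essentially the same route the paper takes: the paper itself omits the proof of Theorem \ref{ThmTwo}, stating it is obtained exactly as Theorem \ref{Thm1}, i.e.\ via the splitting \eqref{eqsThreeParts}, the hypocoercivity decay $\|\mathbf{u}_j^H-\mathbf{u}_j^{st}\|_{s,r}\leq \delta\eta' e^{-\lambda t}$, the time-uniform bound $\|\mathbf{u}_j^L-\mathbf{u}_j^{st}\|_{s,r}\leq C_{\xi}\delta$ for the linearized acoustic system, the Kolmogorov-width estimate of \cite{Cohen2015} for the greedy projection, and the Gramian bound $\sqrt{K}/\sqrt{\lambda_0}$ on the coefficients. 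The only slight imprecision is attributing the whole high--low bound to hypocoercivity: that lemma controls only $\mathbf{u}^H-\mathbf{u}^{st}$, while the low-fidelity half requires the separate (elementary) boundedness of $\tilde{u}$ for the acoustic system, which is precisely where the constant $C_{\xi}$ and the ``$1$'' in the factor $(1+e^{-\lambda t})$ come from, as your final estimate in fact reflects.
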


	\section{Proof of the error estimates}
	\label{sec:proof}
	\subsection{Prerequisites}	
	
	We first present some lemmas that are crucial for the proof of Theorem \ref{Thm1}. Lemma \ref{thm:energyestimate} gives an energy estimate assuming near-equilibrium initial data. This lemma is proved in Appendix A.1 by an energy estimate on $\partial^{\alpha} f_i^\gamma$. 
	Furthermore, we utilize a standard hypocoercivity argument presented in Lemma \ref{Lem2.2} to strengthen Lemma \ref{thm:energyestimate} and obtain Lemma \ref{thm:hypocoercivity}. Lemma \ref{thm:hypocoercivity} implies that the long-time behavior of the solution is insensitive to random initial data as long as the perturbations $(u_0,\{f_{i, 0}\}_{i=1}^N)$ are small in suitable Sobolev spaces and have vanishing total mass and momentum. Detailed proofs of Lemma \ref{Lem2.2} and Lemma \ref{thm:hypocoercivity} are given in Appendix A.2 and A.3, respectively.
	
	\begin{lemma}\label{thm:energyestimate}
		Assume $(u, \{f_i\}_{i=1}^N)$ solves \eqref{eq:uf} with initial data verifying \eqref{initialu0}. Fix a point $z$, for the energy defined by
		\begin{equation}\label{def:E0}
			E(t ; z)=E_{s, r}(t ; z)=\frac{1}{\delta^2}|u|_{s,r}^{2}+\kappa\bar{\theta}\sum_{i=1}^{N}|f_i|_{s, r}^{2}+\frac{1}{\delta^2}|\bar{u}|_{r}^{2},
		\end{equation}
		with integers $s \geq 2$ and $r \geq 0$, there exists a constant $c_{1}=c_{1}(s, r)>0$, such that $E(0 ; z) \leq c_{1}$ implies that $E(t ; z)$ is non-increasing in $t$.
	\end{lemma}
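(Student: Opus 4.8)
The plan is to prove the lemma by a nonlinear energy estimate carried out directly on the perturbation system \eqref{eq:uf}, differentiated in both the spatial variable $x$ (up to order $s$) and the random variable $z$ (up to order $r$). First I would apply $\partial^{\alpha}\partial_z^{\gamma}$ with $|\alpha|\leq s$ and $|\gamma|\leq r$ to each equation of \eqref{eq:uf}; since every coefficient ($\mu_i$, $\bar\theta$, $i$, $\varepsilon$, $\kappa$) is independent of $z$, the $z$-derivatives commute with the transport and Fokker--Planck operators, and only the quadratic terms $u\cdot\nabla_x u$ and the $u$--$f_i$ coupling produce genuine Leibniz products. I would then test the differentiated $f_i$-equation against $\kappa\bar\theta\,\partial^{\alpha}f_i^{\gamma}$ (integrating in $x,v$), the differentiated $u$-equation against $\delta^{-2}\partial^{\alpha}u^{\gamma}$ (integrating in $x$), and sum over $i$, $\alpha$, $\gamma$; the mean-velocity contribution $\delta^{-2}|\bar u|_r^2$ is treated separately using the averaged identity \eqref{eq:ubar} together with \eqref{eq:ubar2}. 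The free-transport terms $\langle v\cdot\nabla_x\partial^\alpha f_i^\gamma,\partial^\alpha f_i^\gamma\rangle$ and $\langle u\cdot\nabla_x\partial^\alpha u^\gamma,\partial^\alpha u^\gamma\rangle$ vanish by periodicity and $\nabla_x\cdot u=0$, as does the pressure term.

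Next I would isolate the dissipative structure. Writing the Fokker--Planck operator as $-\tfrac{i^{1/3}}{\bar\theta\varepsilon}\mathcal{K}_i^{*}\mathcal{K}_i$, testing against $\kappa\bar\theta\,\partial^\alpha f_i^\gamma$ yields the good term $-\tfrac{\kappa}{\varepsilon}\sum_i i^{1/3}\|\mathcal{K}_i\partial^\alpha f_i^\gamma\|^2$; the viscosity gives $-\delta^{-2}\|\nabla_x\partial^\alpha u^\gamma\|^2$; and the drag $\tfrac{\kappa}{\varepsilon}u\sum_i i^{1/3}$ produces the friction $-\tfrac{\kappa}{\varepsilon\delta^2}(\sum_i i^{1/3})\|\partial^\alpha u^\gamma\|^2$. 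The delicate point is the remaining $O(1/\varepsilon)$ linear coupling terms. Using the identity $m_{i,k}:=\int v_k\sqrt{\mu_i}\,\partial^\alpha f_i^\gamma\,\mathrm{d}v=\langle\mathcal{K}_{i,k}\partial^\alpha f_i^\gamma,\sqrt{\mu_i}\rangle_v$, which follows from the definition of $\mathcal{K}_i$ and integration by parts in $v$, one obtains the moment bound $\|m_i\|_{L^2_x}\lesssim\|\mathcal{K}_i\partial^\alpha f_i^\gamma\|$. I would then combine the two cross terms, the friction, and a fraction of the Fokker--Planck dissipation into a single quadratic form in $(\delta^{-1}\partial^\alpha u^\gamma,\,m_i)$, which is non-positive after inserting the moment bound (with the correct constant) and applying Young's inequality. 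There is no exact cancellation; the combination is the linearized relative-velocity drag dissipation and is sign-definite once the moment bound is used.

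It then remains to bound the nonlinear remainders, each of which is cubic in the perturbation and carries a spare factor $u\sim\delta$: the terms $\tfrac{1}{\varepsilon}\int(u\cdot v)(\partial^\alpha f_i^\gamma)^2$, $\tfrac{1}{\varepsilon}\,u\,\partial^\alpha f_i^\gamma\int\sqrt{\mu_i}f_i$, and the transport remainders from the Leibniz expansion of $u\cdot\nabla_x u$. With $s\geq 2$ the three-dimensional embedding $H^s_x\hookrightarrow L^\infty_x$ lets me extract $\|u\|_{L^\infty_x}\lesssim\delta\sqrt{E}$ (since $\delta^{-2}\|u\|_s^2\le E$), while the velocity weights $|v|$ are absorbed into $\|\mathcal{K}_i\partial^\alpha f_i^\gamma\|^2$ through the Fokker--Planck weight. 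Each nonlinear term is thereby controlled by $C(s,r)\,\delta\sqrt{E}$ times the total dissipation $\mathcal{D}\ge 0$, giving $\tfrac{\mathrm d}{\mathrm dt}E\le\big(-1+C(s,r)\sqrt{E}\big)\mathcal{D}$. Choosing $c_1=c_1(s,r)$ so that $C(s,r)\sqrt{c_1}\le 1$ and running a standard continuity/bootstrap argument shows that $E(0;z)\le c_1$ forces $E(t;z)\le c_1$ for all $t$, whence $\tfrac{\mathrm d}{\mathrm dt}E\le 0$ and $E$ is non-increasing.

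The hardest part will be establishing uniformity in $\varepsilon$: every coupling and nonlinear contribution carries a prefactor $1/\varepsilon$, so the estimate closes only if each such term is matched against the $1/\varepsilon$ dissipation. This forces me to track the exact constants in the moment bound $\|m_i\|\lesssim\|\mathcal{K}_i f_i\|$ and in the completing-the-square step, and to verify that the $i$-dependent weights $i^{1/3}$ and $i^{2/3}$ can be summed over the $N$ species with constants independent of both $\varepsilon$ and $N$. The spare factor $\delta\sqrt{E}$ supplied by the near-equilibrium smallness $E\le c_1$ is precisely what makes the singular nonlinear contributions subordinate to the dissipation uniformly in $\varepsilon$.
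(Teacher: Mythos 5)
Your overall architecture coincides with the paper's (Appendix A.1): differentiate \eqref{eq:uf} in $x$ and $z$, test the $f_i$-equation against $\kappa\bar\theta\,\partial^{\alpha}f_i^{\gamma}$, the $u$-equation against $\delta^{-2}\partial^{\alpha}u^{\gamma}$, treat $\bar u$ via \eqref{eq:ubar}--\eqref{eq:ubar2}, bound the cubic remainders by $\sqrt{E}$ times the dissipation using $H^s_x\hookrightarrow L^{\infty}_x$, and close with a continuity argument choosing $c_1$ small. However, there is a genuine gap at the one step that is actually delicate: your treatment of the stiff $O(1/\varepsilon)$ linear coupling. You assert ``there is no exact cancellation'' and plan to make the form in $(\delta^{-1}u,m_i)$ sign-definite using the moment bound while reserving \emph{a fraction} of the Fokker--Planck dissipation for later absorption. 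With the actual coefficients of \eqref{eq:uf} this is impossible: the friction, the two cross terms, and the \emph{full} dissipation combine into an exact perfect square, which is precisely the paper's good term
\begin{equation*}
G_{2,i,\gamma}=\Bigl|\,u^{\gamma}\sqrt{\mu_i}-\delta\tfrac{\bar\theta}{i}\nabla_v f_i^{\gamma}-\delta\tfrac{v}{2}f_i^{\gamma}\,\Bigr|_s^2
=\bigl|\,u^{\gamma}\sqrt{\mu_i}-\delta\,\mathcal{K}_i f_i^{\gamma}\,\bigr|_s^2 ,
\end{equation*}
and your moment bound $\|m_i\|\leq\|\sqrt{\mu_i}\|_{L^2_v}\|\mathcal{K}_i f_i\|$ is saturated exactly along $\mathcal{K}_i f_i\parallel\sqrt{\mu_i}$. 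Hence the quadratic form is only positive semidefinite when \emph{all} of the dissipation is spent; if you keep back a fraction $\theta<1$, the form evaluates to $(\theta-1)\delta^{-2}\|u\|^2<0$ in the degenerate direction $\mathcal{K}_i f_i=\delta^{-1}u\sqrt{\mu_i}$, and this loss carries the weight $1/\varepsilon$, destroying uniformity.

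The consequence is that your absorption of the stiff nonlinear remainders fails as stated. After your integration by parts, the term $\tfrac{1}{\varepsilon}\int (u\cdot v)(\partial^{\alpha}f_i^{\gamma})^2$ must be matched against $1/\varepsilon$-weighted dissipation, but none is left over once the perfect square is closed; worse, controlling the weight $|v|$ by $\mathcal{K}_i$ produces a remainder proportional to $\tfrac{1}{\varepsilon}\|f_i\|^2$, and the Fokker--Planck operator gives no $1/\varepsilon$ control of the kernel component of $f_i$ (Lemma \ref{thm:energyestimate} assumes only \eqref{initialu0}, so you cannot invoke vanishing mass either). The paper's fix, which your proposal misses, is structural: the nonlinear couplings from the two equations are \emph{kept paired against the bracket of the perfect square}, yielding the bad terms
$B_{2,i,\alpha,\gamma}=\bigl\langle \delta\,\partial^{\alpha}(u f_i)^{\gamma},\,
\partial^{\alpha}\bigl[u^{\gamma}\sqrt{\mu_i}-\delta\tfrac{\bar\theta}{i}\nabla_v f_i^{\gamma}-\delta\tfrac{v}{2}f_i^{\gamma}\bigr]\bigr\rangle$,
so that Cauchy--Schwarz gives $|B_2|\leq \tau G_2+\tfrac{C}{\tau}E\,G_1$, with the $1/\varepsilon$ weight landing harmlessly on $G_2$ itself, and the residual $u$-norm controlled through $G_1$ (viscosity plus the $\bar u$-friction $\tfrac{\kappa}{\varepsilon}(\sum_i i^{1/3}-1)|\bar u^{\gamma}|^2$, which is the real reason the mean-flow equation must be carried in the energy). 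Without this pairing -- or some substitute such as a micro--macro decomposition giving genuine $1/\varepsilon$ control of the macroscopic part -- your estimate does not close uniformly in $\varepsilon$, which is the content of the lemma.
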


	\begin{lemma}\label{Lem2.2}
		Assume $(u, \{f_i\}_{i=1}^N)$ solves \eqref{eq:uf} with initial data verifying \eqref{initialu0} and \eqref{initialu0_2}.  Then there exists a constant $c_1'(s, r) \leq c_{1}(s+3, r)$ such that, if one assumes that $s\geq 0, E_{s+3, r}(0) \leq c_{1}^{\prime}(s, r)$ is small enough, then there exists a constant $\lambda_{1}>0$ such that $($at each $z)$
		\begin{equation}\label{ineq:flambda1}
			\partial_{t}(f_i, f_i)_{s, r}+\frac{\lambda_{1}}{\bar{\theta}\epsilon^2} [f_i, f_i]_{s, r} \leq \frac{C(\lambda_{1})}{\bar{\theta}}\left(\frac{1}{\delta^2}|u|_{s, r}^{2}+\frac{1}{\delta^2}\left|\nabla_{x} u\right|_{s, r}^{2}+\dfrac{1}{\epsilon^2}|\mathcal{K}_i f_i|_{s, r}^{2}\right).
		\end{equation}
	\end{lemma}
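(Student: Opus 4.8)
The plan is to run the standard hypocoercivity scheme (as adapted to the multi-phase setting in \cite{JinLin2022}), treating each phase $i$ separately, since the inter-phase coupling enters only through the shared fluid velocity $u$, which is relegated to the right-hand side. First I would rewrite the kinetic equation for $f_i$ in \eqref{eq:uf} using the operators $\mathcal{K}_i,\mathcal{P}_i,\mathcal{S}_i$. A direct computation gives $\mathcal{K}_i^{*}\mathcal{K}_i=-\frac{\bar\theta^2}{i^2}\Delta_v+\frac{|v|^2}{4}-\frac{3\bar\theta}{2i}$, so the Fokker--Planck operator on the right of \eqref{eq:uf} is exactly $-\frac{i^{1/3}}{\bar\theta\varepsilon}\mathcal{K}_i^{*}\mathcal{K}_i$, while the free transport is $v\cdot\nabla_x=\frac{i}{\bar\theta}\mathcal{P}_i$. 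The whole argument rests on the commutation relations $[\mathcal{K}_{ij},\mathcal{P}_i]=\mathcal{S}_{ij}$, $[\mathcal{S}_{ij},\mathcal{P}_i]=0$, and $[\mathcal{K}_{ij},\mathcal{K}_{il}]=[\mathcal{K}_{ij},\mathcal{S}_{il}]=[\mathcal{S}_{ij},\mathcal{S}_{il}]=0$, together with the skew-adjointness $\mathcal{P}_i^{*}=-\mathcal{P}_i$ and the coercivity of $\mathcal{K}_i^{*}\mathcal{K}_i$, which encode that $\mathcal{S}_i$ is a pure spatial derivative commuting with everything except through its production from $\mathcal{P}_i$.

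Next I would apply $\partial^{\alpha}\partial_z^{\gamma}$ for $|\alpha|\le s$, $|\gamma|\le r$ to the $f_i$-equation and differentiate the modified functional $(f_i,f_i)_{s,r}$ in time. Each of its four blocks --- $2\langle\mathcal{K}_i f_i,\mathcal{K}_i f_i\rangle$, the two cross terms weighted by $\varepsilon^2$, and $\varepsilon^3\langle\mathcal{S}_i f_i,\mathcal{S}_i f_i\rangle$ --- is differentiated using the operator form of the equation. The coercive part $-\mathcal{K}_i^{*}\mathcal{K}_i$ produces the velocity-dissipative terms $\|\mathcal{K}_i f_i\|^2$, $\varepsilon^2\|\mathcal{K}_i^2 f_i\|^2$ and $\varepsilon^4\|\mathcal{K}_i\mathcal{S}_i f_i\|^2$, while the key hypocoercive mechanism is that differentiating the cross term $\langle\mathcal{K}_i f_i,\mathcal{S}_i f_i\rangle$, invoking $\mathcal{K}_i\mathcal{P}_i=\mathcal{P}_i\mathcal{K}_i+\mathcal{S}_i$ and the skew-adjointness of $\mathcal{P}_i$, cancels the $\mathcal{P}_i\mathcal{K}_i$ contributions and leaves the purely \emph{spatial} dissipation $-\frac{2i\varepsilon^2}{\bar\theta}\|\mathcal{S}_i f_i\|^2$. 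The $\varepsilon$-weights $2,\varepsilon^2,\varepsilon^2,\varepsilon^3$ in $(\cdot,\cdot)$ and $1,\varepsilon^4,\varepsilon^2,\varepsilon^4$ in $[\cdot,\cdot]$ are tuned precisely so that, after summing the blocks and using Young's inequality to absorb the cross terms into the diagonal blocks, the combined dissipation assembles into $-\frac{\lambda_1}{\bar\theta\varepsilon^2}[f_i,f_i]_{s,r}$ with a strictly positive $\lambda_1$ uniform in $\varepsilon$.

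It then remains to handle the error terms. The transport acting on $\partial^{\alpha}$ generates commutators $[\partial^{\alpha},v\cdot\nabla_x]$, harmless after integration by parts; the source term $\frac{i^{1/3}}{\bar\theta\varepsilon\delta}u\cdot v\sqrt{\mu_i}$ and the drift term $\frac{1}{i^{2/3}\varepsilon}u\cdot(\nabla_v-\frac{iv}{2\bar\theta})f_i$ produce the linear-in-$u$ contributions which, after Cauchy--Schwarz and absorbing half of the velocity-dissipation, yield precisely the right-hand side $\frac{C}{\bar\theta}\big(\frac{1}{\delta^2}|u|_{s,r}^2+\frac{1}{\delta^2}|\nabla_x u|_{s,r}^2+\frac{1}{\varepsilon^2}|\mathcal{K}_i f_i|_{s,r}^2\big)$. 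The genuinely nonlinear terms (the product $u\cdot\nabla_x f_i$ and the quadratic coupling) are controlled via the Sobolev embedding available for $s\ge 2$ and the a priori smallness of the solution: invoking Lemma \ref{thm:energyestimate} at regularity $s+3$ --- which is why the smallness is imposed on $E_{s+3,r}(0)$ with $c_1'(s,r)\le c_1(s+3,r)$ --- keeps $|u|_{s+3,r}$ and $|f_i|_{s+3,r}$ uniformly small, so these terms are absorbed into the dissipation with an arbitrarily small constant.

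The main obstacle I anticipate is the bookkeeping of $\varepsilon$-powers: one must verify that every error term generated by the commutators and by the cross-term differentiation carries enough powers of $\varepsilon$ (and enough smallness from $E_{s+3,r}$) to be dominated by $\frac{\lambda_1}{\bar\theta\varepsilon^2}[f_i,f_i]_{s,r}$ \emph{uniformly} as $\varepsilon\to 0$. The three extra derivatives in the hypothesis are exactly the regularity loss incurred here: the second-order operators $\mathcal{K}_i^2$ and $\mathcal{K}_i\mathcal{S}_i$ appearing in $[f_i,f_i]_{s,r}$ raise the effective differential order by two, and controlling the nonlinear interactions they generate costs one further derivative, so closing the estimate at level $s$ requires a priori control at level $s+3$. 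Reconciling this derivative count with a positive, $\varepsilon$-independent coercivity constant $\lambda_1$ is the delicate point.
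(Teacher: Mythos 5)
Your proposal follows essentially the same route as the paper's proof in Appendix A.2: the operator reformulation of the $f_i$-equation, the modified energy $(\cdot,\cdot)_{s,r}$ whose cross terms interact with the transport $\frac{i}{\bar\theta}\mathcal{P}_i$ to yield the spatial dissipation $\varepsilon^2|\mathcal{S}_i \partial^\alpha f_i^\gamma|_0^2$, Young absorption with the tuned $\varepsilon$-weights, a commutator/Sobolev bound on the quadratic drift $u\cdot\mathcal{K}_i^{*}f_i$ (the paper's Lemma \ref{lemAppendix}), and smallness of $E_{s+3,r}$ from Lemma \ref{thm:energyestimate} to absorb it into $\frac{\lambda_1}{\bar\theta\varepsilon^2}[f_i,f_i]_{s,r}$. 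Two minor corrections of attribution only: the three extra derivatives arise from the embedding $\|u\|_{L^{\infty}}+\|\nabla_x u\|_{L^{\infty}}\leq C\|u\|_{H^{3}}$ applied to $\partial^{\eta}u^{\beta}$ in the Leibniz expansion, not from the second-order operators $\mathcal{K}_i^2$, $\mathcal{K}_i\mathcal{S}_i$ in $[\cdot,\cdot]_{s,r}$; and the drift $\frac{1}{i^{2/3}\varepsilon}u\cdot(\nabla_v-\frac{iv}{2\bar\theta})f_i$ is precisely that quadratic term (there is no $u\cdot\nabla_x f_i$ in the kinetic equation), while only the source $\frac{i^{1/3}}{\bar\theta\varepsilon\delta}u\cdot v\sqrt{\mu_i}$ contributes the linear-in-$u$ right-hand side.
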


	\begin{lemma}\label{thm:hypocoercivity}
		Assume $(u, \{f_i\}_{i=1}^N)$ solves \eqref{eq:uf} with initial data verifying \eqref{initialu0} and \eqref{initialu0_2}. There exists a constant $c_{1}^{\prime}(s, r)$ such that, if one assumes $s \geq 0, E_{s+3, r}(0) \leq c_{1}^{\prime}(s, r)$, and that $C_{s, r}^{h}=\kappa\bar{\theta}\sum_{i=1}^N\left.(f_i, f_i)_{s, r}\right|_{t=0}$ (defined by \eqref{def:fg}) 
		is finite, then there exists a constant $\lambda>0$ such that
		$$
		E_{s, r}(t) \leq C\left(E_{s, r}(0)+C_{s, r}^{h}\right) e^{-\lambda t},
		$$
		where $C=C(s, r)$.
	\end{lemma}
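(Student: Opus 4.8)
The plan is to run a hypocoercivity (Villani-type) argument: I augment the energy $E_{s,r}$ with a small, suitably $\varepsilon$-weighted multiple of the indefinite inner product $(f_i,f_i)_{s,r}$ to build a Lyapunov functional whose dissipation is \emph{fully} coercive, and then close with Grönwall. First I would revisit the proof of Lemma \ref{thm:energyestimate}: beyond monotonicity, that energy identity produces a dissipation inequality of the form
\[
\frac{d}{dt}E_{s,r}(t)+\frac{c}{\varepsilon^{2}}\sum_{i=1}^{N}|\mathcal{K}_i f_i|_{s,r}^{2}+\frac{c'}{\delta^{2}}|\nabla_x u|_{s,r}^{2}\le 0,
\]
i.e. the Fokker--Planck operator supplies the microscopic dissipation $\varepsilon^{-2}|\mathcal{K}_i f_i|^2$ and the viscous term $-\Delta_x u$ supplies $|\nabla_x u|^2$. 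The defect of this bound is that $\mathcal{K}_i f_i=0$ fails to control the hydrodynamic part of $f_i$, so $E$ alone cannot decay; this is exactly the gap Lemma \ref{Lem2.2} is built to fill, since $[f_i,f_i]_{s,r}$ contains the spatial-dissipation term $\varepsilon^{4}\langle\mathcal{S}_i f_i,\mathcal{S}_i f_i\rangle$ with $\mathcal{S}_i\sim\partial_{x}$.

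Next I would set
\[
\mathcal{L}_{s,r}(t)=E_{s,r}(t)+\alpha\,\kappa\bar{\theta}\,\varepsilon^{2}\sum_{i=1}^{N}(f_i,f_i)_{s,r},
\]
with $\alpha>0$ small. For $\varepsilon\le 1$ and $\alpha$ small the form $(f_i,f_i)_{s,r}$ is positive (its indefinite cross terms are absorbed by Young's inequality into its leading $|\mathcal{K}_i f_i|^2$ and $\varepsilon^{3}|\mathcal{S}_i f_i|^2$ parts), so $E_{s,r}(t)\le\mathcal{L}_{s,r}(t)$, while at $t=0$ one has $\mathcal{L}_{s,r}(0)\le E_{s,r}(0)+C_{s,r}^{h}$ because $\alpha\varepsilon^{2}\le 1$ and $C_{s,r}^{h}$ is finite by hypothesis. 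Differentiating $\mathcal{L}_{s,r}$ and inserting $\alpha\varepsilon^{2}$ times the inequality \eqref{ineq:flambda1}, the crucial cancellation is that the bad right-hand term $\varepsilon^{-2}|\mathcal{K}_i f_i|^2$ there is multiplied by $\alpha\varepsilon^{2}$, becoming $O(\alpha)|\mathcal{K}_i f_i|^2$, hence dominated by the microscopic dissipation inherited from Lemma \ref{thm:energyestimate} once $\alpha$ is small; likewise its $\delta^{-2}|u|^2$ and $\delta^{-2}|\nabla_x u|^2$ terms are absorbed by the $\delta^{-2}|u|_{s,r}^2$ part of $E$ and the viscous dissipation. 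What survives with a good sign is $-\tfrac{\alpha\lambda_1}{\bar\theta}[f_i,f_i]_{s,r}$, whose $\mathcal{S}_i$-component is the previously missing control of the spatial gradient of the hydrodynamic part of $f_i$.

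The central step is then coercivity of the surviving dissipation, i.e. that it bounds $\mathcal{L}_{s,r}$ from below up to a constant factor. I would split $f_i$ into its hydrodynamic part and its microscopic complement: the latter is controlled by $|\mathcal{K}_i f_i|^2$ and the gradient of the former by $\langle\mathcal{S}_i f_i,\mathcal{S}_i f_i\rangle$; the constant-in-$x$ modes are eliminated using the conservation laws \eqref{initialu0} and \eqref{initialu0_2}, which are propagated in time, so that a Poincar\'e inequality on $\mathbb{T}^3$ upgrades the gradient control to full $L^2$ control of $|f_i|_{s,r}^2$. For the fluid, $|\nabla_x u|^2$ plus Poincar\'e controls the mean-zero part $u-\bar u$, while the averaged equation \eqref{eq:ubar} together with the momentum relation \eqref{eq:ubar2} controls $|\bar u|_r^2$ through the decaying $f_i$-moments. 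Because the chosen weight $\alpha\varepsilon^2$ matches the $\varepsilon$-powers in $(f_i,f_i)$ against those in $[f_i,f_i]$, one checks that $\tfrac{\alpha\lambda_1}{\bar\theta}[f_i,f_i]_{s,r}$ dominates $\lambda\,\alpha\varepsilon^2(f_i,f_i)_{s,r}$ for $\varepsilon\le1$, so the total dissipation dominates $\lambda\mathcal{L}_{s,r}$ for some $\lambda>0$ independent of $\varepsilon$ (and of $N$, using that the weights $i^{\pm1/3},i^{-2/3},\bar\theta/i$ are uniformly controlled). Grönwall then gives $\mathcal{L}_{s,r}(t)\le\mathcal{L}_{s,r}(0)e^{-\lambda t}$, and sandwiching with $E_{s,r}(t)\le\mathcal{L}_{s,r}(t)$ and $\mathcal{L}_{s,r}(0)\le C(E_{s,r}(0)+C_{s,r}^h)$ yields the claim.

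I expect the main obstacle to be this coercivity/Poincar\'e step under the $\varepsilon$-weighted coupling: one must verify that the spatial dissipation extracted from $[f_i,f_i]_{s,r}$, the fluid viscous dissipation, and the $O(1/\varepsilon)$ drag coupling combine with the conservation laws to control \emph{all} of $\mathcal{L}_{s,r}$ at a rate $\lambda$ uniform in $\varepsilon$, rather than degenerating like $\varepsilon^2$. Keeping the various $\varepsilon$-powers aligned so that the single weight $\alpha\varepsilon^2$ simultaneously renders $(f_i,f_i)$ absorbable on the left and the collision term of Lemma \ref{Lem2.2} absorbable on the right is the delicate bookkeeping at the heart of the argument.
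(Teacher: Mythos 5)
Your overall strategy is the same as the paper's: perturb the energy by a small multiple of the auxiliary functional $(f_i,f_i)_{s,r}$, add Lemma \ref{Lem2.2} to the dissipation inequality $\partial_t E+G\le 0$ of Lemma \ref{thm:energyestimate} (equation \eqref{ineq:E0}), absorb the bad terms, and close with Gr\"onwall. However, there is one genuine gap, and it is exactly the point you flag at the end but do not resolve: your choice of weight $\alpha\varepsilon^{2}$ in $\mathcal{L}_{s,r}=E_{s,r}+\alpha\kappa\bar{\theta}\varepsilon^{2}\sum_i(f_i,f_i)_{s,r}$ destroys the uniformity in $\varepsilon$ of the decay rate. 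After multiplying \eqref{ineq:flambda1} by $\alpha\varepsilon^{2}$, the surviving good term is $\frac{\alpha\lambda_1}{\bar{\theta}}\sum_i[f_i,f_i]_{s,r}$, an $O(1)$ multiple of $[f_i,f_i]_{s,r}$. But $[f_i,f_i]_{s,r}$ carries the spatial dissipation only with weight $\varepsilon^{4}\langle\mathcal{S}_if_i,\mathcal{S}_if_i\rangle$, whereas the Poincar\'e-type bound used to recover the full norm (Lemma 3.4 in \cite{JinLin2022}, invoked by the paper) reads $|f_i|_{s,r}^{2}\le C\left(|\mathcal{K}_if_i|_{s,r}^{2}+\varepsilon^{2}|\mathcal{S}_if_i|_{s,r}^{2}\right)$, i.e.\ it needs the $\varepsilon^{2}$-weighted gradient. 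Hence $[f_i,f_i]_{s,r}\gtrsim\varepsilon^{2}|f_i|_{s,r}^{2}$ and no better, so an $O(1)$ multiple of $[f_i,f_i]_{s,r}$ controls the component $\kappa\bar{\theta}\sum_i|f_i|_{s,r}^{2}$ of $E$ inside $\mathcal{L}$ only up to a factor $\varepsilon^{2}$. Your Gr\"onwall argument therefore closes with $\lambda\sim\varepsilon^{2}$, which degenerates in the fluid regime and contradicts the statement ($\lambda$ independent of $\varepsilon$). Your penultimate sentence (``one checks that the total dissipation dominates $\lambda\mathcal{L}_{s,r}$ for some $\lambda>0$ independent of $\varepsilon$'') asserts precisely the step that fails; dominating $\lambda\alpha\varepsilon^{2}(f_i,f_i)_{s,r}$ is fine, but dominating $\lambda\kappa\bar{\theta}|f_i|_{s,r}^{2}$ is not.

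The paper's proof avoids this by taking a \emph{constant} weight: $\tilde{E}=E+\lambda_4\kappa\bar{\theta}\sum_i(f_i,f_i)_{s,r}$, with no $\varepsilon^{2}$ factor — the $\varepsilon$-powers are already built into the definitions of $(\cdot,\cdot)$ and $[\cdot,\cdot]$. The gained dissipation is then $\lambda_4\lambda_1\frac{\kappa}{\epsilon^{2}}\sum_i[f_i,f_i]_{s,r}$, and the two equivalences $(f_i,f_i)_{s,r}\le C\epsilon^{-2}[f_i,f_i]_{s,r}$ and $|f_i|_{s,r}^{2}\le C\epsilon^{-2}[f_i,f_i]_{s,r}$ give $\tilde{E}\le C\tilde{G}$ with constants uniform in $\epsilon$, whence the uniform rate. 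Two further bookkeeping corrections follow from this. First, with the constant weight the bad term $\frac{C(\lambda_1)}{\bar{\theta}\epsilon^{2}}|\mathcal{K}_if_i|_{s,r}^{2}$ on the right of \eqref{ineq:flambda1} can no longer be absorbed by the drag dissipation inherited from Lemma \ref{thm:energyestimate}: that dissipation is only $O(\kappa/\epsilon)\,|\mathcal{K}_if_i|^{2}$ (it comes from $G_2=\frac{\kappa}{\epsilon\delta^{2}}\sum_i i^{1/3}\bigl|u^{\gamma}\sqrt{\mu_i}-\delta\mathcal{K}_if_i^{\gamma}\bigr|_{s}^{2}$ combined with the control of $u$ through $G_1$), so your claimed $c/\varepsilon^{2}$ microscopic dissipation from the basic energy estimate is off by one power of $\varepsilon$; in the paper this term is instead absorbed into the $\epsilon^{-2}[f_i,f_i]_{s,r}$ dissipation itself for $\lambda_4$ small. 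Second, nothing is lost at $t=0$: $\tilde{E}(0)\le E(0)+C_{s,r}^{h}$ holds for $\lambda_4\le 1$ just as in your version. The Poincar\'e and conservation-law machinery you sketch for the coercivity step is correct in spirit, but in the paper it is packaged inside the cited Lemma 3.4 of \cite{JinLin2022} and the $|\bar{u}^{\gamma}|^{2}$ term of $G_{1,\gamma}$, rather than re-derived.
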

	
	Note that the constant $\lambda$ is independent of $\epsilon$ and depends on $\kappa, \bar{\theta}$ and $N$ (the number of particle sizes). All the constants $C$ in this paper are independent of $\epsilon$ and may depend on $\kappa, \bar{\theta}$ and $N$.

	\subsection{Proof of Theorem \ref{Thm1}}
	\label{sec:proof1}
	
	Taking $\|\cdot\|_{s,0}$ norm on both sides of the equality \eqref{eqsThreeParts} for each moment component $(j=1,2)$, one has
	\begin{eqnarray}\label{equjHB}
		\begin{aligned}
			&\left\|\textbf{u}_j^H(z)-\textbf{u}_j^B(z)\right\|_{s,0}=\left\|\textbf{u}_j^H(z)-\sum_{k=1}^K c_k(z) \textbf{u}_j^H\left(z_k\right)\right\|_{s,0}\\ \leq& \underbrace{\left\|\textbf{u}_j^H(z)-\textbf{u}_j^L(z)\right\|_{s,0} }_{\text {Term  } A_1 }
			+\underbrace{\left\|\textbf{u}_j^L(z)-\sum_{k=1}^K c_k(z) \textbf{u}_j^L\left(z_k\right)\right\|_{s,0}}_{\text {Term } A_2 } \\ &+\underbrace{\left\|\sum_{k=1}^K c_k(z)\left(\textbf{u}_j^L\left(z_k\right)-\textbf{u}_j^H\left(z_k\right)\right)\right\|_{s,0}}_{\text {Term  } A_3 }.
		\end{aligned}
	\end{eqnarray}
	The first term $A_1$ is estimated by $\left\|\textbf{u}_j^H-\textbf{u}_j^{s t}\right\|_{s,r}$  and $\left\|\textbf{u}_j^L-\textbf{u}_j^{s t}\right\|_{s,r}$. The second term $A_2$ is actually the projection error of the greedy algorithm when searching the most important points $\gamma_K$ from the low-fidelity solution manifold. To estimate the third term $A_3$,  a bound for the vector $\|\mathbf{c}\|$ with $\|\cdot\|$, the matrix induced $\ell_2$ norm, is obtained and then the Cauchy-Schwarz inequality is applied.

	Under the assumption with initial data verifying \eqref{initialu0} and \eqref{initialu0_2}, based on the hypocoercivity analysis Lemma \ref{thm:hypocoercivity} derived in \cite{JinLin2022}, the following estimate componentwise for $\textbf{u}^H$ is obtained by \eqref{uH_ust} and using the Cauchy-Schwarz inequality for each $j$:
	\begin{eqnarray}\label{step1uHust}
		\left\|\textbf{u}_{j}^H-\textbf{u}_{j}^{s t}\right\|_{s,r} \leq \delta \sum_i i \|f_i\|_{s,r} \leq \delta \eta' e^{-\lambda t}, \quad  j = 1, 2.
	\end{eqnarray}
	
	For the low-fidelity equations (the incompressible Navier-Stokes equations), it is clear that if $\left\|\tilde{u}_j^L(t=0)\right\|_{s,r}$ $\leq \xi$, then at all time $t>0$, 
	\begin{eqnarray}\label{Step2.1uLbound}
		\left\|\tilde{u}_j^L\right\|_{s,r} \leq C_{\xi},
	\end{eqnarray}
	where $\tilde{u}_j^L$ denotes the perturbed part of the low-fidelity solution $\textbf{u}_j^L$. Due to Eqs. \eqref{assumpI} and \eqref{step1uHust}, the bound $\xi$ is also sufficiently small. From Eq. \eqref{Step2.1uLbound}, one has
	\begin{eqnarray}\label{step2.2uLust}
		\left\|\textbf{u}_j^L-\textbf{u}_j^{s t}\right\|_{s,r}=\delta\left\|\tilde{u}_j^L\right\|_{s,r} \leq C_{\xi} \delta .
	\end{eqnarray}
	
	Since the initial data satisfy \eqref{assumpI},
	by Eqs. \eqref{step1uHust} and \eqref{step2.2uLust}, one has
	$$
	\begin{aligned}
		\left\|\textbf{u}_{j}^H-\textbf{u}_{j}^L\right\|_{s,r} &=\left\|\textbf{u}_j^H-\textbf{u}_j^{s t}-\left(\textbf{u}_j^L-\textbf{u}_j^{s t}\right)\right\|_{s,r} \\
		& \leq\left\|\textbf{u}_j^H-\textbf{u}_j^{s t}\right\|_{s,r}+\left\|\textbf{u}_j^L-\textbf{u}_j^{s t}\right\|_{s,r} \\
		& \leq \delta  \eta' e^{-\lambda t} +C_{\xi} \delta ;
	\end{aligned}
	$$
	thus
	\begin{eqnarray}\label{step3uHuL}
		\left\|\textbf{u}_j^H-\textbf{u}_j^L\right\|_{s,0} \leq \max \left\{\eta', C_{\xi}\right\} \delta\left(1+e^{-\lambda t}\right) \leq C^{\prime} \delta .
	\end{eqnarray}
	This indicates that the difference of density and momentum obtained between the high- and low-fidelity solutions are all of $O(\delta)$, that is, 
	the first term $A_1$ is $O(\delta)$. 
	
	We now evaluate the second term $A_2$ in Eq. \eqref{equjHB} by the Kolmogorov $K$-width of a functional manifold, which  characterizes the optimal distance for approximation from a general $K$-dimensional subspace. The subscript $j$ for $j=$ $1,2$ in $\textbf{u}^L(z)$ is omitted.  We denote $d_K\left(\textbf{u}^L\left(I_z\right)\right)$ the Kolmogorov $K$-width of the functional manifold $\textbf{u}^L\left(I_z\right)$, defined by
	$$
	d_K\left(\textbf{u}^L\left(I_z\right)\right)=\inf _{\operatorname{dim}\left(V_K\right)=K} \sup _{v \in \textbf{u}^L\left(I_z\right)} d^L\left(v, V_K\right) .
	$$
	Denote the space $\mathcal{H}=H_x^s$.
	From previous work such as \cite{Cohen2015,Devore2013}, we know that the projection error of the greedy algorithm satisfies
	\begin{eqnarray}\label{step6.1sup}
		\sup _{z \in I_z}\left\|\textbf{u}^L(z)-P_{U^L\left(\gamma_K\right)} \textbf{u}^L(z)\right\|_{s} \leq C \sqrt{d_{K / 2}\left(\textbf{u}^L\left(I_z\right)\right)}, 
	\end{eqnarray}
	where $P_{U^L\left(\gamma_K\right)}$ is the projection operator onto the $K$-dimensional subspace $U^L\left(\gamma_K\right)$, and $C$ is a constant.
	Let the $K$-dimensional subspace $V_K:=\operatorname{Span}\left\{c_\nu: \nu \in \Lambda_K\right\}$.
	Following the approach in \cite{LiuZhu2020}, Section 4.2, one obtains an upper bound for $d_K\left(\textbf{u}^L\left(I_z\right)\right)$ by using the Legendre polynomials on $[-1,1]$. Specifically, one has
	\begin{equation}\label{step6.2dK}
		\begin{aligned}
			d_K\left(\textbf{u}^L\left(I_z\right)\right)_{\mathcal{H}} & \leq \sup _{v \in \textbf{u}^L\left(I_z\right)} \min _{w \in V_K}\|v-w\|_{\mathcal{H}}=\sup _{z \in I_z} \min _{w \in V_K}\left\|\textbf{u}^L(z)-w\right\|_{\mathcal{H}} \\
			& \leq\left\|\textbf{u}^L-\sum_{\nu \in \Lambda_K} w_\nu P_\nu\right\|_{L^{\infty}\left(I_z, \mathcal{H}\right)} \leq \frac{C}{(K+1)^{q}}, \quad q=\frac{1}{p}-1 ,
		\end{aligned}
	\end{equation}
	where $p$ is a constant associated with the affine representer $\left(\psi_j\right)_{j \geq 1}$ in the random initial data specified by Eq. \eqref{Assump1}. In addition, $\sum_{\nu \in \Lambda_K} w_\nu P_\nu$ is the truncated Legendre expansion with $\left(P_k\right)_{k \geq 0}$, the sequence of renormalized Legendre polynomials on $[-1,1]$, and $\Lambda_K$ is the set of indices corresponding to the $K$ largest $\left\|w_\nu\right\|_{\mathcal{H}}$.
	Plugging $K/2$ into Eq. \eqref{step6.2dK} and  by Eq. \eqref{step6.1sup},
	\begin{eqnarray}\label{A2estimate}
		\operatorname{Term} A_2 \leq \frac{C_1}{(K/2+1)^{q/2}}.
	\end{eqnarray}
	
	Now, let us obtain an estimate for $||\mathbf{c}_j||\,(j=1,2)$ to bound the third term $A_3$. 
	Recall the definition of the projection onto $U_j^L\left(\gamma_K\right)$ for each $\textbf{u}_j^L(z)$ $(j=1,2)$ given in Eq. \eqref{defuL},
	$$
	\left(\mathcal{P}_{U_j^L\left(\gamma_K\right)}\left[\textbf{u}_j^L(z)\right]\right)^2=\sum_{k, s=1}^K c_k(z) c_s(z) \textbf{u}_j^L\left(z_k\right) \textbf{u}_j^L\left(z_s\right),
	$$
	thus
	$$
	\int_{\Omega}\left(\mathcal{P}_{U_j^L\left(\gamma_K\right)}\left[\textbf{u}_j^L(z)\right]\right)^2 d x=\sum_{k, s=1}^K c_k(z) c_s(z) \int_{\Omega} \textbf{u}_j^L\left(z_k\right) \textbf{u}_j^L\left(z_s\right) d x:=\mathbf{c}_j^T \mathbf{G}_j^L \mathbf{c}_j \geq \lambda_{j,0}\|\mathbf{c}_j\|^2 ;
	$$
	where $\mathrm{G}_j^L$ is the Gramian matrix of $U_j^L\left(\gamma_K\right)$ defined in Eq. \eqref{Gramianmatrix} (where $\langle\cdot, \cdot\rangle$ applies to $L_x^2$ here) and $\lambda_{j,0}>0$ is its minimum eigenvalue. The last inequality is due to the Gramian matrices being positive-semidefinite and we assume it is positive-definite (otherwise $\mathbf{c}_j$ could not be obtained by solving the system \eqref{GLsystem}). 
	
	Since for all $z$,
	$$
	\int_{\Omega}\left(\mathcal{P}_{U_j^L\left(\gamma_K\right)}\left[\textbf{u}_j^L(z)\right]\right)^2 d x \leq \int_{\Omega}\left[\textbf{u}_j^L(z)\right]^2 d x, \ j=1,2,
	$$
	then
	\begin{eqnarray}\label{step4cestimate}
		\|\mathbf{c}_j\| \leq \frac{1}{\sqrt{\lambda_{0,j}}}\left(\int_{\Omega}\left(\mathcal{P}_{U_j^L\left(\gamma_K\right)}\left[\textbf{u}_j^L(z)\right]\right)^2 d x\right)^{1 / 2} \leq \frac{1}{\sqrt{\lambda_{j,0}}}\left\|\textbf{u}_j^L(z)\right\|_{0},
	\end{eqnarray}
	for all $z$. Thus
	$$
	\|\| \mathbf{c}_j\|\|_{\infty} \leq \frac{1}{\sqrt{\lambda_{j,0}}}\left\|\textbf{u}_j^L(z)\right\|_{0,\infty} .
	$$
	Since $\sqrt{x}$ is a monotone function,
	$$
	\|\| \mathbf{c}_j\|\|_{\infty}=\left(\sum_{k=1}^K\left\|c_k(z)\right\|_{\infty}^2\right)^{1 / 2} .
	$$
	By the regularity of $\textbf{u}_j^L$ in Eq. \eqref{step2.2uLust} and the assumption that the volume of $\Omega$ is bounded, Eq. \eqref{step4cestimate} implies that
	\begin{eqnarray}\label{step4.2cestimate}
		\left(\sum_{k=1}^K\left\|c_k(z)\right\|_{\infty}^2\right)^{1 / 2} \lesssim \frac{1}{\sqrt{\lambda_0}}\left\|\textbf{u}_j^L(z)\right\|_{0,\infty} \leq \frac{1}{\sqrt{\lambda_0}}\left\|\textbf{u}_j^L(z)\right\|_{s,\infty} \lesssim \frac{1}{\sqrt{\lambda_0}}\left(\textbf{u}_j^{s t}+C_{\xi} \delta\right) .
	\end{eqnarray}
	By the Cauchy-Schwarz inequality, using Eqs. \eqref{step3uHuL} and \eqref{step4.2cestimate}, one has
	\begin{equation}\label{A3estimate}
		\begin{aligned}
			\operatorname{Term} A_3 & \leq\left(\sum_{k=1}^K\left\|c_k(z)\right\|^2\right)^{1 / 2}\left(\sum_{k=1}^K\left\|\textbf{u}_j^H\left(z_k\right)-\textbf{u}_j^L\left(z_k\right)\right\|_{s,0}^2\right)^{1 / 2} \\
			& \lesssim\left(\sum_{k=1}^K\left\|c_k(z)\right\|_{\infty}^2\right)^{1 / 2}\left(\sum_{k=1}^K\left\|\textbf{u}_j^H\left(z_k\right)-\textbf{u}_j^L\left(z_k\right)\right\|_{s}^2\right)^{1 / 2} \\
			& \leq \sqrt{N}\left(\sum_{k=1}^K\left\|c_k(z)\right\|_{\infty}^2\right)^{1 / 2}\left(\max _n\left\|\textbf{u}_j^H\left(z_k\right)-\textbf{u}_j^L\left(z_k\right)\right\|_{s}\right) \\
			& \leq \frac{C^{\prime}}{\sqrt{\lambda_0}} \sqrt{K} \delta\left(\textbf{u}_j^{s t}+C_{\xi} \delta\right) \lesssim C \sqrt{K} \delta,
		\end{aligned}
	\end{equation}	
	where $C^{\prime}$ and $C$ are generic constants. The boundness of the random variable $z$ is used in the first inequality.
	
	By Eqs. \eqref{equjHB}, \eqref{step3uHuL}, \eqref{A2estimate} and \eqref{A3estimate}, one obtains Eq. \eqref{thm1result} and prove Theorem 1.\qed

	\section{Other choices of the low-fidelity models}
	\label{sec:other}
	
	In this section, we explore an alternative way of selecting low-fidelity models. One can let the low-fidelity model be the same kinetic-fluid equation as the high-fidelity model, while solving it on a coarser temporal and physical mesh, in particular, using a larger $\Delta t, \Delta x, \Delta v$ than those used for the high-fidelity model under the constraint of the CFL condition. Due to the difficulty in conducting error estimates of a discretized scheme for the full nonlinear kinetic-fluid equation, we will show some numerical experiments to demonstrate our concept.

	\subsection{A bi-fidelity algorithm}
	
	Let $M=1000$ be the number of affordable low-fidelity simulation runs. $K=10$ denotes the number of high-fidelity simulation runs that can be afforded. Let $\gamma_K=\left\{z_1, \cdots, z_K\right\}$ be a set of sample points in $I_z$. Denote the low-fidelity snapshot matrix $\textbf{u}^L(\gamma_K)=\left[\textbf{u}^L\left(z_1\right), \cdots, \textbf{u}^L\left(z_K\right)\right]$ and the corresponding low-fidelity approximation space
	$$
	U^L(\gamma_K)=\operatorname{span}\left\{\textbf{u}^L(\gamma_K)\right\}=\operatorname{span}\left\{\textbf{u}^L\left(z_1\right), \cdots, \textbf{u}^L\left(z_K\right)\right\}.
	$$
	Similarly, the high-fidelity snapshot matrix and the corresponding  high-fidelity approximation as follows:
	$$
	\textbf{u}^H(\gamma_K)=\left[\textbf{u}^H\left(z_1\right), \cdots, \textbf{u}^H\left(z_K\right)\right], \quad U^H(\gamma_K)=\operatorname{span}\left\{\textbf{u}^H(\gamma_K)\right\}.
	$$
	The bi-fidelity algorithm for approximating the high-fidelity solution comprises two stages: offline and online. In the offline stage, we employ the cheap low-fidelity model to explore the parameter space and identify the most important parameter points. In the online stage, we utilize the approximation rule calculated from the low-fidelity model for any given $z$ and apply it to construct the bi-fidelity approximation. The detailed algorithm is summarized in Algorithm 1.
	Most of the steps in this algorithm are straightforward. Providing details for Step 3 (point selection) and Step 6 (bi-fidelity approximation) would be beneficial for understanding Algorithm 1.
	
	\begin{table}
		\begin{tabular}{p{14.5cm}} 
			\hline
			\textbf{Algorithm 1: }Bi-fidelity approximation \\ \hline
			\textbf{Offline:}\\
			1 Select a sample set $\Gamma=\left\{z_1, z_2, \ldots, z_M\right\} \subset I_z$.\\
			2 Run the low-fidelity model $u_l\left(z_j\right)$ for each $z_j \in \Gamma$.\\
			3 Select $K$ "important" points from $\Gamma$ and denote it by $\gamma_K=\left\{z_{i_1}, \cdots, z_{i_K}\right\} \subset \Gamma$. Construct the low-fidelity approximation space $U^L\left(\gamma_K\right)$.\\
			4 Run high-fidelity simulations at each sample point of the selected sample set $\gamma_K$. Construct the high-fidelity approximation space $U^H\left(\gamma_K\right)$.\\
			\textbf{Online:}\\
			5 For any given $z$, get the low-fidelity solution $\mathbf{u}^L(z)$ and compute the low-fidelity coefficients by projection: $\mathbf{u}^L(z) \approx \mathcal{P}_{U^L\left(\gamma_K\right)}\left[\mathbf{u}^L(z)\right]=\sum_{k=1}^K c_k(z) \mathbf{u}^L\left(z_k\right)$\\
			6 Construct the bifidelity approximation by applying the sample approximation rule in the low-fidelity model:
			$\mathbf{u}^B(z)=\sum_{k=1}^K c_k(z) \mathbf{u}^H\left(z_k\right).$ \\ \hline
		\end{tabular}
	\end{table}

	\textbf{Point selection.} To select the subset $\gamma_K$, we adopt the greedy algorithm proposed in \cite{Narayan2014, Zhu2014}. Start with an empty trivial subspace $\gamma_0=\varnothing$ and assume that the first $k-1$ important points $\gamma_{k-1}=\left\{z_{i_1}, \cdots, z_{i_{k-1}}\right\} \subset \Gamma$ have been selected. The next point $z_{i_k} \in \Gamma$ is chosen as the point that maximizes the distance between its corresponding low-fidelity solution and the approximation space $U^L\left(\gamma_{k-1}\right)$, 
	$$
	z_{i_k}=\arg \max _{z \in \Gamma} \operatorname{dist}\left(\textbf{u}^L(z), U^L\left(\gamma_{k-1}\right)\right), \quad \gamma_k=\gamma_{k-1} \cup z_{i_k},
	$$
	where $\operatorname{dist}(v, W)$ is the distance function between $v \in \mathbf{u}^L(\Gamma)$ and subspace $W \subset U^L(\Gamma)$. The greedy procedure is essential for searching a linearly independent basis set in the parameterized low-fidelity solution space. Note that the whole algorithm can be efficiently implemented using standard linear algebra operations. In this scenario, we employ pivoted Cholesky decomposition because one needs not actually form the full $u^L(\gamma_K)$ Gramian. Algorithm 2 gives details for this point selection procedure. See \cite{Narayan2014, Zhu2014} for more technical details.

	\begin{table}
		\begin{tabular}{p{14.5cm}}
			\hline
			\textbf{Algorithm 2: }Pivoting Cholesky decomposition for selection of interpolation nodes \\ \hline
			\textbf{Input:} the snapshots $\mathbf{V}=\left[\mathbf{u}^L\left(z_1\right), \mathbf{u}^L\left(z_2\right), \ldots, \mathbf{u}^L\left(z_M\right)\right]$ from the low-fidelity model, the number of interpolation nodes $K$ \\	
			Initialize the ensemble for each parameter $z_m$:
			$\mathbf{w}(m)=\left(\mathbf{u}_m^L\right)^T \mathbf{u}_m^L, m=1, \ldots, M$\\
			Initialize the nodal selection vector and Cholesky factor: $P=[~], ~\mathbf{L}=\operatorname{zeros}(M, K)$\\
			for $k=1, \ldots, K$ do\\
			\quad Find the next interpolation point (the next pivot): $p = \arg \max _{m \in\{n, \ldots, M\}} w(m)$\\
			\quad Update $\mathbf{P}$ and exchange column $k$ and $p$ in $\mathbf{V}$: $P=[P~ p], ~ \mathbf{V}\left(:,\left[	k~ p\right]\right)=\mathbf{V}\left(:,\left[	p~  k\right]\right)$\\
			\quad Update $\mathbf{L}$:\\
			\quad $\mathbf{r}(t)=\mathbf{V}(:, t)^T \mathbf{V}(:, k)-\sum_{j=1}^{K-1} \mathbf{L}(t, j) \mathbf{L}(k, j), \text { for } t=k+1, \ldots, M$\\
			\quad $\mathbf{L}(k, k)=\sqrt{\mathrm{w}(k)}$\\
			\quad $\mathbf{L}(t, k)=\mathbf{r}(t) / \mathbf{L}(k, k) \text { for } t=k+1, \ldots, M$\\
			\quad $ \mathbf{w}(t)=\mathbf{w}(t)-\mathbf{L}^2(t, k) \text { for } t=k+1, \ldots, M$\\
			end for\\
			Truncate the Cholesky factor: $\mathbf{L}=\mathbf{L}(1: K,:)$\\
			Form the Gramian matrix: $\mathbf{G}^L=\mathbf{L L}^T$\\
			\textbf{Output:} the ordered nodal selection $P$ and the Gramian matrix $\mathbf{G}^L$\\
			\hline
		\end{tabular}
	\end{table}

	\textbf{The Bi-fidelity approximation. }In the offline stage, we have constructed the low- and high-fidelity approximation space, $U^L\left(\gamma_K\right)$ (step 3) and $U^H\left(\gamma_K\right)$ (step 4), respectively. In the online stage, for any given sample point $z \in I_z$, we shall project the corresponding low-fidelity solution $\mathbf{u}^L(z)$ onto the low-fidelity approximation space $U^L\left(\gamma_K\right)$ as follows:
	$$
	\mathbf{u}^L(z) \approx \mathcal{P}_{U^L\left(\gamma_K\right)}\left[\mathbf{u}^L(z)\right]=\sum_{k=1}^K c_k(z) \mathbf{u}^L\left(z_{i_k}\right),
	$$
	where $\mathcal{P}_V$ is the projection operator onto a Hilbert space $V$ and the corresponding projection coefficients $\left\{c_k\right\}$ are computed by the following projection:
	\begin{eqnarray}\label{eq:linear}
		\mathbf{G}^L \mathbf{c}=\mathbf{f}, \quad \mathbf{f}=\left(f_k\right)_{1 \leq k \leq K}, \quad f_k=\left\langle \mathbf{u}^L(z), \mathbf{u}^L\left(z_{i_k}\right)\right\rangle^L,
	\end{eqnarray}
	where $G^L$ is the Gramian matrix of $u^L\left(\gamma_K\right)$, defined by
	$$
	\left(\mathbf{G}^L\right)_{i j}=\left\langle \mathbf{u}^L\left(z_{i_k}\right), \mathbf{u}^L\left(z_{j_k}\right)\right\rangle^L, \quad 1 \leq k, i, j \leq K
	$$
	with $\langle\cdot, \cdot\rangle^L$ the inner product associated with the low-fidelity approximation space $U^L\left(\gamma_K\right)$.
	These low-fidelity coefficients $\left\{c_k\right\}$ serve as surrogates of the corresponding high-fidelity coefficients of $\mathbf{u}^H(z)$. Hence, the desired bi-fidelity approximation of $\mathbf{u}^H(z)$ can be constructed as follows:
	$$
	\mathbf{u}^B(z)=\sum_{k=1}^K c_k(z) \mathbf{u}^H\left(z_{i_k}\right) .
	$$
	
	Note that since the number of low-fidelity basis is typically small ($\mathcal{O}(10)$ in our numerical tests), the computation cost of solving the linear system \eqref{eq:linear} to compute the low-fidelity projection coefficients is negligible. The primary cost of the online stage arises from running a single low-fidelity simulation. If the low-fidelity solver is much cheaper than the high-fidelity solver, this approach can lead to a considerable speedup during the online stage.
	
	\subsection{Numerical tests}
	
	To examine the performance of the method, we shall compute numerical errors in the following way: we choose a fixed set of points $\left\{\hat{z}_i\right\}_{i=1}^M \subset I_z$ that is independent of the point sets $\Gamma$, and evaluate the following error between the bi- and high- fidelity solutions at a final time $t$:
	\begin{eqnarray}\label{eq:err}
		\left\|\textbf{u}^H(t)-\textbf{u}^B(t)\right\|_{L^2\left(D \times I_z\right)} \approx \frac{1}{M} \sum_{i=1}^M\left\|\textbf{u}^H\left(\hat{z}_i, t\right)-\textbf{u}^B\left(\hat{z}_i, t\right)\right\|_{L^2(D)},
	\end{eqnarray}
	where $\|\cdot\|_{L^2(D)}$ is the $L^2$ norm in the physical domain $D=\Omega \times \mathbb{R}^2$. The error can be considered as an approximation to the average $L^2$ error in the whole space of $D \times I_z$.
	
	In our bi-fidelity method, the numerical solutions $\textbf{u}^H$ solved by the finer discretized system \eqref{ModelEquation} are considered as the high-fidelity solutions, with $N_x^H$ grid points in the spatial domain. The numerical solution $\textbf{u}^L$ to Eq. \eqref{ModelEquation}, solved by using coarser physical mesh, are the low-fidelity solutions, with $N_x^L$ ($N_x^L<N_x^H$) grid points and the same velocity discretization as the high-fidelity model. We assume the random initial data in the form of Karhunen-Lo\`eve (KL) expansion.
	Both the high- and low- fidelity models in our simulation are numerically solved by Asymptotic Preserving schemes studied in \cite{JinLin2022AP}.
	
	In all the examples, the spatial domain is chosen to be $[0,1]$ with $N_x^H=128$ grid points for high-fidelity models and $N_x^L=64$ or $32$ grid points for low-fidelity models. The velocity domain is chosen as $\left[-L_v, L_v\right]^2$ with $L_v=8$ and $N_v=32$ grid points in each dimension. Set the final time $t=0.1$. Without loss of generality, the random variable $\mathbf{z}$ is assumed to follow the Gaussian distribution.
	The training set $\Gamma$ is chosen to be $M=1000$ random samples of \textbf{$\mathbf{z}$}. 
	
	Take the volcano like initial data with random inputs using the KL expansion as an example: 
	\begin{eqnarray}\label{ex1}
		\begin{aligned}
			& n_1(0, \mathbf{x},\mathbf{z})\\
			&=\left(0.4+100((x-0.5)^2+(y-0.5)^2)\right)\exp \left(-40(x-0.5)^2-40(y-0.5)^2\right)\left(1+\sigma \sum_{i=1}^N \sqrt{\lambda_i} g_i(\mathbf{x}) \mathbf{z}_i^1\right), \\
			& n_2(0, \mathbf{x},\mathbf{z})\\
			&=\left(0.5+100((x-0.5)^2+(y-0.5)^2)\right)\exp \left(-40(x-0.5)^2-40(y-0.5)^2\right)\left(1+\sigma \sum_{i=1}^N \sqrt{\lambda_i} g_i(\mathbf{x}) \mathbf{z}_i^2\right), \\
			& \mathbf{u}_{p, 1}(0, \mathbf{x})=\mathbf{u}_{p, 2}(0, \mathbf{x})=\left(\begin{array}{c}
				\sin ^2(\pi x) \sin (2 \pi y) \\
				-\sin ^2(\pi y) \sin (2 \pi x)
			\end{array}\right), \\
			& \mathbf{u}(0, \mathbf{x})=\mathbf{u}_{p, 1}(0, \mathbf{x}) ,
		\end{aligned}
	\end{eqnarray}
	where $\sigma=0.1$, $\{\mathbf{z}_i^1\}$ and $\{\mathbf{z}_i^2\}$ are independent random variables with zero mean and unit covariance, and $\left\{\lambda_i, g_i(\mathbf{x})\right\}$ are the eigenvalues and eigenfunctions for
	$$
	\int C(\mathbf{x}, \mathbf{z}) g_i(\mathbf{z}) \mathrm{d} \mathbf{z}=\lambda_i g_i(\mathbf{x}), \quad i=1,2, \ldots
	$$
	where $C(\mathbf{x}, \mathbf{z})=\exp (-|\mathbf{x}-\mathbf{z}|^2 / \ell^2)$ is the covariance function for $\kappa(x, y)$ taking the Gaussian form with a correlation length $\ell$. This is the widely adopted Karhunen-Lo\`eve expansion for random field \cite{Sullivan2015}. 
	Here we employ a short correlation length $\ell=0.08$, compared to the length of the domain $L=1$. 
	The decay of eigenvalues are shown in Fig. \ref{Fig:lambda}. Based on the decay, we employ the first 14 eigenvalues and set $N=9$ so that more than $95 \%$ of the total spectrum is retained. 
	For a rigorous analysis on the properties of numerical Karhunen-Lo\`eve expansion, see \cite{Schwab2006}.	
	
	\begin{figure}[htbp]
		\centering
		\includegraphics[width=6cm]{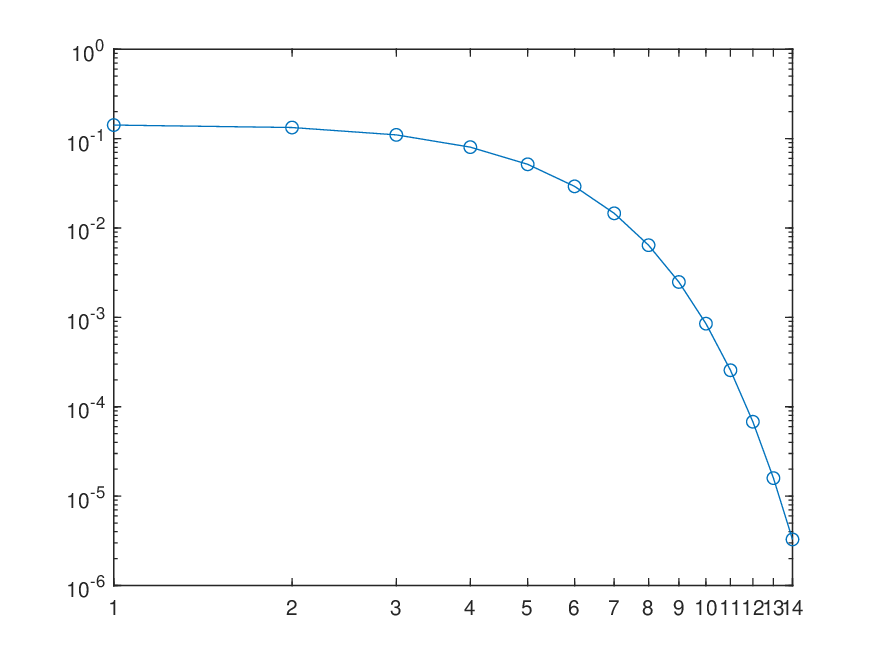}\\
		\caption{Eigenvalues of Karhunen-Lo\`eve expansion.}\label{Fig:lambda}
	\end{figure}
	
	For the initial data \eqref{ex1}, $\mathbf{u}_{p,i} \neq \mathbf{u}\,\, (i=1,2)$ and thus the equilibrium is not assumed. Figs. \ref{E1N32N128fig1}-\ref{E1N32N128fig3} give the solution of the system \eqref{ex1} with $\varepsilon=1$ at $t=0.1$, with $N_x^H=128$ grid points for high-fidelity models and $N_x^L=32$ grid points for low-fidelity models. 
	Fig. \ref{E1N32N128fig1} shows the mean and standard deviation of the bi-fidelity solutions of particle density $n_1$ and particle velocity $u_{p,11}, u_{p,12}$ of the first particle $(i=1)$ by using $r=10$ high-fidelity runs. Here $u_{p,11}, u_{p,11}$ stand for the first and second component of the two-dimensional velocity $\mathbf{u}_{p,1}$, respectively.
	Fig. \ref{E1N32N128fig2} shows the particle density $n_2$ and particle velocity $u_{p,21}, u_{p,22}$ of the second particle $(i=2)$ with $u_{p,21}, u_{p,22}$ standing for the first and second components of $\mathbf{u}_{p,2}$. Fig. \ref{E1N32N128fig3} gives the fluid velocity $\mathbf{u}$ with $u_{1}, u_{2}$ standing for the two components of the two-dimensional bulk velocity $\mathbf{u}$, respectively. Figs. \ref{E1N32N128fig1}-\ref{E1N32N128fig3} show clearly that the mean and standard deviation of the bi-fidelity approximation of $n_1, \mathbf{u}_{p,1}, n_2, \mathbf{u}_{p,2}$ and  $\mathbf{u}$ agree well with the high-fidelity solutions by using only 10 high-fidelity runs. The result suggests that even though the system solved by coarser grids may be inaccurate in the spatial domain, it still can capture the behaviors and characteristics of the multi-phase kinetic-fluid system in the random space.
	
	To compare the bi-fidelity solutions and the high-fidelity solutions, we examine the error of bi-fidelity approximation defined by \eqref{eq:err}, with respect to the number of high-fidelity runs (evaluated over an independent set of $M=1000$ Monte Carlo samples).
	Fast convergence of the mean $L^2$ errors with respect to the number of high-fidelity runs is also observed from Fig. \ref{E1N32N128fig4}.

	\begin{figure}[hp]
		\centering
		\includegraphics[width=14cm]{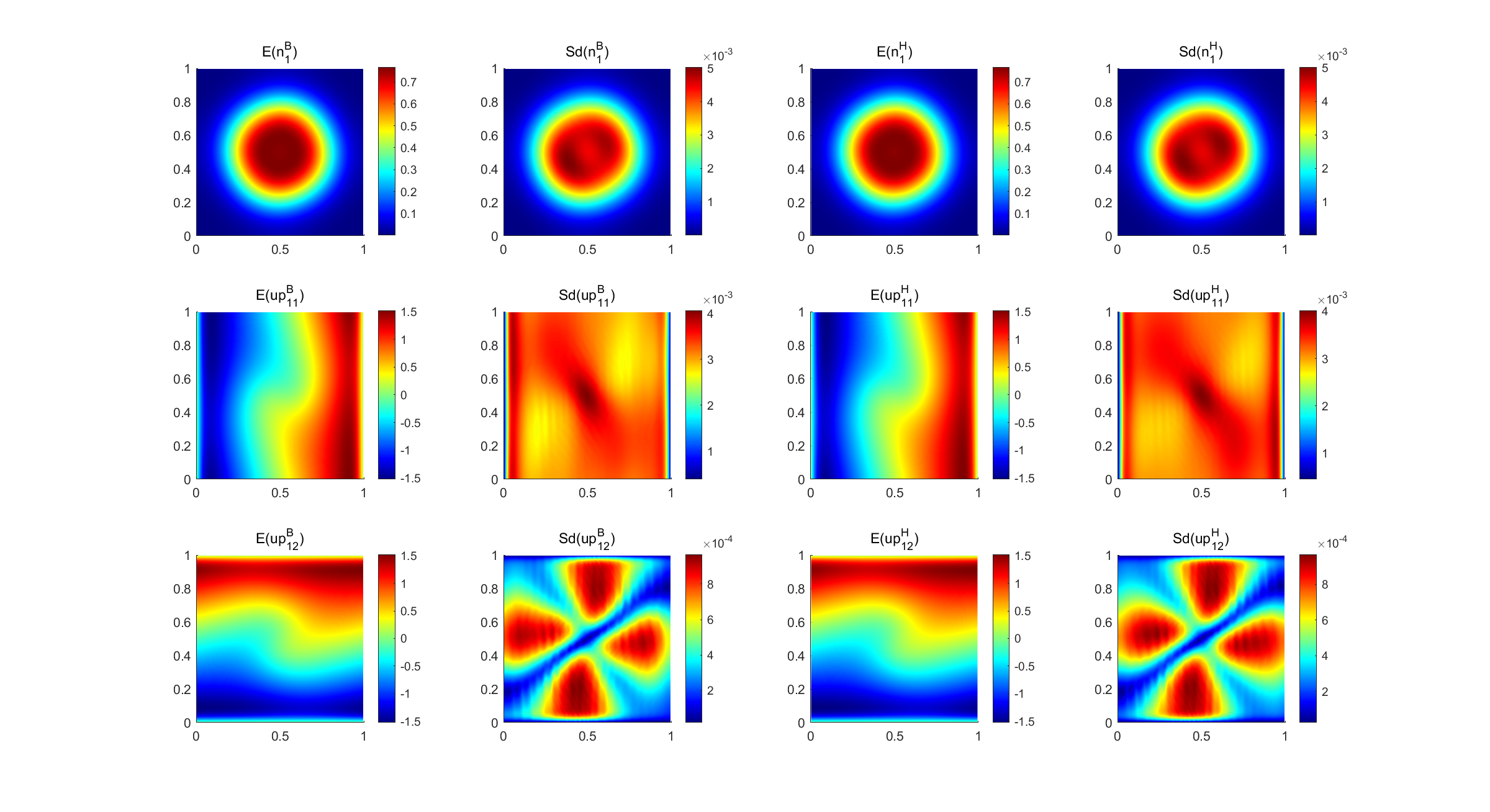}\\
		\caption{Mean and standard deviation of particle density $n_1$ and streamlines of particle velocity $u_{p,11}$, $u_{p,12}$ for bi-fidelity (the left two columns) and high-fidelity (the right two columns) solutions  by using $r=10$ at $t=0.1$ for $\varepsilon=1$ ($N_x^H=128$, $N_x^L=32$).}\label{E1N32N128fig1}
	\end{figure}
	
	\begin{figure}[hp]
		\centering
		\includegraphics[width=14cm]{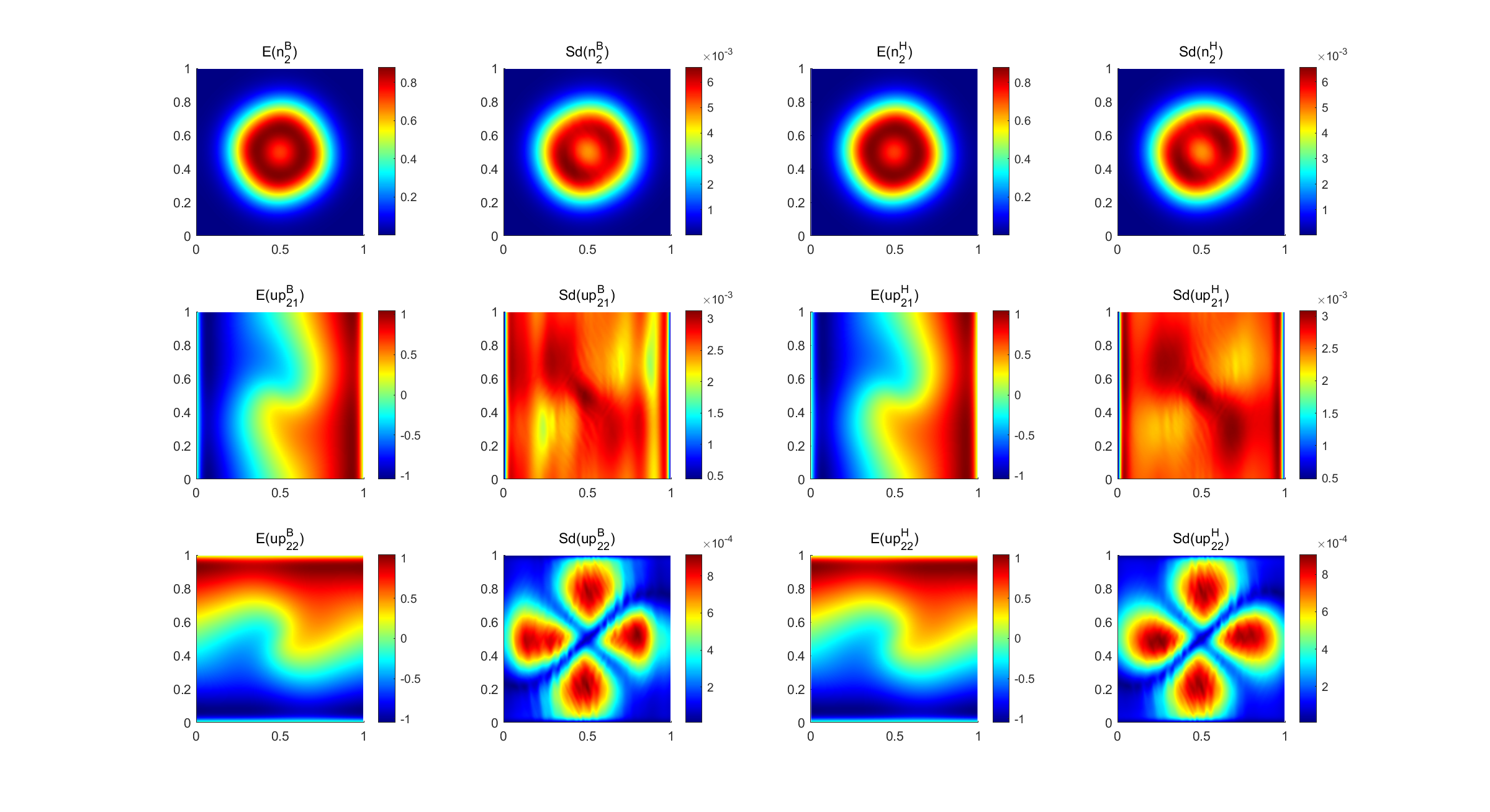}\\
		\caption{Mean and standard deviation of particle density $n_2$ and streamlines of particle velocity $u_{p,21}$, $u_{p,22}$ for bi-fidelity (the left two columns) and high-fidelity (the right two columns) solutions by using $r=10$ at $t=0.1$ for $\varepsilon=1$ ($N_x^H=128$, $N_x^L=32$).}\label{E1N32N128fig2}
	\end{figure}
	
	\begin{figure}[hp]
		\centering
		\includegraphics[width=14cm]{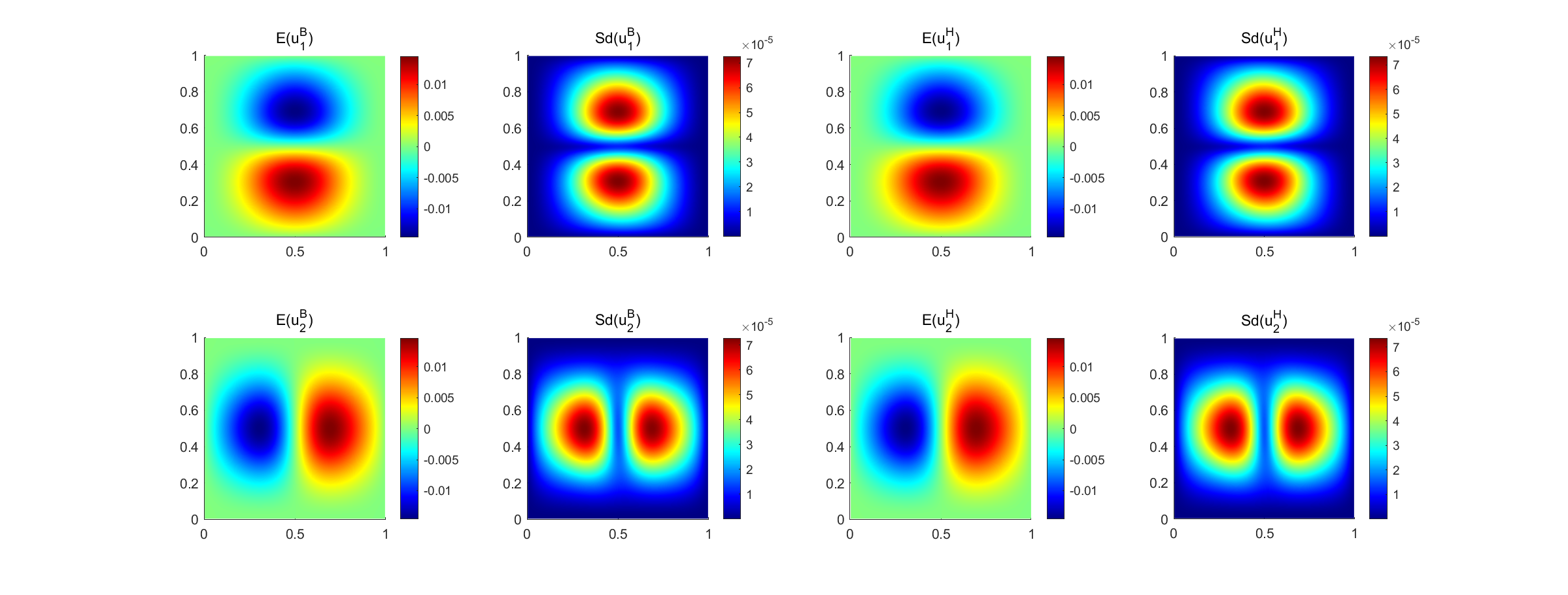}\\
		\caption{Mean and standard deviation of fluid velocity $u_1$, $u_2$ for bi-fidelity (the left two columns) and high-fidelity (the right two columns) solutions by using $r=10$ at $t=0.1$ for $\varepsilon=1$ ($N_x^H=128$, $N_x^L=32$).}\label{E1N32N128fig3}
	\end{figure}
	
	\begin{figure}[hp]
		\centering
		\includegraphics[width=14cm]{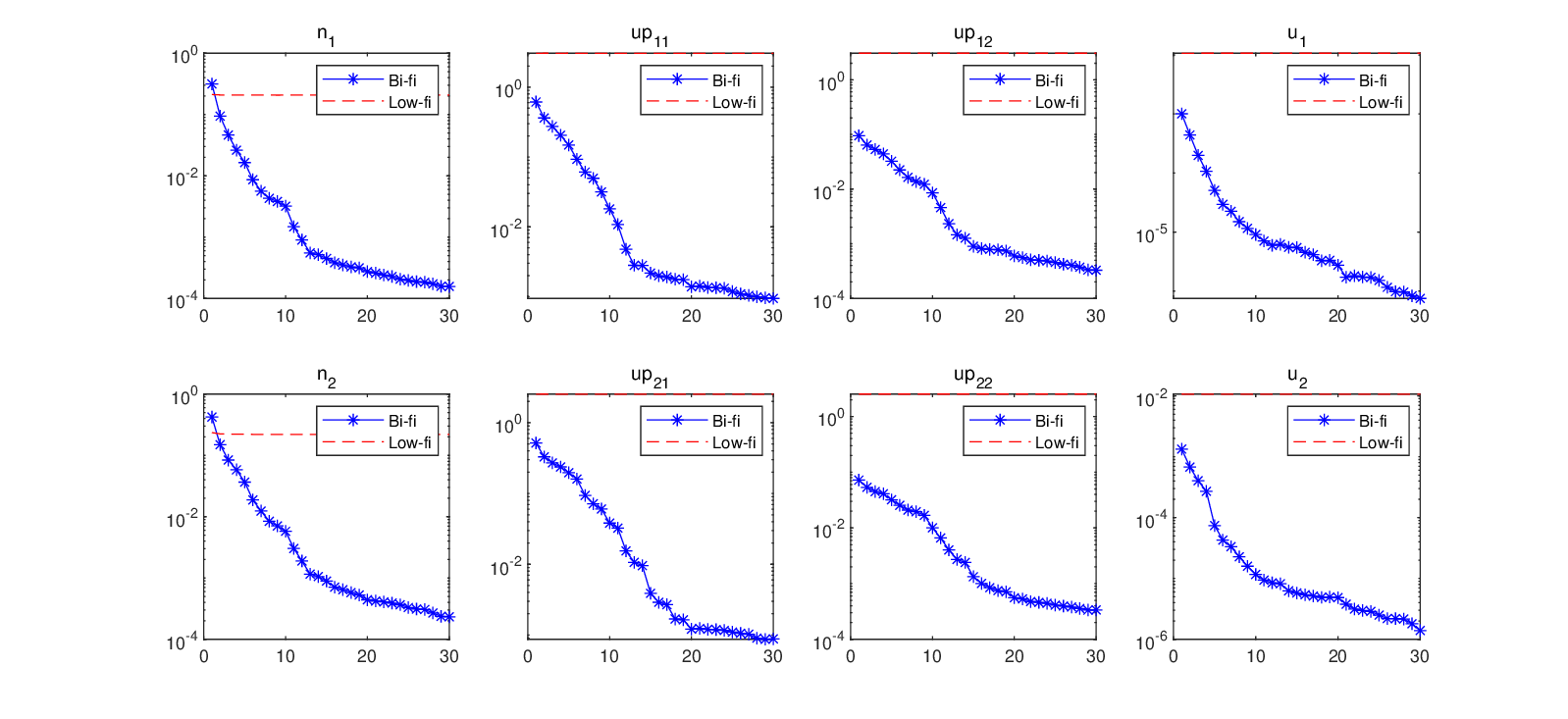}\\
		\caption{The mean $L^2$ errors between the high-fidelity and low- or bi- fidelity approximation  for particle density $n_{1}, n_{2}$ (first column), each component of particle velocity $u_{p,11}, u_{p,12}, u_{p,21}, u_{p,22}$ (second and third column) and fluid velocity $\mathbf{u}$ (fourth column) at $t=0.1$ for $\varepsilon=1$ ($N_x^H=128$, $N_x^L=32$). }\label{E1N32N128fig4}
	\end{figure}

	\begin{figure}[hp]
		\centering
		\includegraphics[width=14cm]{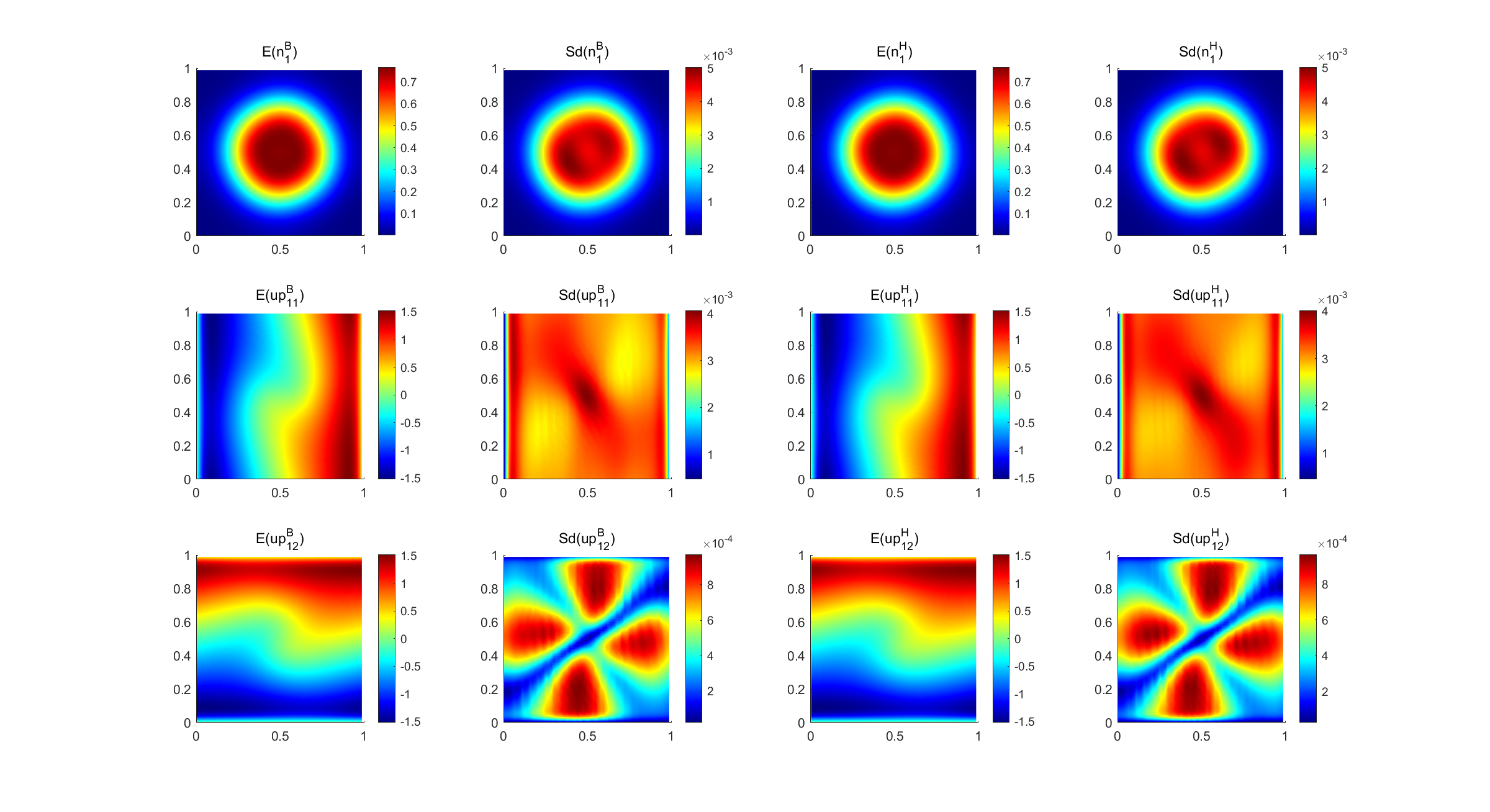}\\
		\caption{Mean and standard deviation of particle density $n_1$ and streamlines of particle velocity $u_{p,11}$, $u_{p,12}$ for bi-fidelity and high-fidelity solutions by using $r=10$ at $t=0.1$ for $\varepsilon=1$ ($N_x^H=128$, $N_x^L=64$).}\label{E1N64N128fig1}
	\end{figure}
	
	\begin{figure}[hp]
		\centering
		\includegraphics[width=14cm]{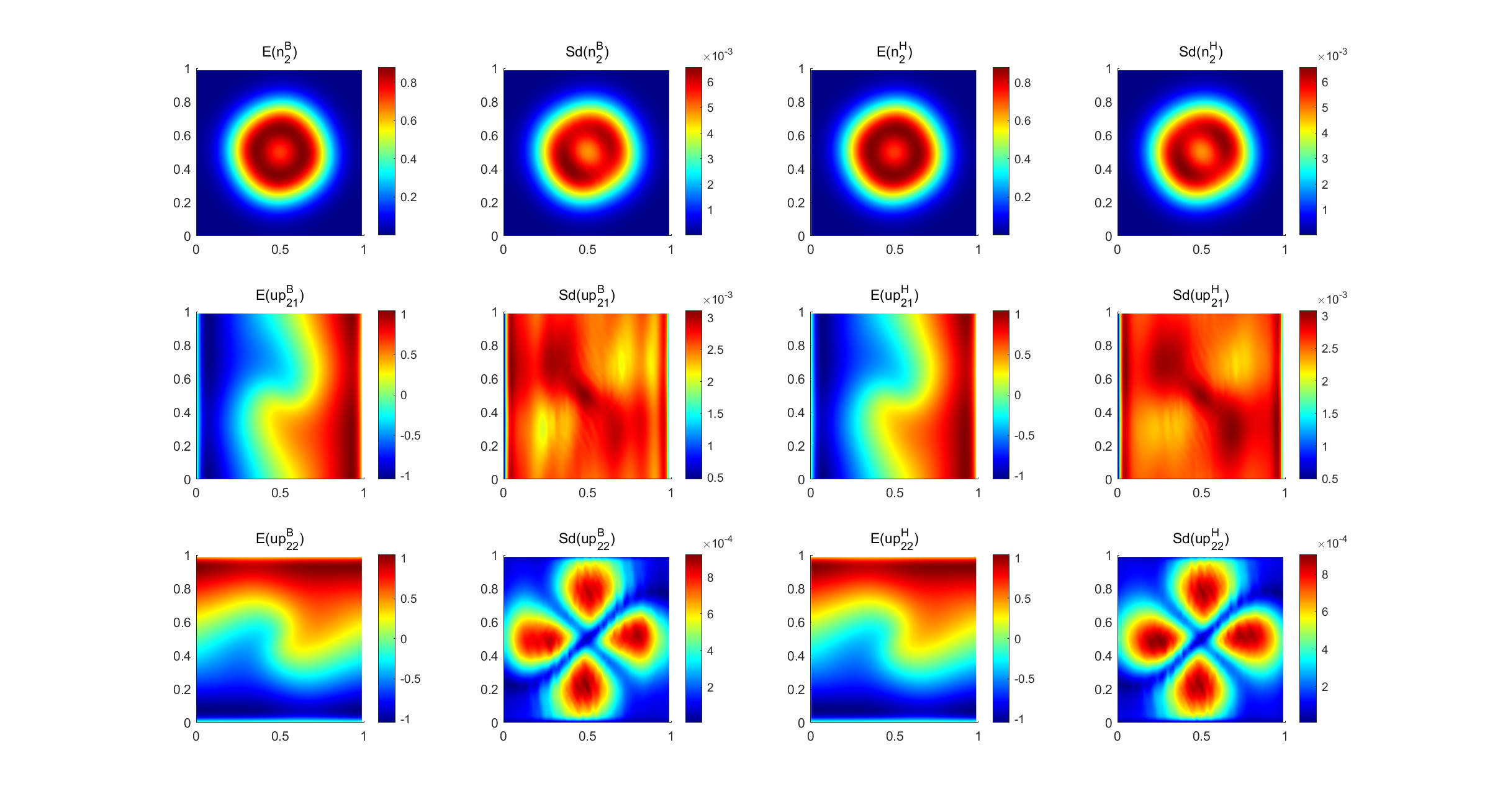}\\
		\caption{Mean and standard deviation of particle density $n_2$ and streamlines of particle velocity $u_{p,21}$, $u_{p,22}$ for bi-fidelity (the left two columns) and high-fidelity (the right two columns) solutions by using $r=10$ at $t=0.1$  for $\varepsilon=1$ ($N_x^H=128$, $N_x^L=64$).}\label{E1N64N128fig2}
	\end{figure}
	
	\begin{figure}[hp]
		\centering
		\includegraphics[width=14cm]{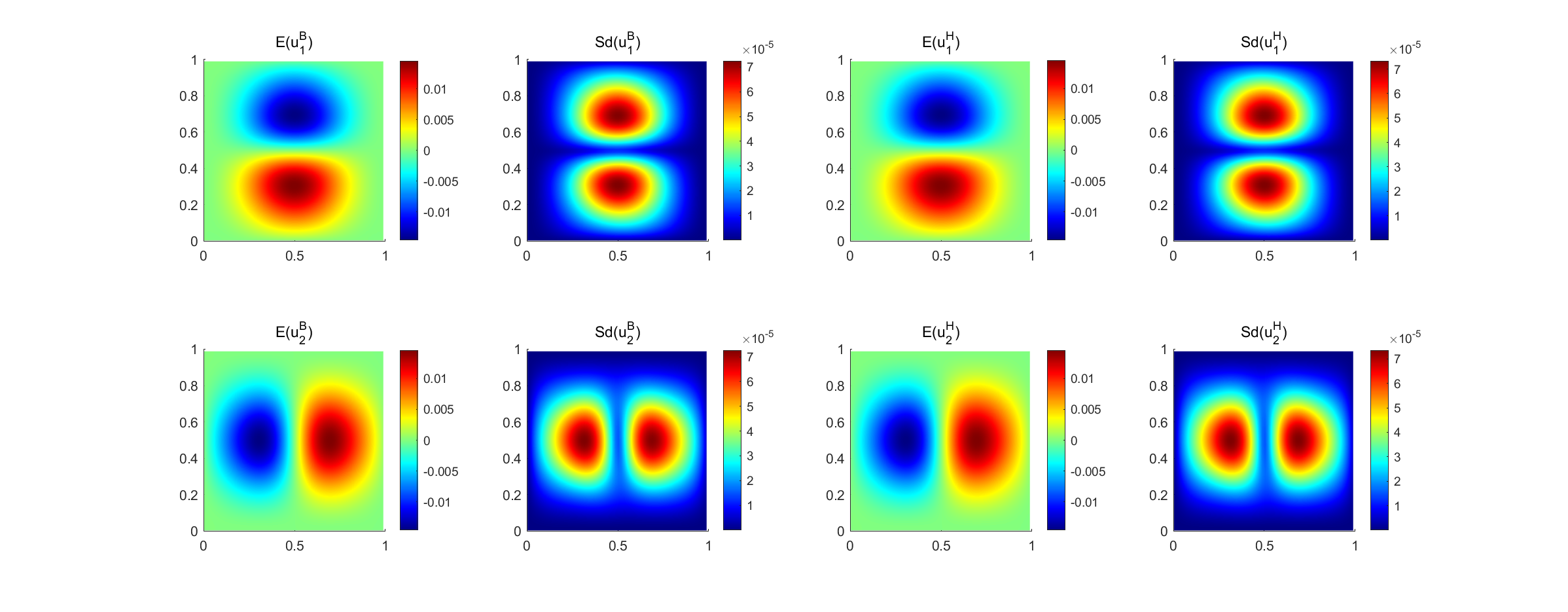}\\
		\caption{Mean and standard deviation of fluid velocity $u_1$, $u_2$ for bi-fidelity and high-fidelity solutions by using $r=10$ at $t=0.1$   for $\varepsilon=1$ ($N_x^H=128$, $N_x^L=64$).}\label{E1N64N128fig3}
	\end{figure}
	
	\begin{figure}[hp]
		\centering
		\includegraphics[width=14cm]{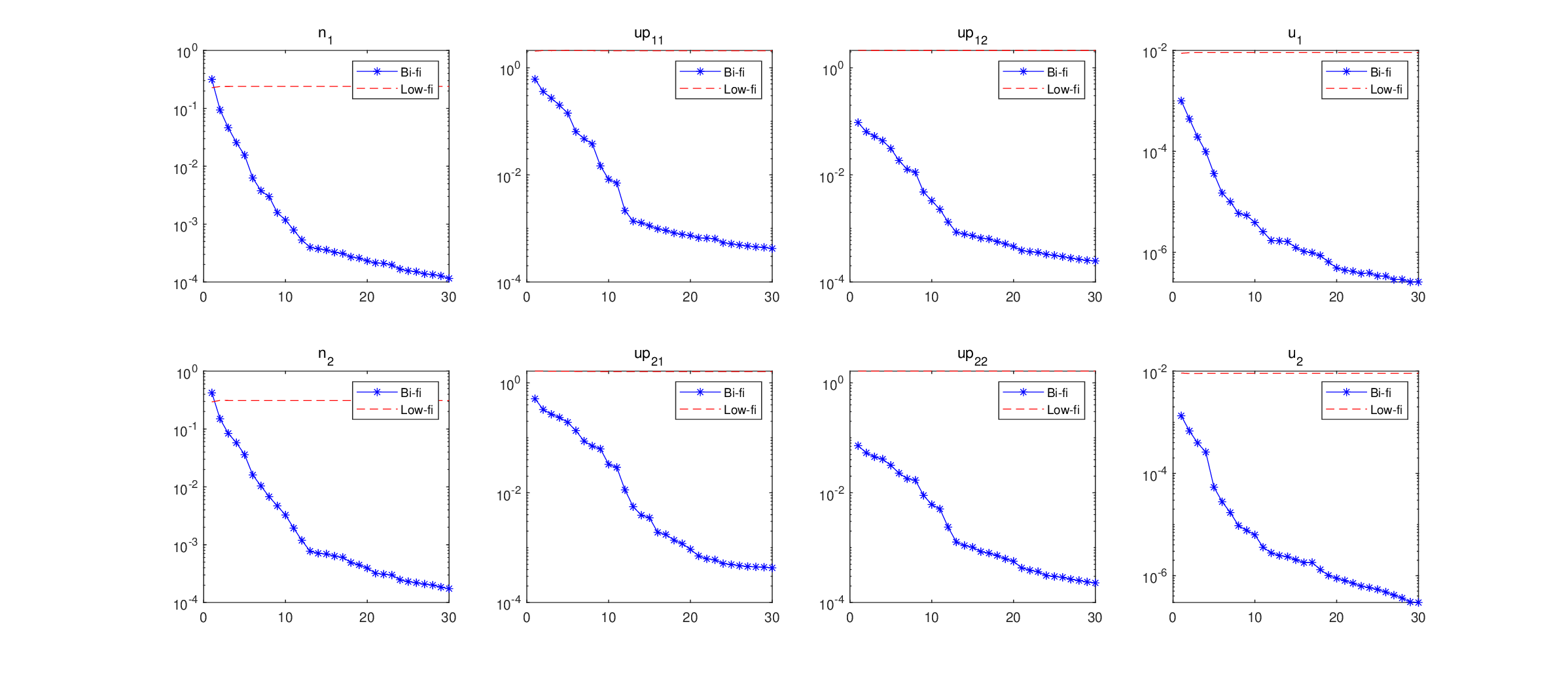}\\
		\caption{The mean $L^2$ errors between the high-fidelity and low- or bi- fidelity approximation  for particle density $n_{1}, n_{2}$ (first column), each component of particle velocity $u_{p,11}, u_{p,12}, u_{p,21}, u_{p,22}$ (second and third column) and fluid velocity $\mathbf{u}$ (fourth column) at $t=0.1$ for $\varepsilon=1$ ($N_x^H=128$, $N_x^L=64$). }\label{E1N64N128fig4}
	\end{figure}

	Figs. \ref{E1N64N128fig1} - \ref{E1N64N128fig3} show the  mean and standard deviation of the bi-fidelity solutions with  $N_x^L=64$ grid points for low-fidelity models with $\varepsilon=1$ at $t=0.1$. Similar to Figs. \ref{E1N32N128fig1} - \ref{E1N32N128fig3},
	velocities of particle and fluid are quite different. 
	because the drag force between the fluid phase and particle phases is not strong. The behavior turns out to be significantly different as $\varepsilon$ decreases. 
	Errors between the high-fidelity and low- or bi- fidelity approximations for particle density $n_1, n_2$, each component of particle velocity $u_{p,11}, u_{p,12}, u_{p,21}, u_{p,22}$ and fluid velocity $u_1, u_2$ are shown in Fig. \ref{E1N64N128fig4}. It is clear that the error decays faster with  bigger K. 
	Compared to Fig. \ref{E1N32N128fig4}, one observes that when $N_x^L$ increases, the error between the high- and bi-fidelity solutions decreases. This is expected because the multi-phases kinetic-fluid system solved by more spatial points can capture more information.

	\begin{figure}[hbp]
		\centering
		\includegraphics[width=14cm]{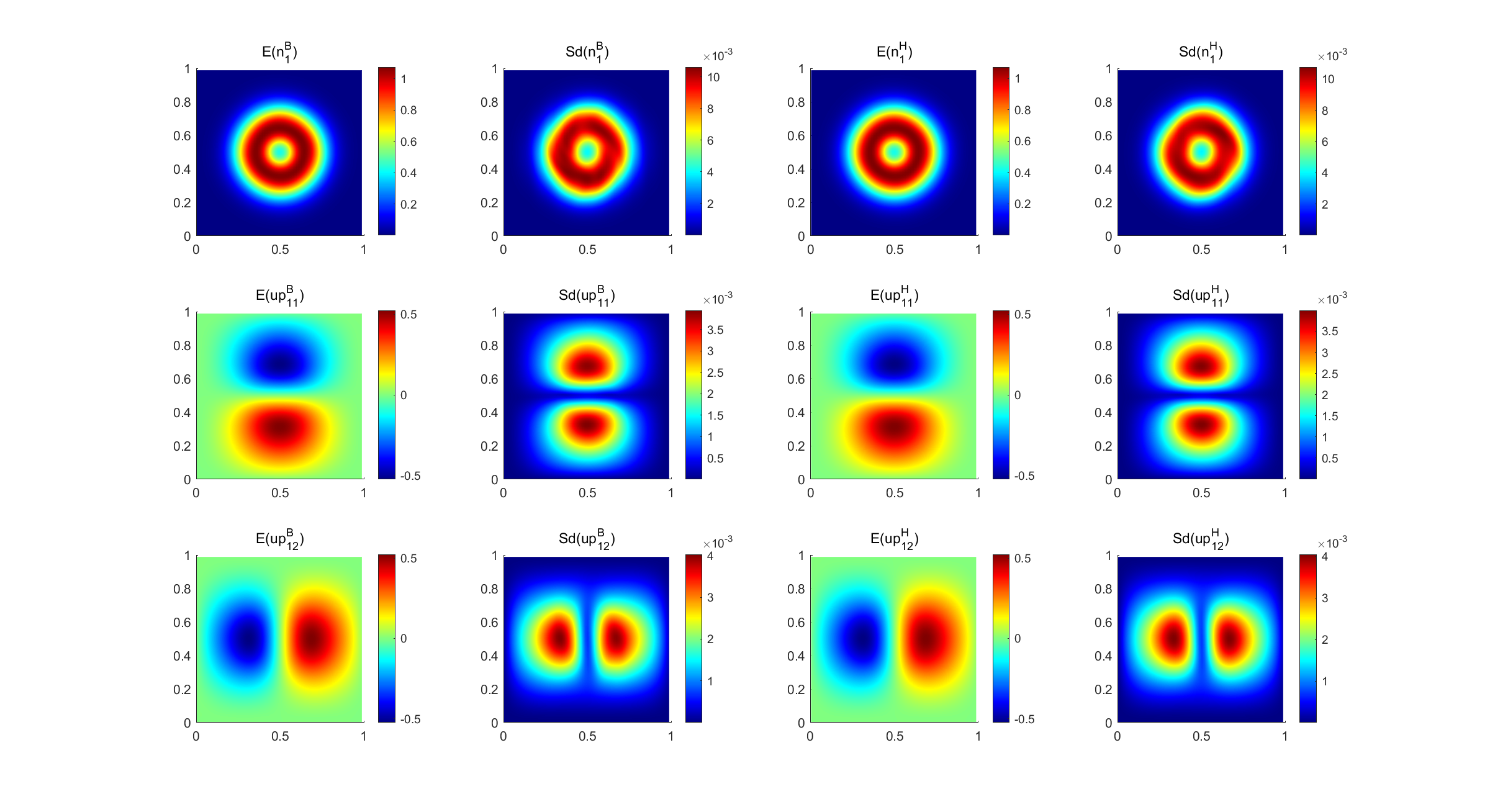}\\
		\caption{Mean and standard deviation of particle density $n_1$ and two components of particle velocity $up_{11}$, $up_{12}$ for bi-fidelity and high-fidelity solutions by using $r=10$ at $t=0.1$  for $\varepsilon=10^{-5}$ ($N_x^H=128$, $N_x^L=32$).}\label{E5N32N128fig1}
	\end{figure}
	
	\begin{figure}[htbp]
		\centering
		\includegraphics[width=14cm]{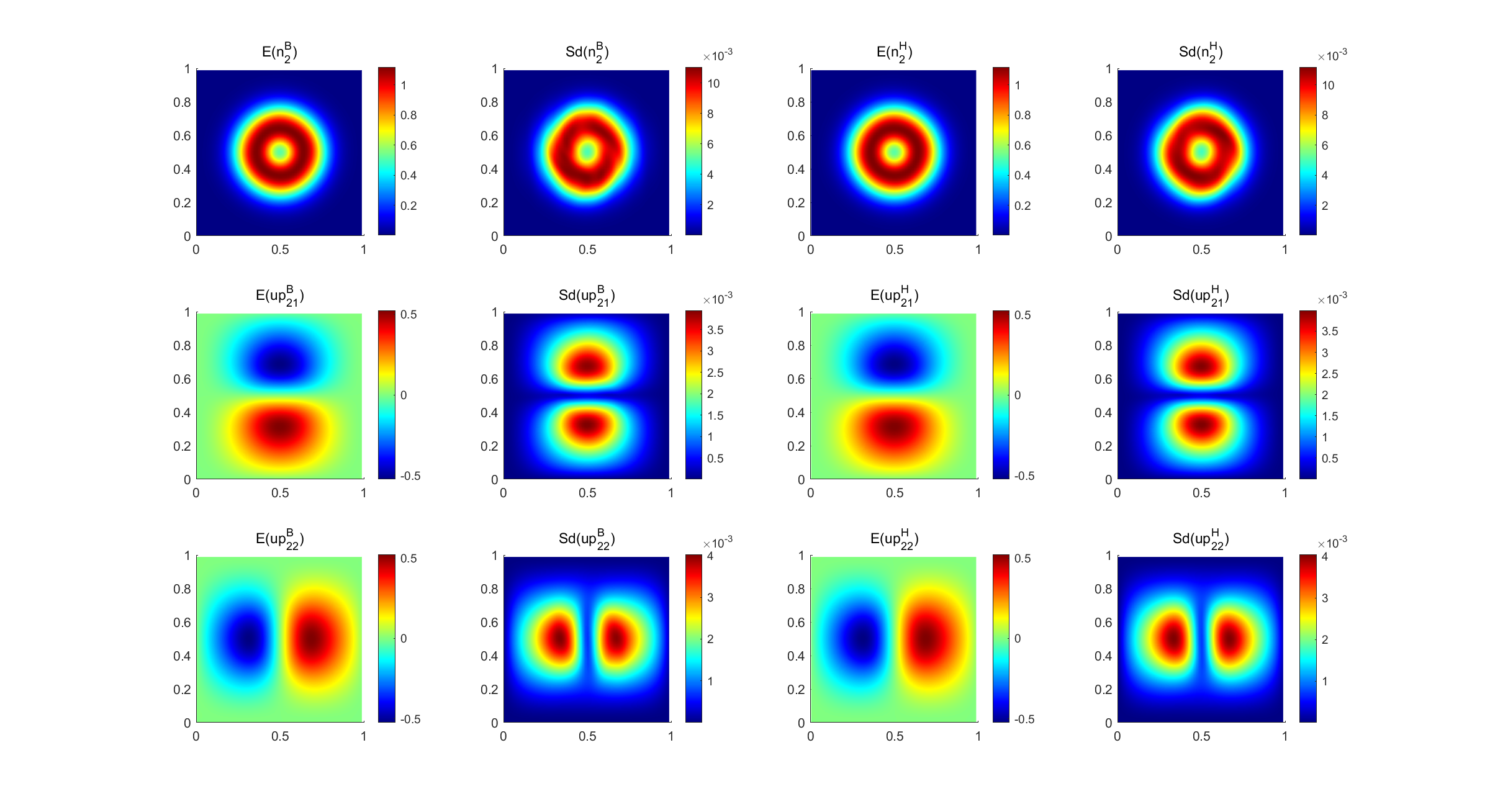}\\
		\caption{Mean and standard deviation of particle density $n_2$ and two components of particle velocity $up_{21}$, $up_{22}$ for bi-fidelity and high-fidelity solutions by using $r=10$ at $t=0.1$ for $\varepsilon=10^{-5}$ ($N_x^H=128$, $N_x^L=32$).}\label{E5N32N128fig2}
	\end{figure}
	
	\begin{figure}[htbp]
		\centering
		\includegraphics[width=14cm]{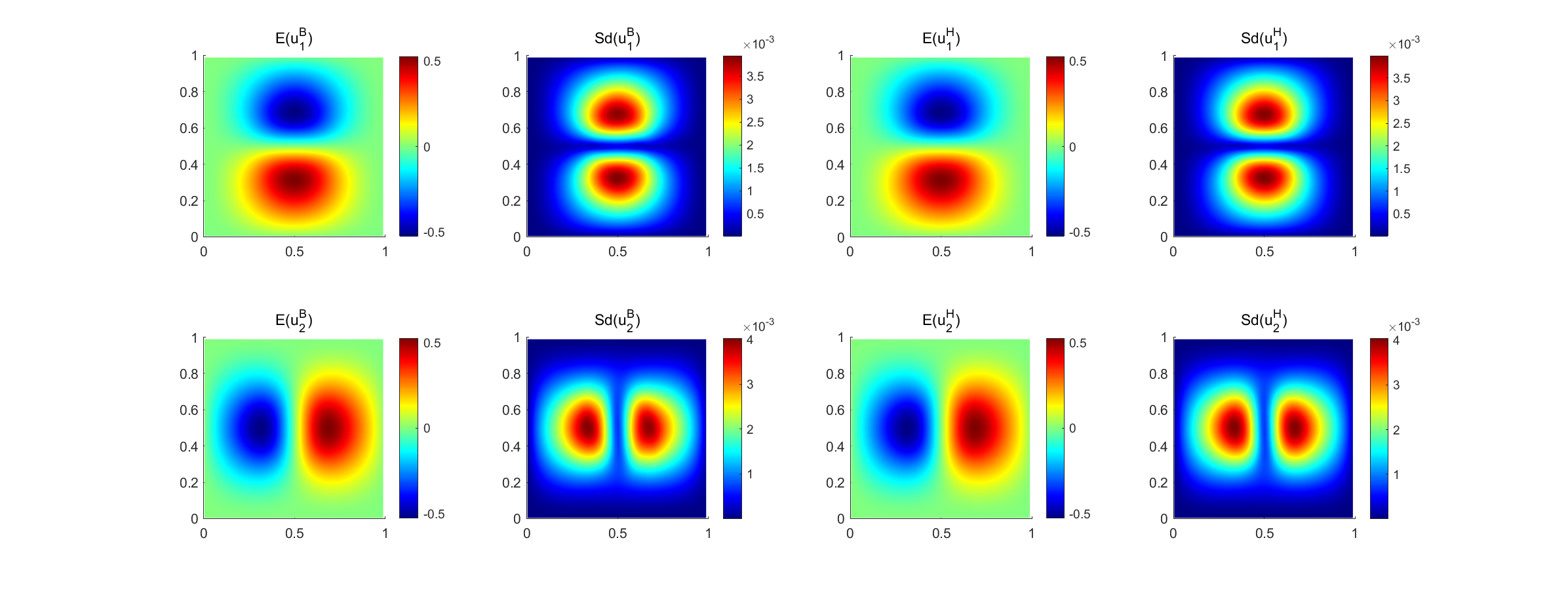}\\
		\caption{Mean and standard deviation of fluid velocity $u_1$, $u_2$ for bi-fidelity and high-fidelity solutions by using $r=10$ at $t=0.1$ for $\varepsilon=10^{-5}$ ($N_x^H=128$, $N_x^L=32$).}\label{E5N32N128fig3}
	\end{figure}
	
	\begin{figure}[htbp]
		\centering
		\includegraphics[width=14cm]{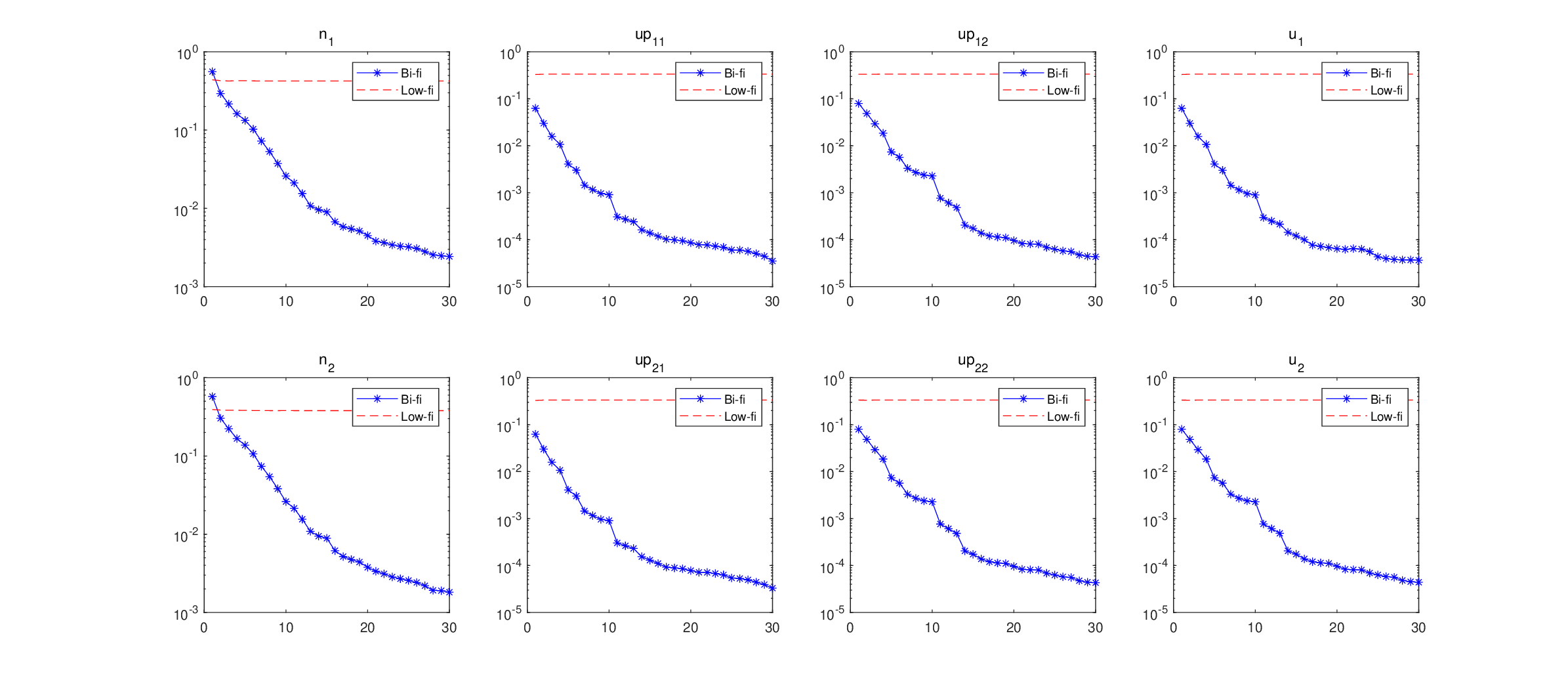}\\
		\caption{The mean $L^2$ errors between the high-fidelity and low- or bi- fidelity approximation for particle density $n_{1}, n_{2}$ (first column), each component of particle velocity $u_{p,11}, u_{p,12}, u_{p,21}, u_{p,22}$ (second and third column) and fluid velocity $\mathbf{u}$ (fourth column) at $t=0.1$ for $\varepsilon=10^{-5}$ ($N_x^H=128$, $N_x^L=32$). }\label{E5N32N128fig4}
	\end{figure}

	\begin{figure}[htbp]
		\centering
		\includegraphics[width=14cm]{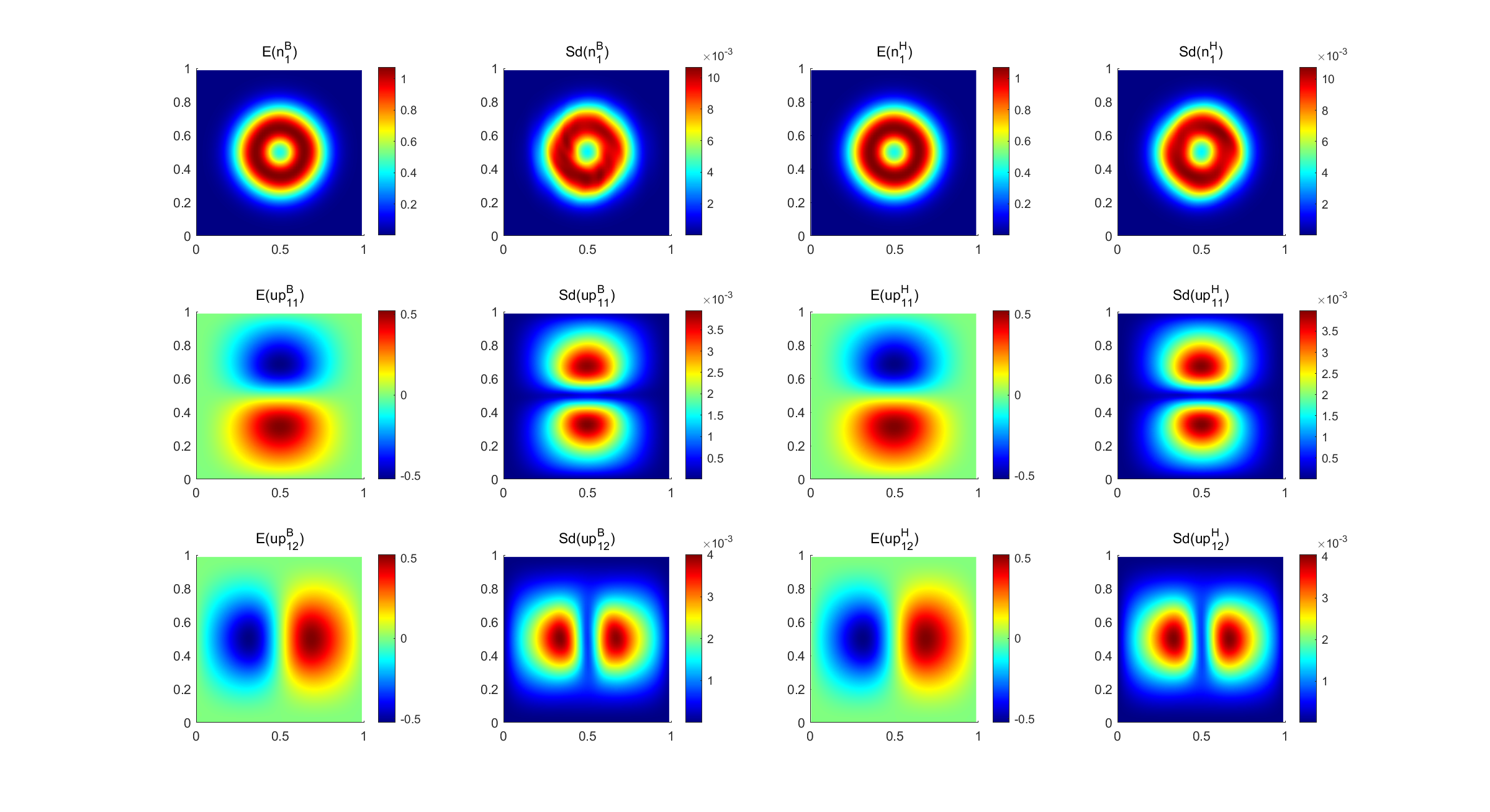}\\
		\caption{Mean and standard deviation of particle density $n_1$ and two components of particle velocity $up_{11}$, $up_{12}$ for bi-fidelity (the left two columns) and high-fidelity (the right two columns) solutions by using $r=10$ at $t=0.1$ for $\varepsilon=10^{-5}$ ($N_x^H=128$, $N_x^L=64$).}\label{E5N64N128fig1}
	\end{figure}
	
	\begin{figure}[htbp]
		\centering
		\includegraphics[width=14cm]{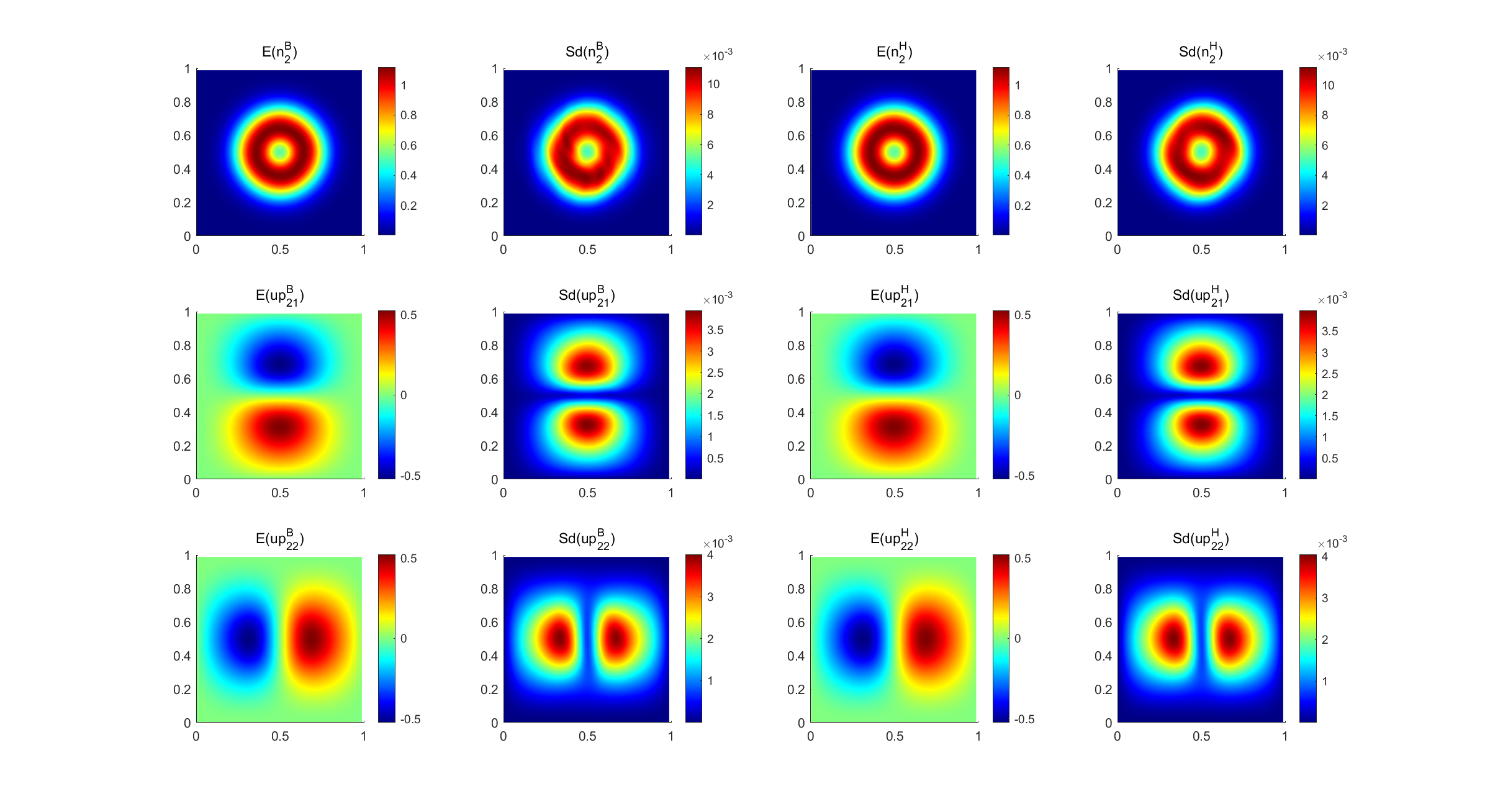}\\
		\caption{Mean and standard deviation of particle density $n_2$ and two components of particle velocity $up_{21}$, $up_{22}$ for bi-fidelity (the left two columns)  and high-fidelity (the right two columns) solutions by using $r=10$ at $t=0.1$ for $\varepsilon=10^{-5}$ ($N_x^H=128$, $N_x^L=64$).}\label{E5N64N128fig2}
	\end{figure}
	
	\begin{figure}[htbp]
		\centering
		\includegraphics[width=14cm]{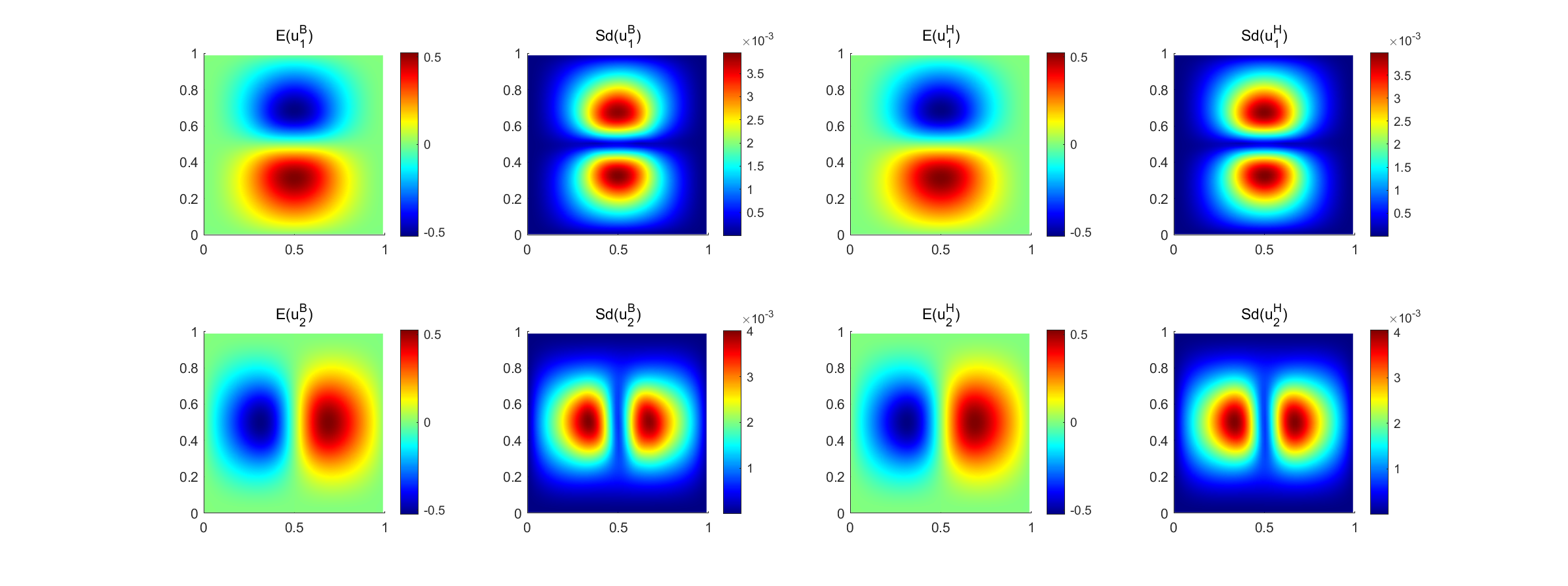}\\
		\caption{Mean and standard deviation of fluid velocity $u_1$, $u_2$ for bi-fidelity (the left two columns) and high-fidelity (the right two columns) solutions by using $r=10$ at $t=0.1$ for $\varepsilon=10^{-5}$ ($N_x^H=128$, $N_x^L=64$).}\label{E5N64N128fig3}
	\end{figure}
	
	\begin{figure}[htbp]
		\centering
		\includegraphics[width=14cm]{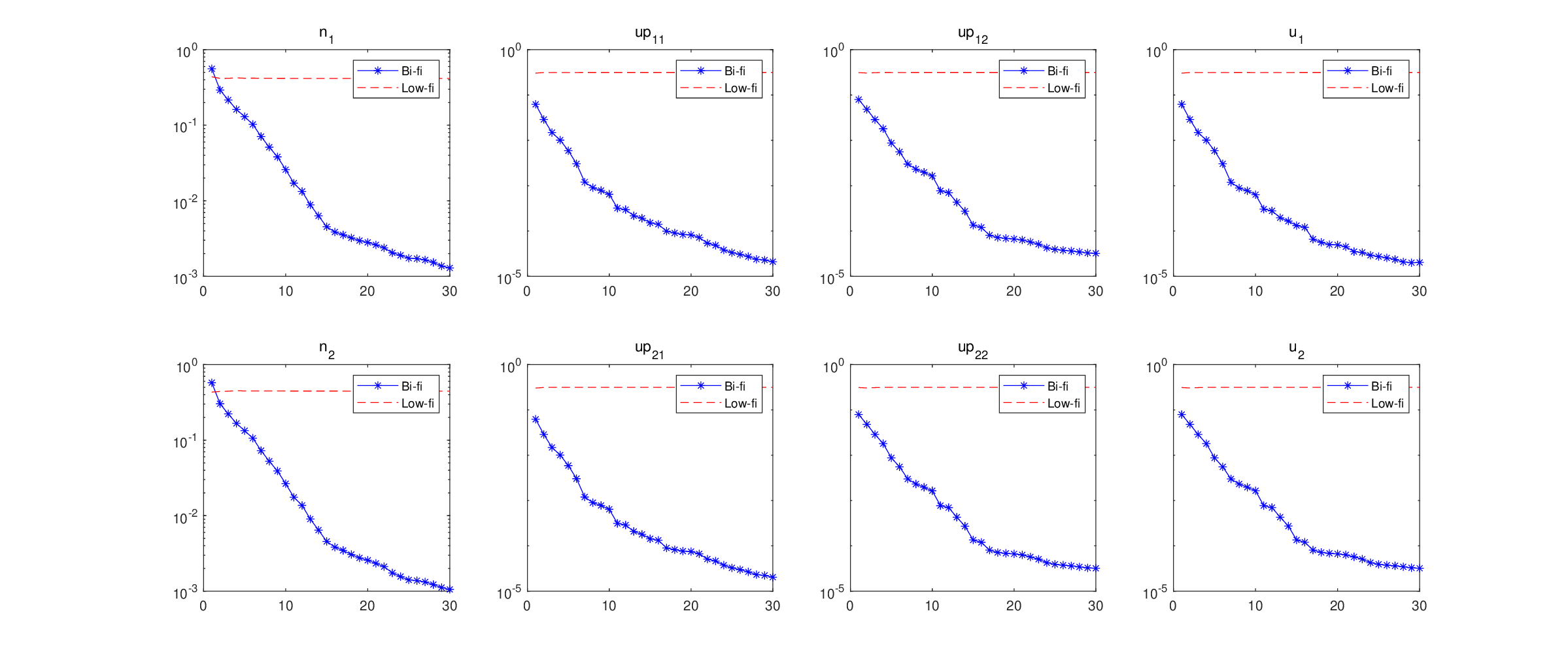}\\
		\caption{The mean $L^2$ errors between the high-fidelity and low- or bi- fidelity approximation for particle density $n_{1}, n_{2}$ (first column), each component of particle velocity $u_{p,11}, u_{p,12}, u_{p,21}, u_{p,22}$ (second and third column) and fluid velocity $\mathbf{u}$ (fourth column) at $t=0.1$ for $\varepsilon=10^{-5}$ ($N_x^H=128$, $N_x^L=64$). }\label{E5N64N128fig4}
	\end{figure}

	For the initial data \eqref{ex1} with $\varepsilon=10^{-5}$, see Figs. \ref{E5N32N128fig1} - \ref{E5N64N128fig4} for numerical results. 
	Figs \ref{E5N32N128fig1} - \ref{E5N32N128fig3} show the mean and standard deviation of the bi-fidelity solutions of particle density $n_1, n_2$ and each component of particle velocity $u_{p,11}, u_{p,12}, u_{p,21}, u_{p,22}$ and fluid velocity $u_1, u_2$ by using the high-fidelity solver 10 times at $t=0.1$, with $N_x^H=128$ grid points for high-fidelity models and $N_x^L=32$ grid points for low-fidelity models.
	Figs. \ref{E5N64N128fig1} - \ref{E5N64N128fig3} show the mean and standard deviation of the bi-fidelity solutions with $N_x^H=128$ grid points for high-fidelity models and $N_x^L=64$ grid points for low-fidelity models. 
	They all match quite well with the high-fidelity solutions. 
	In this case with smaller $\varepsilon$, the particles stop expanding immediately due to the strong drag force. The particles keep the volcano shape well in this period of time.
	A fast convergence of the errors given by \eqref{eq:err} with respect to the number of high-fidelity runs is clearly observed from Fig. \ref{E5N32N128fig4} and Fig. \ref{E5N64N128fig4}.
	In conclusion, a satisfactory accuracy in characterizing the behaviors of the solution in the random space is achieved in both cases: $N^L_x = 64$ and $N^L_x = 32$; and the saving of the computational cost in this test is quite remarkable.

	To further illustrate the performance of our bi-fidelity method, we compare the high-, low- and the corresponding bi-fidelity solutions (with $r=10$) for a particular sample point $z$. Figs \ref{E5N32N128fig6} - \ref{E5N32N128fig8} show the errors between the high-fidelity and low- or bi- fidelity approximation for macroscopic quantities including particle density $n_1$, particle velocity $u_{p,11}, u_{p,12}$ of the first particle $(i=1)$ in Fig. \ref{E5N32N128fig6},  those of the second particle $(i=2)$ in Fig. \ref{E5N32N128fig7} and fluid velocity in Fig. \ref{E5N32N128fig8} with $\varepsilon=10^{-5}$ at $t=0.1$, with $N_x^H = 128$ grid points for high-fidelity models and $N_x^L = 32$ grid points for low-fidelity models.
	With only 10 high-fidelity runs, one observes that the high- and bi-fidelity solutions match really well, whereas the low-fidelity solutions seem to be quite inaccurate at some points in the spatial domain. 
	This example shows the accuracy and efficiency of our bi-fidelity method.

	\begin{figure}[htbp]
		\centering
		\includegraphics[width=12cm]{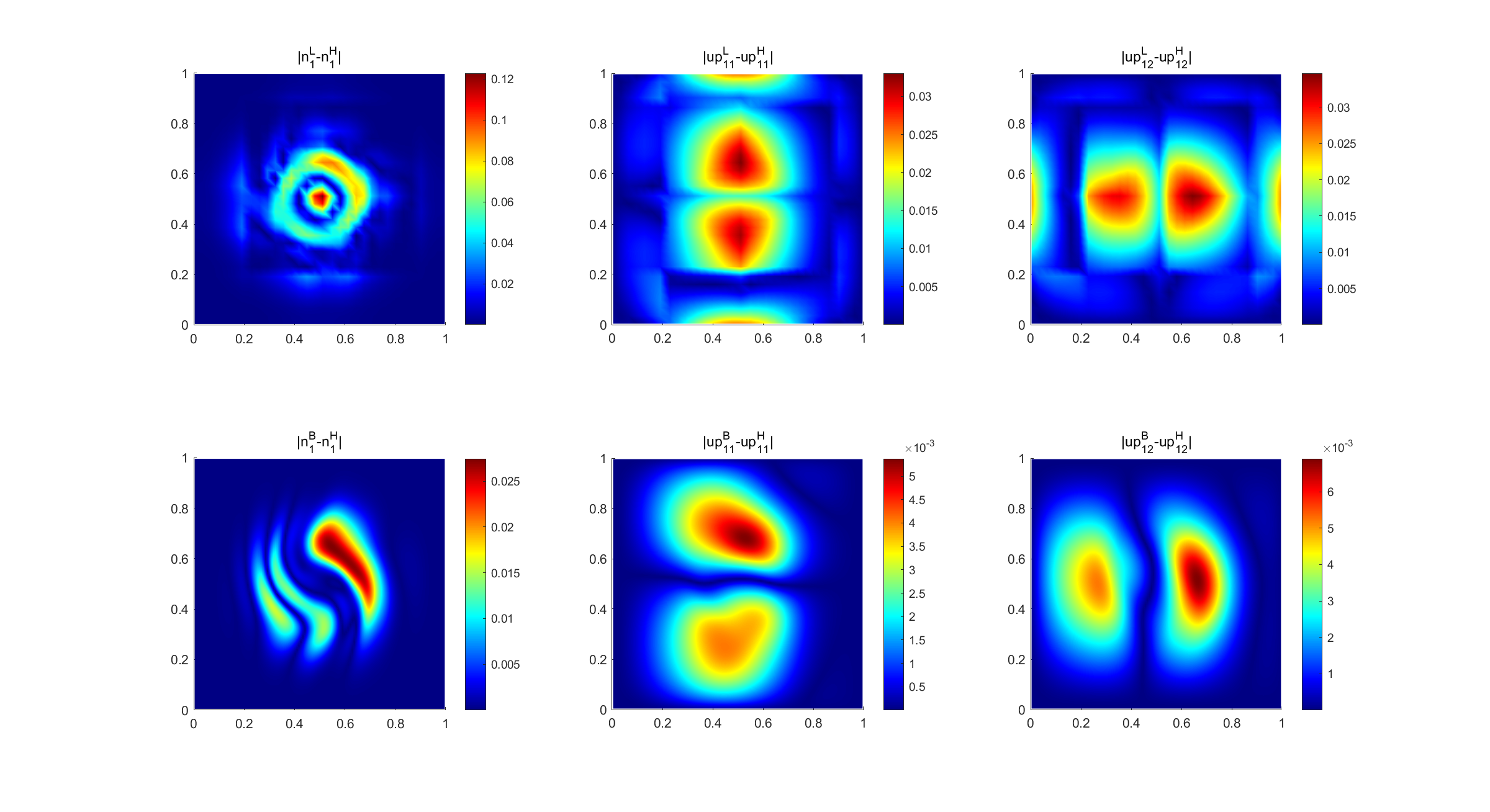}\\
		\caption{Errors for particle density $n_1$ (first column), two components of particle velocity $u_{p,11}, u_{p,12}$ (second and third column) for the first particle ($i=1$) at $t=0.1$ for $\varepsilon=10^{-5}$ ($N_x^H = 128, N_x^L = 32$). The first line is error  between the low- and high-fidelity solution. The second line is error between the bi- and high-fidelity solution.}\label{E5N32N128fig6}
	\end{figure}
	
	\begin{figure}[htbp]
		\centering
		\includegraphics[width=12cm]{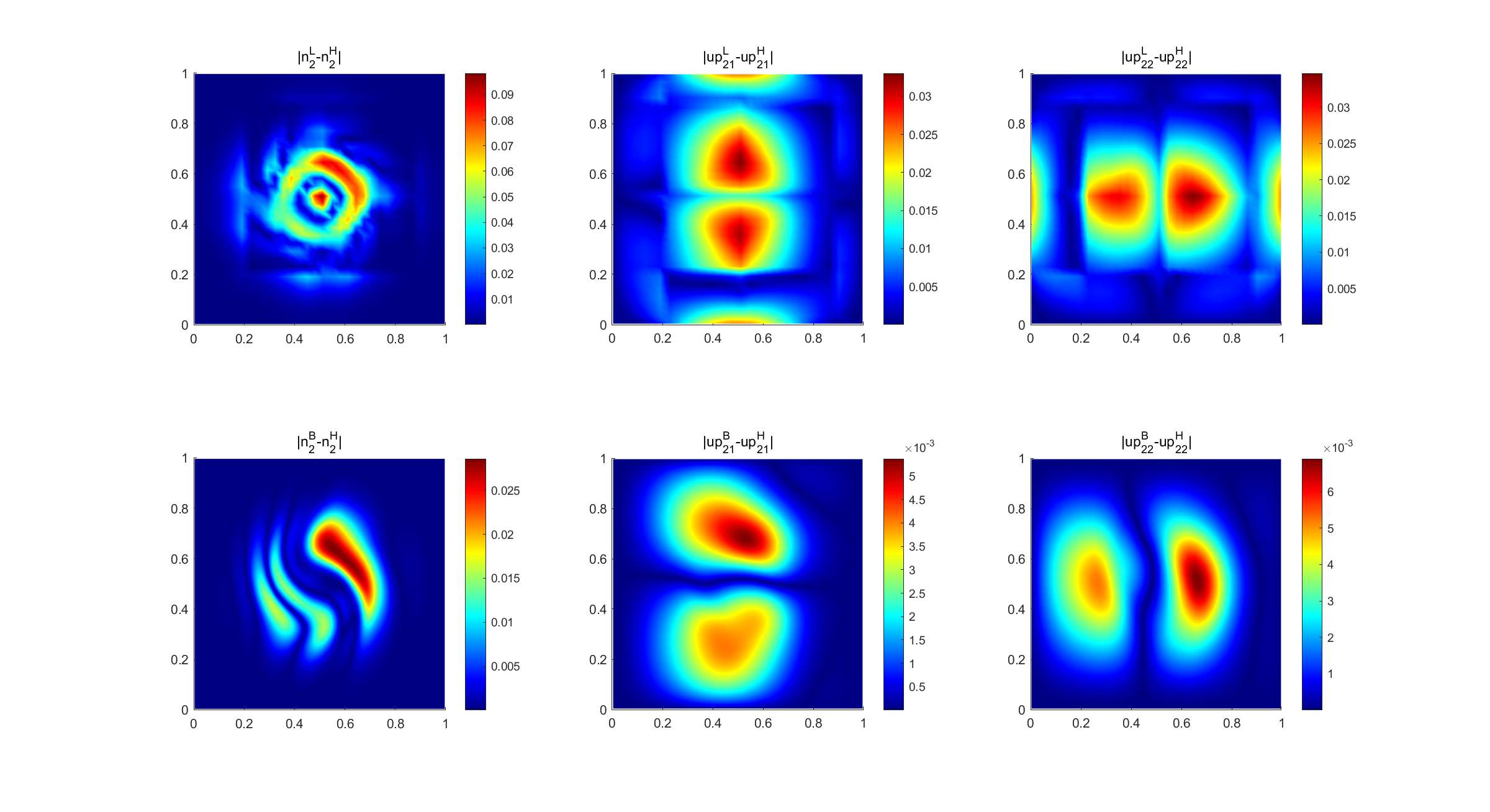}\\
		\caption{Errors for particle density $n_2$(first column), two components of particle velocity $u_{p,21}, u_{p,22}$ (second and third column) for the second particle ($i=2$) at $t=0.1$ for $\varepsilon=10^{-5}$ ($N_x^H = 128, N_x^L = 32$). The first line is error  between the low- and high-fidelity solution. The second line is error between the bi- and high-fidelity solution.} \label{E5N32N128fig7}
	\end{figure}
	
	\begin{figure}[htbp]
		\centering
		\includegraphics[width=10cm]{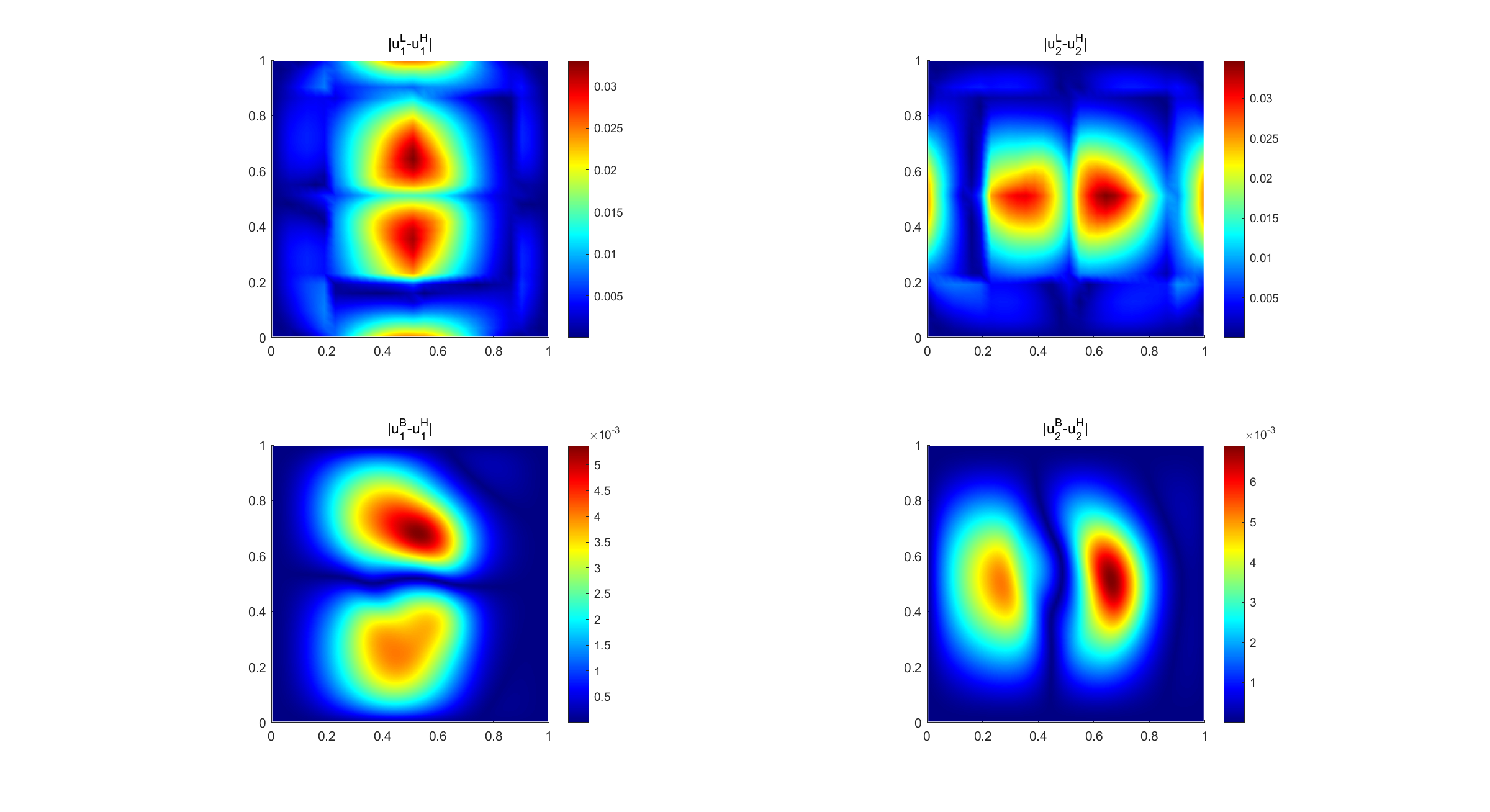}\\
		\caption{Errors for two components of fluid velocity $u_1, u_2$ at $t=0.1$ for $\varepsilon=10^{-5}$ ($N_x^H = 128, N_x^L = 32$). The first line is error  between the low- and high-fidelity solution. The second line is error between the bi- and high-fidelity solution.}\label{E5N32N128fig8}
	\end{figure}

	\begin{figure}[htbp]
		\centering
		\includegraphics[width=12cm]{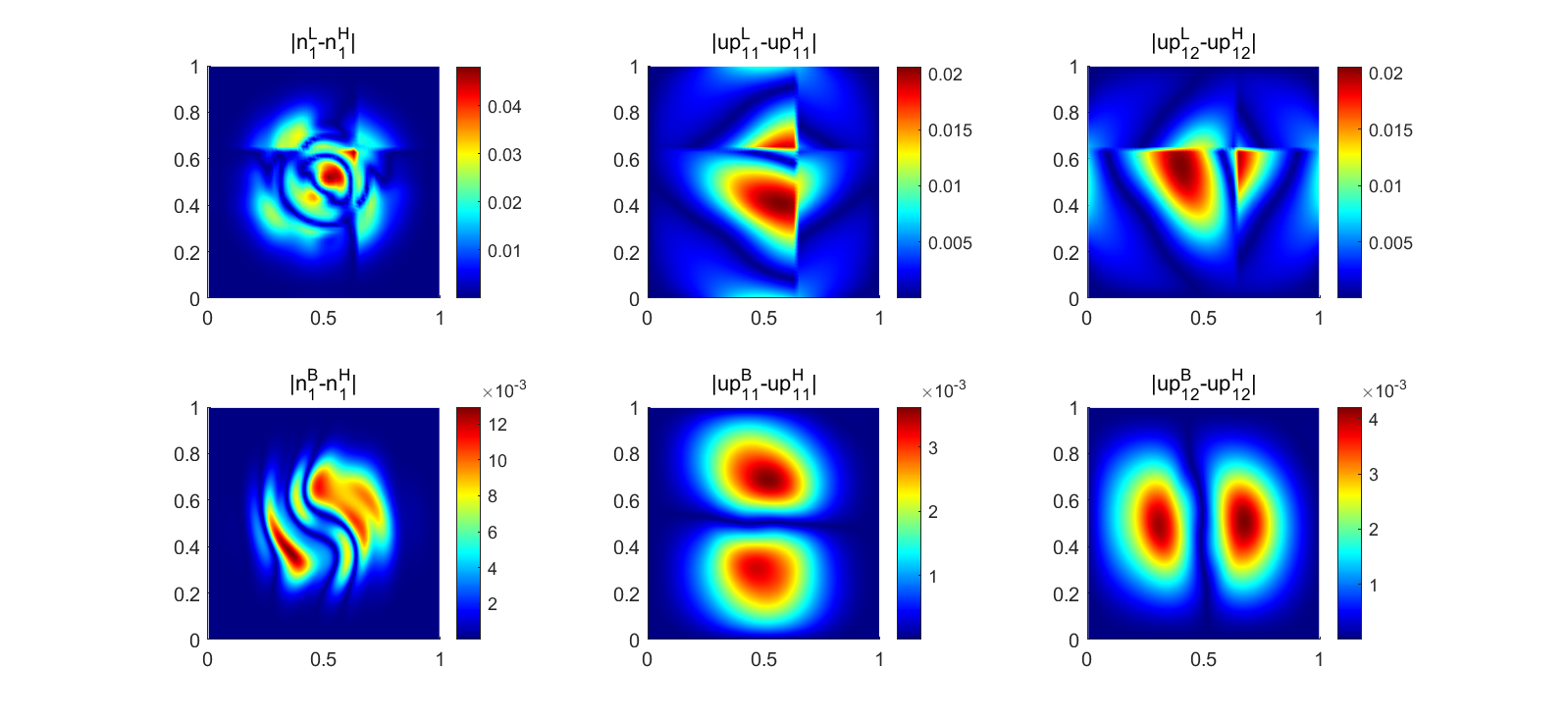}\\
		\caption{Errors for particle density $n_1$ (first column), two components of particle velocity $u_{p,11}, u_{p,12}$ (second and third column) for the first particle ($i=1$) at $t=0.1$ for $\varepsilon=10^{-5}$ ($N_x^H = 128, N_x^L = 64$). The first line is error  between the low- and high-fidelity solution. The second line is error between the bi- and high-fidelity solution.}\label{E5N64N128fig6}
	\end{figure}
	
	\begin{figure}[htbp]
		\centering
		\includegraphics[width=12cm]{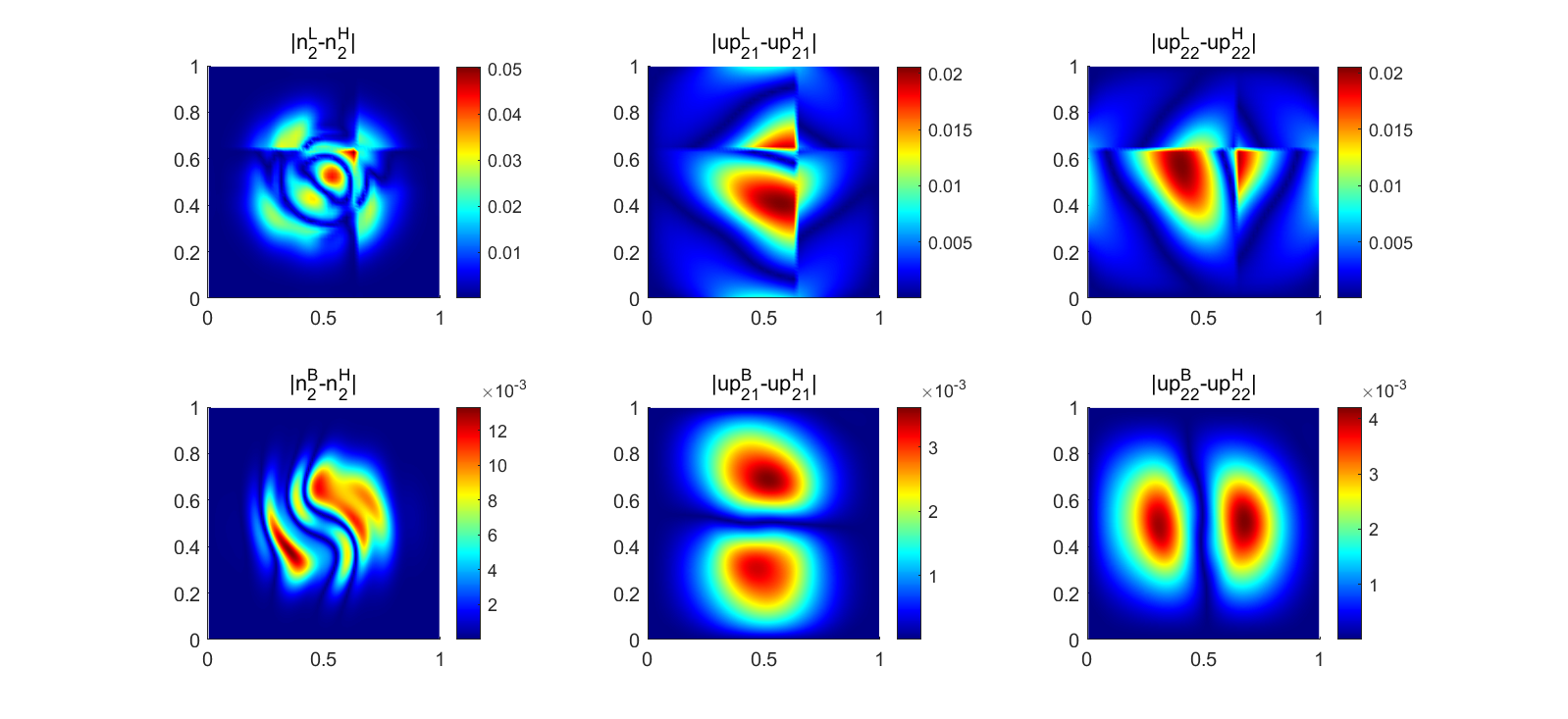}\\
		\caption{Errors for particle density $n_2$ (first column), two components of particle velocity $u_{p,21}, u_{p,22}$ (second and third column) for the second particle ($i=2$) at $t=0.1$ for $\varepsilon=10^{-5}$ ($N_x^H = 128, N_x^L = 64$). The first line is error  between the low- and high-fidelity solution. The second line is error between the bi- and high-fidelity solution.} \label{E5N64N128fig7}
	\end{figure}
	
	\begin{figure}[htbp]
		\centering
		\includegraphics[width=10cm]{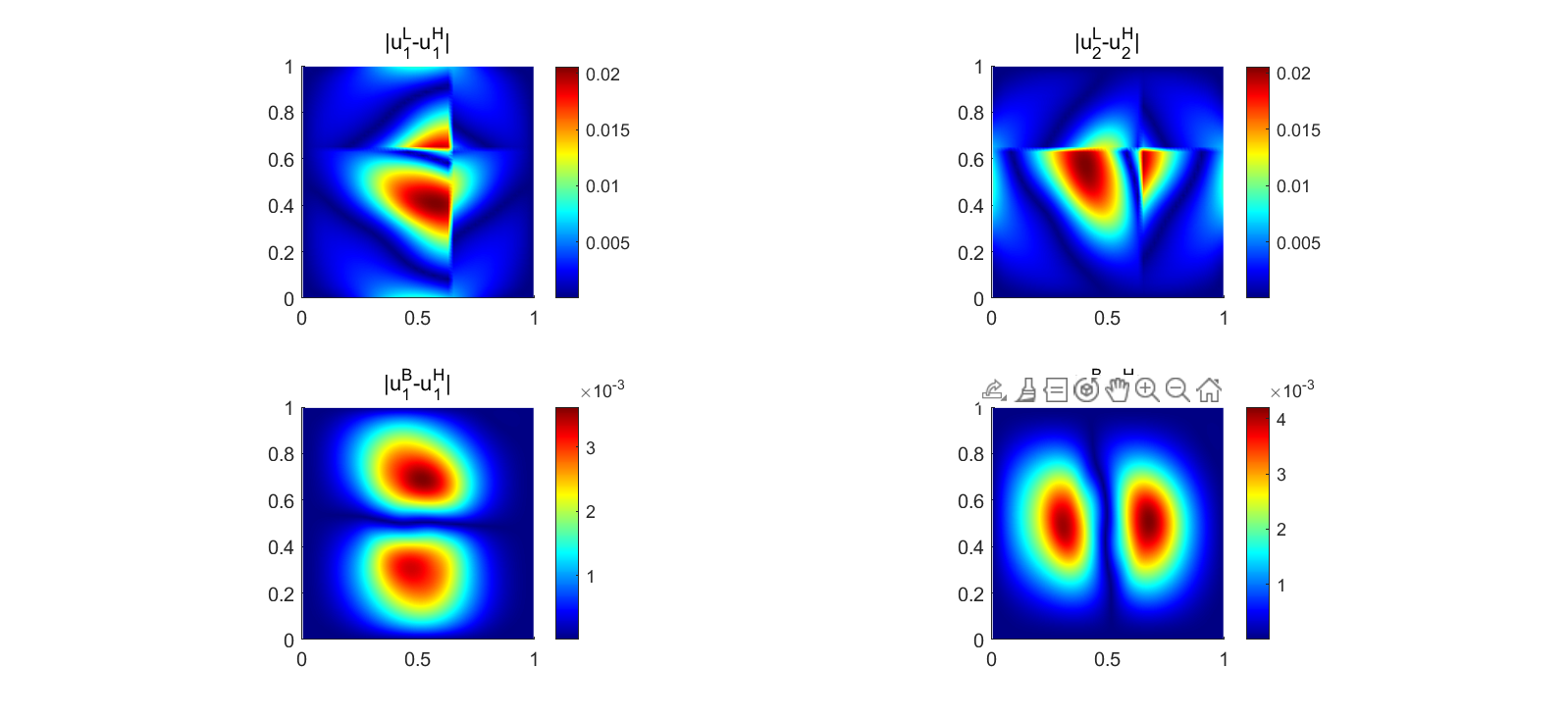}\\
		\caption{Errors for two components of fluid velocity $u_1, u_2$ at $t=0.1$ for $\varepsilon=10^{-5}$ ($N_x^H = 128, N_x^L = 64$). The first line is error  between the low- and high-fidelity solution. The second line is error between the bi- and high-fidelity solution.}\label{E5N64N128fig8}
	\end{figure}

	Figs. \ref{E5N64N128fig6}-\ref{E5N64N128fig8} show the errors between the high-fidelity and low- or bi- fidelity approximation for macroscopic quantities with $N_x^L = 64$ grid points fo low-fidelity models with $\varepsilon=10^{-5}$.
	From Figs. \ref{E5N64N128fig6}-\ref{E5N64N128fig8}, we see a good convergence of errors between the high- and bi-fidelity solutions. 
	One can get the bi-fidelity solutions which are able to capture behavior of the solutions to the multi-phase kinetic-fluid equations in the random space, up to an accuracy $\sim 10^{-3}$; on the other hand, using the low-fidelity model alone can not achieve this result.
	This observation certainly highlights the merits of our bi-fidelity method.

	\section{Conclusion}
	\label{sec:conclusion}
	
	In this paper, error estimates of the bi-fidelity method for solving multi-phase kinetic-fluid equations with random initial inputs are obtained. The result is uniform in the Stokes number. 
	The idea and analysis can be applied to other multiscale kinetic equations with uncertainties, as the regularity analysis in previous work is applicable to various types of linear or nonlinear kinetic equations. This work provides a range of low-fidelity models suitable for the bi-fidelity method, which allows effective handling of challenges posed by high-dimensionality, nonlinearity, coupling and randomness in computation when solving kinetic equations with random parameters.

	\section*{Acknowledgement}
	S. Jin was partially supported by National Key R\&D Program of China (no. 2020YFA0712000), the NSFC grant No. 12031013, and the Fundamental Research Funds for the Central Universities. 
	Y. Lin was partially supported by China Postdoctoral Science Foundation (no. 2021M702142) and National Natural Science Foundation of China (no. 12201404). The computations in this paper were run on the Siyuan-1 cluster supported by the Center for High Performance Computing at Shanghai Jiao Tong University.

	\section* {Appendix A.1 Proof of Lemma \ref{thm:energyestimate}}
	\begin{proof}
		Taking $z$-derivative of order $\gamma$ and $x$-derivative of order $\alpha$ of \eqref{eq:uf}, and taking $z$-derivative of order $\gamma$ of \eqref{eq:ubar}-\eqref{eq:ubar2} gives
		\begin{equation}\label{eq:ufgamma}
			\begin{aligned}
				&\partial_{t} \partial^{\alpha} f_i^{\gamma}+ v \cdot \nabla_{x} \partial^{\alpha} f_i^{\gamma}+\frac{i^{1/3}}{\bar{\theta}\epsilon}\left(\dfrac{\bar{\theta}}{i}\nabla_{v}-\frac{v}{2}\right) \cdot \partial^{\alpha}(u f_i)^{\gamma}
				\underbrace{-\frac{i^{1/3}}{\bar{\theta}\epsilon\delta} \partial^{\alpha} u^{\gamma} \cdot v \sqrt{\mu_i}}\\
				&\begin{aligned}
					=\underbrace{\frac{i^{1/3}}{\bar{\theta}\epsilon}\left(\frac{-|v|^{2}}{4}+\frac{3}{2}\dfrac{\bar{\theta}}{i}+\dfrac{\bar{\theta}^2}{i^2}\Delta_{v}\right) \partial^{\alpha} f_i^{\gamma}},\\
					\partial_{t} \partial^{\alpha} u^{\gamma}+\partial^{\alpha}\left(u \cdot \nabla_{x} u\right)^{\gamma}+\nabla_{x} \partial^{\alpha} p^{\gamma} \underline{-\Delta_{x} \partial^{\alpha} u^{\gamma}} \underbrace{+\dfrac{\kappa}{\epsilon}\sum_{i=1}^{N}i^{1/3}\partial^{\alpha} u^{\gamma}}+\dfrac{\kappa\delta}{\epsilon}\sum_{i=1}^{N}i^{1/3}\int \sqrt{\mu_i} \partial^{\alpha}(u f_i)^{\gamma} \mathrm{d} v\\ \underbrace{-\dfrac{\kappa\delta}{\epsilon}\sum_{i=1}^{N}i^{1/3} \int v \sqrt{\mu_i} \partial^{\alpha} f_i^{\gamma} \mathrm{d} v}=0, \end{aligned}\\
				&\nabla_{x} \cdot \partial^{\alpha} u^{\gamma}=0, \\
				& \partial_{t} \bar{u}^{\gamma} \underline{+\dfrac{\kappa}{\epsilon}\sum_{i=1}^{N}i^{1/3} \bar{u}^{\gamma}}+\dfrac{\kappa\delta}{\epsilon}\frac{1}{|\mathbb{T}|^{3}}\sum_{i=1}^{N}i^{1/3} \iint \sqrt{\mu_i}(u f_i)^{\gamma} \mathrm{d} v \mathrm{d} x =\underline{\frac{\kappa\delta}{\epsilon} \frac{1}{|\mathbb{T}|^{3}} \sum_{i=1}^{N} \int\int  i^{1/3} v \sqrt{\mu_i} f_i^\gamma \mathrm{d} v \mathrm{d} x},\\			
				& -\frac{1}{|\mathbb{T}|^{3}}\sum_{i=1}^{N}\int\int\delta v \sqrt{\mu_i} f_i^\gamma \mathrm{d} v \mathrm{d} x=\bar{u}^\gamma .
			\end{aligned}
		\end{equation}
		Now do $L^{2}$ estimate on each equation above (except the third one), i.e., multiply the first equation by $\kappa\bar{\theta}\partial^{\alpha} f_i^{\gamma}$, integrate in $(v, x)$ and sum over $i$; multiply the second equation by $\dfrac{\partial^{\alpha} u^{\gamma}}{\delta^2}$ and integrate in $x$;  multiply the fourth equation by $\dfrac{\bar{u}^{\gamma}}{\delta^2}$. Finally add the results together and sum over $|\gamma| \leq r,|\alpha| \leq s$. Then one gets the following equation (at each $z$):
		$$
		\frac{1}{2} \partial_{t} E+G+B\leq 0,
		$$
		where the energy $E$ is defined in \eqref{def:E0}. The good terms $G$ are given by
		$$
		G=\underline{G_{1}}+\underbrace{G_{2}}_{|\gamma| \leq s}=\underline{G_{1}}+\dfrac{\kappa}{\epsilon}\sum_{i=1}^{N}i^{1/3}\underbrace{G_{2,i}}_{|\gamma| \leq s}= \frac{1}{\delta^2} G_{1, \gamma}+\dfrac{\kappa}{\epsilon\delta^2}\sum_{i=1}^{N}i^{1/3}\sum_{|\gamma| \leq s} G_{2, i,\gamma},
		$$
		with
		$$
		\begin{aligned}
			G_{1, \gamma} &=\left|\nabla_{x} u^{\gamma}\right|_{s}^{2}+\dfrac{\kappa}{\epsilon}\left(\sum_{i=1}^{N}i^{1/3}-1\right)\left|\bar{u}^{\gamma}\right|^{2} 
			, \\
			G_{2,i, \gamma} &=\left|u^{\gamma} \sqrt{\mu_i}-\delta\frac{\bar{\theta}}{i} \nabla_{v} f_i^{\gamma}- \delta\frac{v}{2} f_i^{\gamma}\right|_{s}^{2},
		\end{aligned}
		$$
		$G_{1}$ and $G_{2}$ come from the underlined terms and the underbraced terms in \eqref{eq:ufgamma} respectively. To verify the $G_{2}$ term, we provide the following calculation:
		$$
		\begin{aligned}
			\left\langle\partial^{\alpha} u^{\gamma}, \partial^{\alpha} u^{\gamma}\right\rangle-&\left\langle \delta \int v \sqrt{\mu_i} \partial^{\alpha} f_i^{\gamma} \mathrm{d} v, \partial^{\alpha} u^{\gamma}\right\rangle
			-\left\langle\delta\partial^{\alpha} u^{\gamma} \cdot v \sqrt{\mu_i}, \partial^{\alpha} f_i^{\gamma}\right\rangle\\
			&-\left\langle\delta^2\left(\frac{-|v|^{2}}{4}+\frac{3}{2}\dfrac{\bar{\theta}}{i}+\dfrac{\bar{\theta}^2}{i^2}\Delta_{v}\right) \partial^{\alpha} f_i^{\gamma}, \partial^{\alpha} f_i^{\gamma}\right\rangle 
			=\left|\partial^{\alpha}\left(u^{\gamma} \sqrt{\mu_i}-\delta\frac{\bar{\theta}}{i} \nabla_{v} f_i^{\gamma}-\delta\frac{v}{2} f_i^{\gamma}\right)\right|_{0}^{2}.
		\end{aligned}
		$$
		The notation $|\cdot|_{0}$ is defined in  \eqref{Sobolevnorm} with $s=r=0$, i.e., taking $L_{x, v}^{2}$ norm for a fixed $z$.
		
		The bad terms $B$ are given by
		$$
		\begin{aligned}
			B&=B_{1}+\dfrac{\kappa}{\epsilon}\sum_{i=1}^{N}i^{1/3}B_{2,i}+\dfrac{\kappa}{\epsilon}\sum_{i=1}^{N}i^{1/3}B_{3,i}\\
			&=\frac{1}{\delta^2}\sum_{|\gamma| \leq r,|\alpha| \leq s} B_{1, \alpha, \gamma}+\dfrac{\kappa}{\epsilon\delta^2}\sum_{i=1}^{N}i^{1/3}\sum_{|\gamma| \leq r,|\alpha| \leq s} B_{2, i,\alpha, \gamma}+\dfrac{\kappa}{\epsilon\delta^2}\sum_{i=1}^{N}i^{1/3}\sum_{|\gamma| \leq r} B_{3, i,\gamma},
		\end{aligned}
		$$
		with
		$$
		\begin{aligned}
			&B_{1, \alpha, \gamma}=\left\langle\partial^{\alpha}\left(u \cdot \nabla_{x} u\right)^{\gamma}, \partial^{\alpha} u^{\gamma}\right\rangle, \\
			&B_{2,i,\alpha, \gamma}=\left\langle\delta\partial^{\alpha}(u f_i)^{\gamma}, \partial^{\alpha}\left[u^{\gamma} \sqrt{\mu_i}-\delta\frac{\bar{\theta}}{i} \nabla_{v} f_i^{\gamma}- \delta\frac{v}{2} f_i^{\gamma}\right]\right\rangle, \\
			&B_{3,i, \gamma}=\frac{1}{|\mathbb{T}|^{3}}\left\langle\delta(u f_i)^{\gamma}, \bar{u}^{\gamma} \sqrt{\mu_i}\right\rangle,
		\end{aligned}
		$$
		coming from the nonlinear terms.
		
		By using Lemma 3.3 in \cite{JinLin2022}, the bad terms are controlled by
		$$
		\begin{aligned}
			&\frac{1}{\delta^2}\left|B_{1, \alpha, \gamma}\right| \leq \frac{C(s,r)}{\tau\delta^2}|u|_{s+1, r}^{2}|u|_{s, r}^{2}+\frac{\tau}{\delta^2}|u|_{s, r}^{2} \leq \frac{C}{\tau} E G_{1}+\tau G_{1}, \\
			&\dfrac{\kappa}{\epsilon\delta^2}\sum_{i=1}^{N}i^{1/3}\left|B_{2, i,\alpha, \gamma}\right| \leq \dfrac{\kappa}{\epsilon}\sum_{i=1}^{N}i^{1/3}\left(\frac{C(s,r)}{\tau\delta^2}|u|_{s, r}^{2}|f_i|_{s, r}^{2}+\frac{\tau}{\delta^2}\left|u^{\gamma} \sqrt{\mu_i}-\delta\frac{\bar{\theta}}{i} \nabla_{v} f_i^{\gamma}-\delta \frac{v}{2} f_i^{\gamma}\right|_{s} \right)\leq \frac{C}{\tau} E G_{1}+\tau G_{2}, \\
			&\dfrac{\kappa}{\epsilon\delta^2}\sum_{i=1}^{N}i^{1/3}\left|B_{3,i, \gamma}\right| \leq \dfrac{\kappa}{\epsilon}\sum_{i=1}^{N}i^{1/3}\left(\frac{C(s,r)}{\tau\delta^2}|u|_{s, r}^{2}|f_i|_{s, r}^{2}+\frac{\tau}{\delta^2}\left|\bar{u}^{\gamma}\right|^{2}\right) \leq \frac{C}{\tau} E G_{1}+\tau G_{1}.
		\end{aligned}
		$$
		
		In conclusion, we have the energy estimate
		\begin{equation}\label{energyestimate}
			\frac{1}{2} \partial_{t} E \leq-(1-\frac{C}{\tau} E-C \tau) G.
		\end{equation}
		Take $\tau=\frac{1}{4 C}$ where $C$ is the constant in \eqref{energyestimate}, and $c_{1}(s, r)=\frac{1}{16C^2}$. 
		Then it follows that $E(t) \leq c_{1}$ for $0 \leq t \leq T^{*}$. By the choice of $\tau$ and $c_{1}$, one has
		$$
		1-C(\tau) E-C \tau \geq 1-\frac{1}{4}-\frac{1}{4}=\frac{1}{2},
		$$
		and thus \eqref{energyestimate} implies
		\begin{equation}\label{ineq:E0}
			\partial_{t} E+G \leq 0,
		\end{equation}
		for $0 \leq t \leq T^{*}$. This prevents $T^{*}$ from being finite. Hence we proved $E(t) \leq c_{1}$ for all $t$ and thus \eqref{ineq:E0} holds for all $t$. Thus $E(t)$ is decreasing in $t$.
		\qed
	\end{proof}
	
	\section* {Appendix A.2 Proof of Lemma  \ref{Lem2.2}}
	\begin{proof}
		One can write the evolution equation of $\partial^{\alpha} f_i^{\gamma}$ as
		\begin{equation}\label{eq:fgamma}
			\begin{aligned}
				\partial_{t} \partial^{\alpha} f_i^{\gamma}+\dfrac{i}{\bar{\theta}}\mathcal{P}_i \partial^{\alpha} f_i^{\gamma}+& \frac{i^{1/3}}{\bar{\theta}\epsilon}\left(\mathcal{K}_i^{*} \cdot \mathcal{K}_i\right) \partial^{\alpha} f_i^{\gamma}= \frac{i^{1/3}}{\bar{\theta}\epsilon\delta} \partial^{\alpha} u^{\gamma} \cdot v \sqrt{\mu_i} \\
				&+\frac{i^{1/3}}{\bar{\theta}\epsilon} \sum_{0 \leq \eta \leq \alpha} \sum_{0 \leq \beta \leq \gamma}\left(\begin{array}{l}
					\gamma \\
					\beta
				\end{array}\right)\left(\begin{array}{l}
					\alpha \\
					\eta
				\end{array}\right) \partial^{\eta} u^{\beta} \cdot \mathcal{K}_i^{*} \partial^{\alpha-\eta} f_i^{\gamma-\beta}.
			\end{aligned}
		\end{equation}
		Take the $(\cdot, \cdot)$ inner product of \eqref{eq:fgamma} with $\partial^{\alpha} f_i^{\gamma}$. 
		
		For the linear terms, by the same argument as the proof of Proposition $4.1$ of \cite{GoudonHe2010}, one gets
		$$
		\begin{aligned}
			i^{1/3}\left(\mathcal{P}_i \partial^{\alpha} f_i^{\gamma}, \partial^{\alpha} f_i^{\gamma}\right)=& 2 \left\langle\mathcal{S}_i \partial^{\alpha} f_i^{\gamma}, \mathcal{K}_i \partial^{\alpha} f_i^{\gamma}\right\rangle+\epsilon^2\left|\mathcal{S}_i \partial^{\alpha} f_i^{\gamma}\right|_{0}^{2} \geq \frac{3}{4}\epsilon^2\left|\mathcal{S}_i \partial^{\alpha} f_i^{\gamma}\right|_{0}^{2}- \frac{4}{\epsilon^2} \left|\mathcal{K}_i \partial^{\alpha} f_i^{\gamma}\right|_{0}^{2}, \\
			\dfrac{i^{1/3}}{\epsilon}\left(\mathcal{K}_i^{*} \cdot \mathcal{K}_i \partial^{\alpha} f_i^{\gamma}, \partial^{\alpha} f_i^{\gamma}\right)=& \dfrac{2}{\epsilon} \left|\mathcal{K}_i \partial^{\alpha} f_i^{\gamma}\right|_{0}^{2}+\dfrac{2}{\epsilon} \left|\mathcal{K}_i^{2} \partial^{\alpha} f_i^{\gamma}\right|_{0}^{2} +\epsilon\left\langle\mathcal{K}_i \partial^{\alpha} f_i^{\gamma}, \mathcal{S}_i \partial^{\alpha} f_i^{\gamma}\right\rangle\\
			&+2\epsilon \left\langle\mathcal{K}_i^{2} \partial^{\alpha} f_i^{\gamma}, \mathcal{S}_i \mathcal{K}_i \partial^{\alpha} f_i^{\gamma}\right\rangle +\epsilon^2\left|\mathcal{S}_i \mathcal{K}_i \partial^{\alpha} f^{\gamma}\right|_{0}^{2}\\
			\geq & \dfrac{3}{2} \left|\mathcal{K}_i \partial^{\alpha} f_i^{\gamma}\right|_{0}^{2}-\frac{\epsilon^2}{2}\left|\mathcal{S}_i \partial^{\alpha} f^{\gamma}\right|_{0}^{2}+\frac{1}{2} \left|\mathcal{K}_i^{2} \partial^{\alpha} f^{\gamma}\right|_{0}^{2}+\frac{\epsilon^2}{3}\left|\mathcal{S}_i \mathcal{K}_i \partial^{\alpha} f^{\gamma}\right|_{0}^{2}, \\
			\dfrac{i^{1/3}}{\epsilon}\left|\left(\partial^{\alpha} u^{\gamma} \cdot v \sqrt{\mu_i}, \partial^{\alpha} f_i^{\gamma}\right)\right|=& \left| \dfrac{2}{\epsilon} \left\langle\mathcal{K}_i\left(\partial^{\alpha} u^{\gamma} \cdot v \sqrt{\mu_i}\right), \mathcal{K}_i \partial^{\alpha} f_i^{\gamma}\right\rangle+\epsilon\left\langle\mathcal{K}_i\left(\partial^{\alpha} u^{\gamma} \cdot v \sqrt{\mu_i}\right), \mathcal{S}_i \partial^{\alpha} f_i^{\gamma}\right\rangle \right. \\
			&\left. +\epsilon\left\langle\mathcal{S}_i\left(\partial^{\alpha} u^{\gamma} \cdot v \sqrt{\mu_i}\right), \mathcal{K}_i \partial^{\alpha} f_i^{\gamma}\right\rangle+\epsilon^2\left\langle\mathcal{S}_i\left(\partial^{\alpha} u^{\gamma} \cdot v \sqrt{\mu_i}\right), \mathcal{S}_i \partial^{\alpha} f_i^{\gamma}\right\rangle \right| \\
			\leq & \tau\left(\dfrac{1}{\epsilon^2}\left|\mathcal{K}_i \partial^{\alpha} f_i^{\gamma}\right|_{0}^{2}+\epsilon^2\left|\mathcal{S}_i \partial^{\alpha} f_i^{\gamma}\right|_{0}^{2}\right)+\frac{C_1}{\tau}\left(|u|_{s, r}^{2}+\left|\nabla_{x} u\right|_{s, r}^{2}\right) .
		\end{aligned}
		$$
		The notation $|\cdot|_{0}$ is defined by \eqref{Sobolevnorm} with $s=r=0$, i.e., taking $L_{x, v}^{2}$ norm for a fixed $z$. 
		
		For the nonlinear term (the summation), in view of the lemma below (Lemma \ref{lemAppendix}),  one gets
		$$
		\begin{aligned}
			& \dfrac{i^{1/3}}{\epsilon}\left|\left(\partial^{\eta} u^{\beta} \cdot \mathcal{K}_i^{*} \partial^{\alpha-\eta} f_i^{\gamma-\beta}, \partial^{\alpha} f_i^{\gamma}\right)\right| \\ =&
			\left| \frac{2}{\epsilon}\left\langle \partial^{\eta} u^{\beta} \cdot K^{*} \partial^{\alpha-\eta} f_i^{\gamma-\beta}, K^{2} \partial^{\alpha} f_i^{\gamma}\right\rangle+\frac{2}{\epsilon}\left\langle \partial^{\eta} u^{\beta} \partial^{\alpha-\eta} f_i^{\gamma-\beta}, K \partial^{\alpha} f_i^{\gamma}\right\rangle\right.\\
			&+\epsilon\left\langle K\left(\partial^{\eta} u^{\beta} \cdot K^{*} \partial^{\alpha-\eta} f_i^{\gamma-\beta}\right), S \partial^{\alpha} f_i^{\gamma}\right\rangle 
			+\epsilon\left\langle S\left(\partial^{\eta} u^{\beta} \cdot K^{*} \partial^{\alpha-\eta} f_i^{\gamma-\beta}\right), K \partial^{\alpha} f_i^{\gamma}\right\rangle\\
			&\left.+\epsilon^2\left\langle S\left(\partial^{\eta} u^{\beta} \cdot K^{*} \partial^{\alpha-\eta} f_i^{\gamma-\beta}\right), S \partial^{\alpha} f_i^{\gamma}\right\rangle \right| \\
			\leq & C_2 \left|\partial^{\eta} u^{\beta}\right|_{3,0}\frac{1}{\epsilon^2}\left(\left[\partial^{\alpha-\eta} f_i^{\gamma-\beta}, \partial^{\alpha-\eta} f_i^{\gamma-\beta}\right]+\left[\partial^{\alpha} f_i^{\gamma}, \partial^{\alpha} f_i^{\gamma}\right]\right) \\
			\leq & C_2  \frac{1}{\epsilon^2} |u|_{s+3, r}[f_i, f_i]_{s, r},
		\end{aligned}
		$$
		where we used the fact that the $x$ and $z$ derivatives commute with the operators $\mathcal{K}_i$ and $\mathcal{S}_i$.

		A combination of the above estimates yields
		$$
		\begin{aligned}
			&\frac{1}{2} \partial_{t}\left(\partial^{\alpha} f_i^{\gamma}, \partial^{\alpha} f_i^{\gamma}\right)+\dfrac{1}{\bar{\theta}}\left( \dfrac{\epsilon^2}{4}\left|\mathcal{S}_i \partial^{\alpha} f_i^{\gamma}\right|_{0}^{2} + \dfrac{1}{2}\left|\mathcal{K}_i^{2} \partial^{\alpha} f_i^{\gamma}\right|_{0}^{2}+\frac{\epsilon^2}{3}\left|\mathcal{S K} \partial^{\alpha} f_i^{\gamma}\right|_{0}^{2}-\left(\frac{4}{\epsilon^2}-\frac{3}{2}\right)\left|\mathcal{K}_i \partial^{\alpha} f_i^{\gamma}\right|_{0}^{2}\right)\\
			&\quad \leq \frac{\tau}{\delta\bar{\theta}}\left(\frac{1}{\epsilon^2}\left|\mathcal{K}_i \partial^{\alpha} f_i^{\gamma}\right|_{0}^{2}+\epsilon^2\left|\mathcal{S}_i \partial^{\alpha} f_i^{\gamma}\right|_{0}^{2}\right)+\frac{C_1}{\tau\delta\bar{\theta}}\left(|u|_{s, r}^{2}+\left|\nabla_{x} u\right|_{s, r}^{2}\right)+ \frac{C_2}{\bar{\theta}\epsilon^2}|u|_{s+3, r}[f_i, f_i]_{s, r}.
		\end{aligned}
		$$
		Summing over $\alpha, \gamma$, and choosing $\tau=\frac{\delta}{8} $ to absorb the term $\left|\mathcal{S}_i \partial^{\alpha} f_i^{\gamma}\right|_{0}^{2}$ on the RHS by the same term on the LHS,  one gets
		$$
		\partial_{t}(f_i, f_i)_{s, r}+\frac{1}{\bar{\theta}\epsilon^2}\left(\frac{1}{8}-MC_2|u|_{s+3, r}\right) [f_i, f_i]_{s, r} \leq \frac{C_3}{\bar{\theta}}\left(\frac{1}{\delta^2}|u|_{s, r}^{2}+\frac{1}{\delta^2}\left|\nabla_{x} u\right|_{s, r}^{2}+\frac{1}{\epsilon^2}|\mathcal{K}_i f_i|_{s, r}^{2}\right),
		$$
		where 
		$M$ is the number of possible pairs $(\alpha, \gamma)$, $C_3 = \max \left\{5, 8M C_1\right\}$.
		Thus if one chooses $c_{1}^{\prime}=\min \left\{c_{1}(s+3, r), \frac{1}{16 MC_2}\right\}$, then by Theorem \ref{thm:energyestimate}, $E_{s+3, r}(t)$ is decreasing and $E_{s+3, r}(t) \leq$ $c_{1}^{\prime}$ for all $t$. Hence $|u|_{s+3, r} \leq E_{s+3, r} \leq c_{1}^{\prime}$ for all $t$ and one gets the conclusion with $\lambda_{1}=\frac{1}{16}$.\qed
	\end{proof}
	
	\setcounter{equation}{0}
	\renewcommand\thelemma{A.\arabic{lemma}}
	
	\begin{lemma}\label{lemAppendix}
		For $f_i$ and $g_i$ orthogonal to $\sqrt{\mu_i}$, 
		$$
		\frac{i^{1/3}}{\epsilon}\left|\left(u \cdot \mathcal{K}_i^{*} f_i, g_i\right)\right| \leq C \frac{1}{\epsilon^2}\|u\|_{H^{3}}([f_i, f_i]+[g_i, g_i]).
		$$
	\end{lemma}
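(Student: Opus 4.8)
The plan is to expand the bracket-style inner product on the left according to its definition and then bound the four resulting pieces one by one, in each case moving all the work onto quantities already controlled by $[f_i,f_i]$ and $[g_i,g_i]$. Writing $h:=u\cdot\mathcal{K}_i^{*}f_i=\sum_k u_k\mathcal{K}_{ik}^{*}f_i$, the definition of $(\cdot,\cdot)$ gives
\[
\frac{i^{1/3}}{\epsilon}(h,g_i)=\frac1{\epsilon}\Big(2\langle\mathcal{K}_i h,\mathcal{K}_i g_i\rangle+\epsilon^{2}\langle\mathcal{K}_i h,\mathcal{S}_i g_i\rangle+\epsilon^{2}\langle\mathcal{S}_i h,\mathcal{K}_i g_i\rangle+\epsilon^{3}\langle\mathcal{S}_i h,\mathcal{S}_i g_i\rangle\Big),
\]
so it suffices to estimate $|\mathcal{K}_i h|_0$ and $|\mathcal{S}_i h|_0$ in terms of $\|u\|_{H^3}$ and $[f_i,f_i]^{1/2}$, and to pair them against the factors $|\mathcal{K}_i g_i|_0\lesssim[g_i,g_i]^{1/2}$ and $|\mathcal{S}_i g_i|_0\lesssim\epsilon^{-2}[g_i,g_i]^{1/2}$ read off from the definition of $[\cdot,\cdot]$.

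The algebraic engine is the canonical commutation relation $[\mathcal{K}_{ij},\mathcal{K}_{ik}^{*}]=\frac{\bar\theta}{i}\delta_{jk}$, which follows directly from $\mathcal{K}_{ij}=\frac{\bar\theta}{i}\partial_{v_j}+\frac{v_j}{2}$ and $\mathcal{K}_{ik}^{*}=-\frac{\bar\theta}{i}\partial_{v_k}+\frac{v_k}{2}$. First, since $u=u(x)$ commutes with every $v$-operator, $\mathcal{K}_{ij}h=\sum_k u_k\mathcal{K}_{ij}\mathcal{K}_{ik}^{*}f_i=\sum_k u_k\mathcal{K}_{ik}^{*}\mathcal{K}_{ij}f_i+\frac{\bar\theta}{i}u_j f_i$. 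Second, taking adjoints and using the same commutator once more yields the pointwise identity $|\mathcal{K}_{ik}^{*}w|_0^{2}=|\mathcal{K}_{ik}w|_0^{2}+\frac{\bar\theta}{i}|w|_0^{2}$, which is exactly the tool that converts the ``wrong-adjoint'' factor $\mathcal{K}_{ik}^{*}\mathcal{K}_{ij}f_i$ into the controlled quantities $|\mathcal{K}_i^{2}f_i|_0$ and $|\mathcal{K}_i f_i|_0$. Combining these, $|\mathcal{K}_i h|_0\lesssim\|u\|_{L^\infty}\big(|\mathcal{K}_i^{2}f_i|_0+|\mathcal{K}_i f_i|_0+|f_i|_0\big)$; a completely analogous computation, now letting $\mathcal{S}_i=\frac{\bar\theta^2}{i^2}\partial_{x}$ fall either on the coefficient $u$ or on $f_i$ (where $\partial_x f_i$ is reabsorbed as $\mathcal{S}_i f_i$), gives $|\mathcal{S}_i h|_0\lesssim\|\nabla_x u\|_{L^\infty}|\mathcal{K}_i^{*}f_i|_0+\|u\|_{L^\infty}|\mathcal{K}_i^{*}\mathcal{S}_i f_i|_0$, and the same $\mathcal{K}^{*}$-to-$\mathcal{K}$ identity bounds these by $|\mathcal{K}_i f_i|_0$, $|\mathcal{K}_i\mathcal{S}_i f_i|_0$ and $|\mathcal{S}_i f_i|_0$.

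To close, I would invoke two soft facts. The orthogonality $f_i\perp\sqrt{\mu_i}$ gives the Poincar\'e inequality $|f_i|_0\lesssim|\mathcal{K}_i f_i|_0$ (since $\mathcal{K}_i\sqrt{\mu_i}=0$ and $\mathcal{K}_i^{*}\mathcal{K}_i$ has a spectral gap on $(\sqrt{\mu_i})^\perp$), which is needed because $[\cdot,\cdot]$ contains no bare $|f_i|_0^2$ term; and the Sobolev embedding $H^3(\mathbb{T}^3)\hookrightarrow W^{1,\infty}$ bounds $\|u\|_{L^\infty}+\|\nabla_x u\|_{L^\infty}\lesssim\|u\|_{H^3}$. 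Reading off the $\epsilon$-weights from $[\cdot,\cdot]$, namely $|\mathcal{K}_i^{2}f_i|_0\lesssim\epsilon^{-1}[f_i,f_i]^{1/2}$ and $|\mathcal{S}_i f_i|_0,|\mathcal{K}_i\mathcal{S}_i f_i|_0\lesssim\epsilon^{-2}[f_i,f_i]^{1/2}$, one finds $|\mathcal{K}_i h|_0\lesssim\epsilon^{-1}\|u\|_{H^3}[f_i,f_i]^{1/2}$ and $|\mathcal{S}_i h|_0\lesssim\epsilon^{-2}\|u\|_{H^3}[f_i,f_i]^{1/2}$. Substituting into the four pieces and using $\epsilon\le1$, each term is bounded by $\epsilon^{-2}\|u\|_{H^3}[f_i,f_i]^{1/2}[g_i,g_i]^{1/2}$; a final application of Young's inequality $ab\le\frac12(a^2+b^2)$ turns the product of the two brackets into the sum $[f_i,f_i]+[g_i,g_i]$, giving the claim.

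I expect the main obstacle to be the \emph{adjoint mismatch}: after differentiating the product $u\cdot\mathcal{K}_i^{*}f_i$ one is left with factors of the form $\mathcal{K}_i^{*}(\mathcal{K}_i f_i)$ and $\mathcal{K}_i^{*}(\mathcal{S}_i f_i)$ carrying an unwanted velocity weight, and these are not literally among the quantities controlled by $[\cdot,\cdot]$. The identity $|\mathcal{K}_{ik}^{*}w|_0^{2}=|\mathcal{K}_{ik}w|_0^{2}+\frac{\bar\theta}{i}|w|_0^{2}$ coming from the commutation relation is precisely what dissolves this difficulty, and the careful bookkeeping of the $\epsilon$-powers (ensuring the worst case is exactly $\epsilon^{-2}$, not worse) is the only other point requiring attention.
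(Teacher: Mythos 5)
Your proof is correct and follows essentially the same route as the paper's: expand the weighted form $\left(u \cdot \mathcal{K}_i^{*} f_i, g_i\right)$ by its definition, remove the starred operator via the commutation between $\mathcal{K}_i$ and $\mathcal{K}_i^{*}$, control $u$ through the embedding $H^3 \hookrightarrow W^{1,\infty}$, and track the $\epsilon$-weights in $[\cdot,\cdot]$ down to the worst case $\epsilon^{-2}$, closing with Cauchy--Schwarz and Young.

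Two points of divergence are worth recording. First, a mechanical one: the paper discharges the adjoints onto $g_i$ by integration by parts, via $\mathcal{K}_i\left(u \cdot \mathcal{K}_i^* f_i\right)=\mathcal{K}_i^*(u \cdot \mathcal{K}_i f_i)+u f_i$ and $\left\langle \mathcal{K}_i^* h, \mathcal{K}_i g_i\right\rangle=\left\langle h, \mathcal{K}_i^2 g_i\right\rangle$, so that every pairing is a direct Cauchy--Schwarz between entries of $[f_i,f_i]$ and $[g_i,g_i]$; you instead stay on the $f_i$-side and dissolve the stars with the norm identity $|\mathcal{K}_{ik}^{*}w|_0^{2}=|\mathcal{K}_{ik}w|_0^{2}+\tfrac{\bar\theta}{i}|w|_0^{2}$. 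Both devices rest on the same relation $[\mathcal{K}_{ij},\mathcal{K}_{ik}^{*}]=\tfrac{\bar\theta}{i}\delta_{jk}$ and your $\epsilon$-bookkeeping ($|\mathcal{K}_i h|_0\lesssim \epsilon^{-1}\|u\|_{H^3}[f_i,f_i]^{1/2}$, $|\mathcal{S}_i h|_0\lesssim \epsilon^{-2}\|u\|_{H^3}[f_i,f_i]^{1/2}$) checks out term by term. Second, the one step that needs care: for the zeroth-order term you invoke the pure spectral gap $|f_i|_0\lesssim|\mathcal{K}_i f_i|_0$, which is valid only if the orthogonality $\int f_i\sqrt{\mu_i}\,\mathrm{d}v=0$ holds for a.e.\ $x$ --- the kernel of $\mathcal{K}_i$ in $L^2_{x,v}$ contains every function of the form $\rho(x)\sqrt{\mu_i}$, so a merely global orthogonality does not give this inequality. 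Your reading is consistent with the lemma as stated, but in the downstream application (Lemma \ref{Lem2.2}, applied to $\partial^{\alpha}f_i^{\gamma}$) only the global condition \eqref{initialu0_2} is propagated in time, which is why the paper instead invokes Lemma 3.4 of \cite{JinLin2022}, i.e.\ $|f_i|_0\le C\left(|\mathcal{K}_i f_i|_0+\epsilon^2|\mathcal{S}_i f_i|_0\right)$. Fortunately the substitution costs you nothing: since $\epsilon^2|\mathcal{S}_i f_i|_0\lesssim[f_i,f_i]^{1/2}$ by the definition of $[\cdot,\cdot]$, replacing your Poincar\'e step by this augmented version leaves your bounds on $|\mathcal{K}_i h|_0$ and $|\mathcal{S}_i h|_0$, and hence the final $\epsilon^{-2}$ constant, unchanged.
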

	
	\begin{proof}
		Using the commutator relation $\mathcal{K}_i\left(u \cdot \mathcal{K}_i^* f_i\right)=\mathcal{K}_i^*(u \cdot \mathcal{K}_i f_i)+u f_i$, one has
		\begin{equation}\label{eq:Kfg}
			\begin{aligned}
				&\dfrac{i^{1/3}}{\epsilon}\left(u\cdot K^*f_i,g_i\right)\\=& \frac{2}{\epsilon}\left\langle\mathcal{K}_i\left(u \cdot \mathcal{K}_i^{*} f_i\right), \mathcal{K}_i g_i\right\rangle+\epsilon\left\langle\mathcal{K}_i\left(u \cdot \mathcal{K}_i^{*} f_i\right), \mathcal{S}_i g_i\right\rangle+\epsilon\left\langle\mathcal{S}_i\left(u \cdot \mathcal{K}_i^{*} f_i\right), \mathcal{K}_i g_i\right\rangle+\epsilon^{2}\left\langle\mathcal{S}_i\left(u \cdot \mathcal{K}_i^{*} f_i\right), \mathcal{S}_i g_i\right\rangle \\
				=& \frac{2}{\epsilon}\left\langle u \mathcal{K}_i f_i, \mathcal{K}_i^{2} g_i\right\rangle+\frac{2}{\epsilon}\langle u f_i, \mathcal{K}_i g_i\rangle+\epsilon\langle u \mathcal{K}_i f_i, \mathcal{K}_i \mathcal{S}_i g_i\rangle+\epsilon\langle u f_i, \mathcal{S}_i g_i\rangle +\epsilon\left\langle\mathcal{S}_i(u f_i), \mathcal{K}_i^{2} g_i\right\rangle+\epsilon^{2}\langle\mathcal{S}_i(u f_i), \mathcal{S}_i \mathcal{K}_i g_i\rangle \\
				=& \frac{2}{\epsilon}\left\langle u \mathcal{K}_i f_i, \mathcal{K}_i^{2} g_i\right\rangle+\frac{2}{\epsilon}\langle u f_i, \mathcal{K}_i g_i\rangle+\epsilon\langle u \mathcal{K}_i f_i, \mathcal{K}_i \mathcal{S}_i g_i\rangle+\epsilon\langle u f_i, \mathcal{S}_i g_i\rangle +\epsilon\left\langle(\mathcal{S}_i u) f_i, \mathcal{K}_i^{2} g_i\right\rangle+\epsilon\left\langle u(\mathcal{S}_i f_i), \mathcal{K}_i^{2} g_i\right\rangle \\
				&+\epsilon^{2}\langle(\mathcal{S}_i u) f_i, \mathcal{S}_i \mathcal{K}_i g_i\rangle+\epsilon^{2}\langle u(\mathcal{S}_i f_i), \mathcal{K}_i \mathcal{S}_i g_i\rangle .
			\end{aligned}
		\end{equation}
		
		For each term in \eqref{eq:Kfg}, by using the Cauchy-Schwarz inequality, Lemma 3.4 in \cite{JinLin2022}, and the Sobolev inequality
		$\|u\|_{L^{\infty}}+\left\|\nabla_{x} u\right\|_{L^{\infty}} \leq C\|u\|_{H^{3}},
		$
		one has
		$$
		\begin{aligned}
			\frac{1}{\epsilon}\left\langle u \mathcal{K}_i f_i, \mathcal{K}_i^{2} g_i\right\rangle & \leq  \dfrac{1}{\epsilon}\|u\|_{L^{\infty}}\|\mathcal{K}_i f_i\|_{L^{2}}\|\mathcal{K}_i^2 g_i\|_{L^{2}} \leq C\|u\|_{L^{\infty}}\left(\dfrac{1}{\epsilon^2}\|\mathcal{K}_i f_i\|_{L^{2}}^2+\|\mathcal{K}_i^2 g_i\|_{L^{2}}^2\right), \\
			\dfrac{1}{\epsilon}\langle u f_i, \mathcal{K}_i g_i\rangle & \leq \dfrac{1}{\epsilon}\|u\|_{L^{\infty}}\|f_i\|_{L^{2}}\|\mathcal{K}_i g_i\|_{L^{2}} \leq C\dfrac{1}{\epsilon}\|u\|_{L^{\infty}}\left(\|\mathcal{K}_i f_i\|_{L^{2}}+\epsilon^2\|\mathcal{S}_i f\|_{L^{2}}\right)\|\mathcal{K}_i g_i\|_{L^{2}} \\
			& \leq C\|u\|_{L^{\infty}}\left(\dfrac{1}{\epsilon^2}\|\mathcal{K}_i f_i\|_{L^{2}}^{2}+\|\mathcal{K}_i g\|_{L^{2}}^{2}+\epsilon^2\|\mathcal{S}_i f\|_{L^{2}}^{2}+\|\mathcal{K}_i g_i\|_{L^{2}}^{2}\right), \\
			\epsilon\langle u \mathcal{K}_i f_i, \mathcal{K}_i \mathcal{S}_i g_i\rangle & \leq \epsilon\|u\|_{L^{\infty}}\|\mathcal{K}_i f_i\|_{L^{2}}\|\mathcal{K}_i\mathcal{S}_i g_i\|_{L^{2}} \leq C\|u\|_{L^{\infty}}\left(\|\mathcal{K}_i f_i\|_{L^{2}}^2+\epsilon^2\|\mathcal{K}_i\mathcal{S}_i  g_i\|_{L^{2}}^2\right),\\
			\epsilon\langle u f_i, \mathcal{S}_i g_i\rangle & \leq \epsilon\|u\|_{L^{\infty}}\|f_i\|_{L^{2}}\|\mathcal{S}_i g_i\|_{L^{2}} \leq C \epsilon\|u\|_{L^{\infty}}\left(\|\mathcal{K}_i f_i\|_{L^{2}}+\epsilon^2\|\mathcal{S}_i f_i\|_{L^{2}}\right)\|\mathcal{S}_i g_i\|_{L^{2}} \\
			& \leq C \epsilon\|u\|_{L^{\infty}}\left(\dfrac{1}{\epsilon^2}\|\mathcal{K}_i f_i\|_{L^{2}}^{2}+\epsilon^2\|\mathcal{S}_i g_i\|_{L^{2}}^{2}+\epsilon^2\|\mathcal{S}_i f_i\|_{L^{2}}^{2}+\epsilon^2\|\mathcal{S}_i g_i\|_{L^{2}}^{2}\right).
		\end{aligned}
		$$
		The latter four terms in \eqref{eq:Kfg} can be approximated similarly and one obtains
		$$
		\frac{i^{1/3}}{\epsilon}\left|\left(u \cdot \mathcal{K}_i^{*} f_i, g_i\right)\right| \leq C \frac{1}{\epsilon^2}\|u\|_{H^{3}}([f_i, f_i]+[g_i, g_i]).
		$$\qed
	\end{proof}
	
	\section* {Appendix A.3 Proof of Lemma \ref{thm:hypocoercivity}}
	\begin{proof}
		Multiplying \eqref{ineq:flambda1} by $\lambda_{4}\kappa\bar{\theta}$ ($\lambda_{4}>0$ is a constant to be chosen later), summing over $i$, and adding to \eqref{ineq:E0} yields
		$$
		\partial_{t} \tilde{E}+\tilde{G} \leq \lambda_{4} \tilde{B},
		$$
		where
		$$
		\begin{aligned}
			&\tilde{E}=E+\lambda_{4}\kappa\bar{\theta}\sum_{i=1}^{N}(f_i, f_i)_{s, r}, \quad \tilde{G}=G+\lambda_{4} \lambda_{1}\frac{\kappa}{\epsilon^2} \sum_{i=1}^{N}[f_i, f_i]_{s, r},  \\
			&\tilde{B}=C\left(\lambda_{1}\right)\kappa\sum_{i=1}^{N}\left(\frac{1}{\delta^2}|u|_{s, r}^{2}+\frac{1}{\delta^2}\left|\nabla_{x} u\right|_{s, r}^{2}+\frac{1}{\epsilon^{2}}|\mathcal{K}_i f_i|_{s, r}^{2}\right) . 
		\end{aligned}
		$$
		It is clear that 
		$$\tilde{B} \leq C\left(G+\kappa\sum_{i=1}^{N}\frac{1}{\epsilon^{2}}|\mathcal{K}_i f_i|_{s, r}^{2}\right)
		\leq C \tilde{G}.$$ Thus by choosing $\lambda_{4}=\min \left\{\frac{1}{2 C}, 1\right\},\, C$ being the previous constant, one gets
		\begin{equation}\label{ineq:Etilde0}
			\partial_{t} \tilde{E}+\frac{1}{2} \tilde{G} \leq 0.
		\end{equation}
		It yields from Lemma 3.4 in \cite{JinLin2022} and definition that
		$$
		|f_i|_{s, r}^{2} \leq C\left(|\mathcal{K}_i f_i|_{s, r}^{2}+\epsilon^2|\mathcal{S}_i f_i|_{s, r}^{2}\right)
		$$
		and
		$$
		(f_i, f_i)_{s, r} \leq C\left(|\mathcal{K}_i f_i|_{s, r}^{2}+\epsilon^2|\mathcal{S}_i f_i|_{s, r}^{2}\right) \leq C \frac{1}{\epsilon^{2}}[f_i, f_i]_{s, r}.
		$$
		Thus
		\begin{equation}\label{ineq:EtildeG}
			\tilde{E} \leq C\left(G+\kappa\bar{\theta}\sum_{i=1}^{N}|f_i|_{s, r}^{2}\right)+\lambda_{4}\kappa\bar{\theta}\sum_{i=1}^{N}(f_i, f_i)_{s, r} \leq C\left(G+\sum_{i=1}^{N}\left(|\mathcal{K}_i f_i|_{s, r}^{2}+\epsilon^2|\mathcal{S}_i f_i|_{s, r}^{2}\right)\right) \leq C \tilde{G}.
		\end{equation}
		This together with \eqref{ineq:Etilde0} implies
		$$
		\tilde{E}(t) \leq \tilde{E}(0) e^{-\lambda t},
		$$
		where $\lambda=\frac{1}{2 C}$, $C$ being the constant in \eqref{ineq:EtildeG}.
		Finally, the proof of Theorem \ref{thm:hypocoercivity} is completed by noticing that
		$$
		E(t) \leq \tilde{E}(t) \leq \tilde{E}(0) e^{-\lambda t} \leq\left(E(0)+C^{h}\right) e^{-\lambda t}.
		$$ \qed
	\end{proof}

	\bibliography{mybibfile}

\begin{thebibliography}{27}
\expandafter\ifx\csname natexlab\endcsname\relax\def\natexlab#1{#1}\fi
\expandafter\ifx\csname url\endcsname\relax
  \def\url#1{\texttt{#1}}\fi
\expandafter\ifx\csname urlprefix\endcsname\relax\def\urlprefix{URL }\fi

\bibitem[{Baranger et~al.(2005)Baranger, Boudin, Jabin, and
  Mancini}]{Baranger2005}
Baranger, C., Boudin, L., Jabin, P.-E., Mancini, S., 2005. A modeling of
  biospray for the upper airways, in cemracs 2004—mathematics and
  applications to biology and medicine. ESAIM: Proc. 14, EDP Sci., Les Ulis,
  France, 41--47.

\bibitem[{Cober and Isaac(2006)}]{Cober2006}
Cober, S., Isaac, G., 2006. Estimating Maximum Aircraft Icing Environments
  Using a Large Database of In-Situ Observations. 44th AIAA Aerospace Sciences
  Meeting and Exhibit. AIAA 2006-266.

\bibitem[{Cohen and DeVore(2015)}]{Cohen2015}
Cohen, A., DeVore, R., 2015. {Approximation of high-dimensional parametric
  PDEs}. Acta Numerica 24, 1--159.

\bibitem[{DeVore et~al.(2013)DeVore, Petrova, and Wojtaszczyk}]{Devore2013}
DeVore, R., Petrova, G., Wojtaszczyk, P., 2013. Greedy algorithms for reduced
  bases in banach spaces. Constructive Approximation 37~(3), 455--466.

\bibitem[{Friedlander(1977)}]{Friedlander1977}
Friedlander, S.~K., 1977. Smoke, Dust and Haze: Fundamentals of Aerosol
  Behavior. Wiley-Interscience, New York.

\bibitem[{Gamba et~al.(2021)Gamba, Jin, and Liu}]{Gamba2021}
Gamba, I.~M., Jin, S., Liu, L., 2021. Error estimate of a bifidelity method for
  kinetic equations with random parameters and multiple scales. International
  Journal for Uncertainty Quantification 11~(5), 57--75.

\bibitem[{Goudon et~al.(2010)Goudon, He, Moussa, and Zhang}]{GoudonHe2010}
Goudon, T., He, L., Moussa, A., Zhang, P., 2010. The
  navier–stokes–vlasov–fokker–planck system near equilibrium. SIAM
  Journal on Mathematical Analysis 42~(5), 2177--2202.

\bibitem[{Goudon et~al.(2004{\natexlab{a}})Goudon, Jabin, and
  Vasseur}]{Goudon2004a}
Goudon, T., Jabin, P.-E., Vasseur, A., 2004{\natexlab{a}}. Hydrodynamic limit
  for the vlasov-navier-stokes equations. part i: Light particles regime.
  Indiana University mathematics journal, 1495--1515.

\bibitem[{Goudon et~al.(2004{\natexlab{b}})Goudon, Jabin, and
  Vasseur}]{Goudon2004b}
Goudon, T., Jabin, P.-E., Vasseur, A., 2004{\natexlab{b}}. Hydrodynamic limit
  for the vlasov-navier-stokes equations. part ii: Fine particles regime.
  Indiana University mathematics journal, 1517--1536.

\bibitem[{Gunzburger et~al.(2014)Gunzburger, Webster, and
  Zhang}]{Gunzburger2014}
Gunzburger, M., Webster, C., Zhang, G., 2014. Stochastic finite element methods
  for partial differential equations with random input data. Acta Numerica 23,
  521--650.

\bibitem[{Hauf and Schr\"oder(2006)}]{Hauf2006}
Hauf, T., Schr\"oder, F., 2006. Aircraft icing research flights in embedded
  convection. Meteorology and Atmospheric Physics 91~(1), 247--265.

\bibitem[{Jin and Lin(2022{\natexlab{a}})}]{JinLin2022AP}
Jin, S., Lin, Y., 2022{\natexlab{a}}. Asymptotic-preserving schemes for
  kinetic-fluid modeling of mixture flows with distinct particle sizes. arXiv
  preprint arXiv:2212.08844.

\bibitem[{Jin and Lin(2022{\natexlab{b}})}]{JinLin2022}
Jin, S., Lin, Y., 2022{\natexlab{b}}. Energy estimates and hypocoercivity
  analysis for a multi-phase navier-stokes-vlasov-fokker-planck system with
  uncertainty. arXiv preprint arXiv:2204.10573.

\bibitem[{Liu and Zhu(2020)}]{LiuZhu2020}
Liu, L., Zhu, X., 2020. A bi-fidelity method for the multiscale boltzmann
  equation with random parameters. Journal of Computational Physics 402,
  108914.

\bibitem[{Narayan et~al.(2014)Narayan, Gittelson, and Xiu}]{Narayan2014}
Narayan, A., Gittelson, C., Xiu, D., 2014. A stochastic collocation algorithm
  with multifidelity models. SIAM Journal on Scientific Computing 36~(2),
  A495--A521.

\bibitem[{Nobile et~al.(2008)Nobile, Tempone, and Webster}]{Nobile2008}
Nobile, F., Tempone, R., Webster, C.~G., 2008. A sparse grid stochastic
  collocation method for partial differential equations with random input data.
  SIAM Journal on Numerical Analysis 46~(5), 2309--2345.

\bibitem[{Peterson(1989)}]{Peterson1989}
Peterson, J.~S., 1989. The reduced basis method for incompressible viscous flow
  calculations. SIAM Journal on Scientific and Statistical Computing 10~(4),
  777--786.

\bibitem[{Potapczuk and Tsao(2019)}]{Potapczuk2019}
Potapczuk, M., Tsao, J., 2019. {The Influence of SLD Drop Size Distributions on
  Ice Accretion in the NASA Icing Research Tunnel}. SAE Technical Paper.

\bibitem[{Prosperetti and Tryggvason(2007)}]{Prosperetti2007}
Prosperetti, A., Tryggvason, G., 2007. Computational Methods for Multiphase
  Flows. Cambridge University Press, Cambridge, UK.

\bibitem[{Schwab and Todor(2006)}]{Schwab2006}
Schwab, C., Todor, R.~A., 2006. Karhunen–lo\`eve approximation of random
  fields by generalized fast multipole methods. Journal of Computational
  Physics 217, 100--122.

\bibitem[{Shu and Jin(2018)}]{ShuJin2018}
Shu, R., Jin, S., 2018. Uniform regularity in the random space and spectral
  accuracy of the stochastic galerkin method for a kinetic-fluid two-phase flow
  model with random initial inputs in the light particle regime. ESAIM:
  Mathematical Modelling and Numerical Analysis 52~(5), 1651--1678.

\bibitem[{Sullivan(2015)}]{Sullivan2015}
Sullivan, T.~J., 2015. Introduction to Uncertainty Quantification. Springer,
  Switzerland.

\bibitem[{Williams(1985)}]{Williams1985}
Williams, F.~A., 1985. Combustion Theory, 2nd ed. Benjamin Cummings, Menlo
  Park, CA.

\bibitem[{Xiu(2009)}]{Xiu2009}
Xiu, D., 2009. Fast numerical methods for stochastic computations: a review.
  Communications in Computational Physics 5, 242--272.

\bibitem[{Xiu(2010)}]{Xiu2010}
Xiu, D., 2010. Numerical Methods for Stochastic Computation. Princeton
  University Press, Princeton, New Jersey.

\bibitem[{Xiu and Hesthaven(2005)}]{Xiu2005}
Xiu, D., Hesthaven, J.~S., 2005. High-order collocation methods for
  differential equations with random inputs. SIAM Journal on Scientific
  Computing 27~(3), 1118--1139.

\bibitem[{Zhu et~al.(2014)Zhu, Narayan, and Xiu}]{Zhu2014}
Zhu, X., Narayan, A., Xiu, D., 2014. Computational aspects of stochastic
  collocation with multifidelity models. SIAM/ASA Journal on Uncertainty
  Quantification 2~(1), 444--463.

\end{thebibliography}

\end{document}